\documentclass[10pt]{amsart}
\usepackage{indentfirst,latexsym,bm}
\usepackage{amsfonts}
\usepackage{amssymb}
\usepackage{amsmath}
\usepackage{hyperref}
\usepackage{dsfont}
\usepackage{leftidx}
\usepackage{amsthm}
\usepackage{geometry}
\usepackage{amsmath,amscd}
\usepackage{tikz}
\usepackage[all,cmtip]{xy}

\usepackage{latexsym,amsmath,amssymb,amsfonts}  
\usepackage{color}
\usepackage{color,xcolor}
\usepackage{colordvi}
\usepackage{xspace}
      
\usepackage{graphicx}
\usepackage{pstricks}
\usepackage{pgf}
\usepackage{tikz}
\usepackage{xkeyval}
\usepackage{tikz}

\usetikzlibrary{mindmap,trees,shadows}

\begin{document}
 \thispagestyle{empty}

\title[A Noetherian Hopf algebra is affine iff its Hopf coradical is affine]
{A Noetherian Hopf algebra is affine iff its Hopf coradical is affine}

\author{
{Huan Jia $^{1}$}
and 
{Yinhuo Zhang $^{2}$}}

\address{$^{1}$ Department of Mathematics, Suqian University, Suqian City 223800, Jiangsu Province, China}

\address{$^{2}$ Department of Mathematics and Statistics,
University of Hasselt, Universitaire Campus, 3590 Diepenbeek, Belgium}

\address{Huan Jia $^{1}$}
\email{huan.jia@squ.edu.cn}

\address{Yinhuo Zhang $^{2}$}
\email{yinhuo.zhang@uhasselt.be}

\subjclass[2020]{16T05; 16P40; 16S15; 68R15}
\date{\today}
\maketitle

\newtheorem{theorem}{Theorem}[section]
\newtheorem{proposition}[theorem]{Proposition}
\newtheorem{lemma}[theorem]{Lemma}
\newtheorem{corollary}[theorem]{Corollary}
\theoremstyle{definition}
\newtheorem{definition}[theorem]{Definition}
\newtheorem{example}[theorem]{Example}
\newtheorem{remark}[theorem]{Remark}

\newcommand{\K}{\mathds{k}}

\newcommand{\lOr}{\langle O \rangle}
\newcommand{\X}{\langle X \rangle}
\newcommand{\Y}{\langle Y \rangle}
\newcommand{\Xo}{\langle X_{0} \rangle}
\newcommand{\Xp}{\langle X_{+} \rangle}
\newcommand{\Yo}{\langle Y_{0} \rangle}
\newcommand{\Yp}{\langle Y_{+} \rangle}

\newcommand{\id}{\operatorname{id}}
\newcommand{\im}{\operatorname{Im}}
\newcommand{\rank}{\operatorname{rank}}
\newcommand{\corad}{\operatorname{corad}}
\newcommand{\gr}{\operatorname{gr}}
\newcommand{\s}{\operatorname{s}}

\newcommand{\lex}{\operatorname{lex}}
\newcommand{\LW}{\operatorname{LW}}
\newcommand{\mlex}{\operatorname{mlex}}
\newcommand{\mrf}{\operatorname{mrf}}
\newcommand{\irr}{\operatorname{irr}}

\theoremstyle{plain}
\newcounter{maint}
\renewcommand{\themaint}{\Alph{maint}}
\newtheorem{mainthm}[maint]{Theorem}

\theoremstyle{plain}
\newtheorem*{proofthma}{Proof of Theorem A}
\newtheorem*{proofthmb}{Proof of Theorem B}

\begin{abstract}
Let $\K$ denote a field. Extending the structural frameworks established in \cite{JZ2025-2}, this paper introduces novel techniques utilizing non-commutative reduction orders, factorization theory, and the generalized lifting methodology. We establish a definitive necessary and sufficient criterion for the affineness of Noetherian Hopf algebras, thereby providing a significant advancement toward resolving the long-standing Wu--Zhang question \cite{WZ2003}. Specifically, we prove that a left or right Noetherian Hopf algebra over $\K$ is affine if and only if its Hopf coradical is affine. This characterization fundamentally concentrates the burden of verification onto the first filtration step, yielding a criterion that is structurally transparent and highly operational. 

To establish necessity, we provide an essential intrinsic result demonstrating that the Hopf coradical of an affine Hopf algebra inherits the property of being affine. Furthermore, as direct applications of this equivalence, we prove that a left or right Noetherian Hopf algebra $H$ is affine provided that its coradical $H_{(0)}$ forms a subalgebra (the dual Chevalley property), its coradical $H_{(0)}$ is cocommutative, or its Hopf coradical $H_{[0]}$ is commutative.

\medskip
\noindent \textbf{Keywords:} \textit{Hopf algebra, Hopf coradical, Noetherian, affine, dual Chevalley property.} 
\end{abstract}

\section*{Introduction}

In 2003, Q.-S. Wu and J. J. Zhang proved that every Noetherian affine polynomial identity (PI) Hopf algebra is Artin--Schelter Gorenstein, and in doing so, they formulated the classical affineness question for the broader class of Noetherian Hopf algebras (see also \cite{Br2007, BG2014, Go2013, A2023}):

\medskip
\noindent
\textbf{Question \cite{WZ2003}.}
\textit{Is every Noetherian Hopf algebra over a field $\K$ an affine $\K$-algebra?}
\medskip

Molnar \cite{Mo1975} established the validity of this question in the commutative and cocommutative regimes. Beyond these foundational settings, however, the general problem has remained famously open, even for PI Hopf algebras, which share close geometric and algebraic ties to the commutative setting (see \cite{BG2014, Go2013}). In the graded framework, Jia and Zhang \cite{JZ2025-1} recently showed that a Noetherian Hopf algebra that is graded as an algebra is affine if and only if its degree-zero homogeneous component is affine.

In the non-graded framework, Goodearl and Zhang \cite{GZ2017} proved that a locally affine Noetherian Hopf algebra over an algebraically closed field of characteristic zero is affine, provided it is faithfully flat over all its Hopf subalgebras. As a direct consequence, any Noetherian pointed Hopf algebra over such a field is affine. Subsequently, Jia and Zhang \cite{JZ2025-2} introduced the machinery of reduction orders, reduction-factorizations, and prime words, successfully establishing that every Noetherian pointed Hopf algebra over an arbitrary field is affine. In particular, their results imply that every Hopf subalgebra of a Noetherian pointed Hopf algebra inherits the property of being affine. In a subsequent survey \cite{Br2026}, K.~Brown pointed out that the original constraints requiring the base field to be algebraically closed and of characteristic zero in \cite{GZ2017} could indeed be dropped. 

However, the powerful field-independent machinery introduced in \cite{JZ2025-2} extends naturally to the much broader framework of the present paper, after appropriate technical modifications. This extension underscores the robustness and general utility of the underlying rewriting principles in non-commutative algebra and the structural analysis of infinite-dimensional Hopf algebras.

In the present paper, we investigate the Wu--Zhang affineness question in the wider setting where the Hopf coradical of a Noetherian Hopf algebra is affine. The concept of the Hopf coradical, introduced by Andruskiewitsch and Cuadra \cite{AC2013}, generalizes the celebrated lifting method of Andruskiewitsch and Schneider \cite{AS1998}, which serves as a cornerstone in the classification theory of both finite- and infinite-dimensional Hopf algebras. 

By employing mirror-reduction orderings, reduction-factorization, prime-word rewriting machinery, and the generalized lifting technique, we prove that a left or right Noetherian Hopf algebra over a field is affine if and only if its Hopf coradical is affine. A vital component in demonstrating the necessity of this condition is an intrinsic structural result showing that the Hopf coradical of an affine Hopf algebra inherits the property of being affine.

The manuscript is organized as follows. Let $(X, <_{X})$ be a well-ordered set, let $\X$ denote the free monoid generated by $X$, and let $\K\X$ be the free associative algebra on $X$.

In Section~\ref{sec:1}, we review the required background material on Hopf algebras, the generalized lifting method, and the (mirror) lexicographic order.
In Section~\ref{sec:2}, we introduce the mirror-reduction order on $\X$ along with its associated reduction-factorization theory, thereby refining and generalizing the structural techniques originally developed in \cite{JZ2025-2}.

In Section~\ref{sec:3}, we analyze the properties of irreducible words with respect to the chosen reduction order. We show that if an augmented algebra is left Noetherian, the underlying set of its irreducible letters must be finite (Lemma~\ref{lem:O_I-finite}).
In Section~\ref{sec:4}, given a Hopf algebra $H$ over $\K$ whose Hopf coradical $H_{[0]}$ is a Hopf subalgebra, we construct a generating set $X$ extending a given base set $O$ such that the coproduct $\Delta_H$ lifts to a skew-triangular comultiplication on the free algebra $\K\X$ (Definition~\ref{def:comultiplication}, Lemma~\ref{lem:X-generates-H}). By applying mirror reduction order, reduction-factorization, and prime words, we completely determine the comultiplication formulas for arbitrary words (Lemmas~\ref{lem:comultiplication-prime-words}--\ref{lem:comultiplication-words-2}).

In Section~\ref{sec:5}, we establish two fundamental reduction results governing prime words (Lemmas~\ref{lem:beta-qbeta-h-I}--\ref{lem:beta-u-I}). We then prove that every reducible letter of positive degree can be expanded as a linear combination of products consisting of strictly smaller letters (Lemma~\ref{lem:rl-sum-smaller-irrls}). As a consequence, every non-empty word admits a decomposition in terms of degree-zero letters and irreducible positive-degree letters (Theorem~\ref{thm:rl-sum-lower-irrls}). Combining this rewriting framework with Lemma~\ref{lem:O_I-finite}, we demonstrate that a left or right Noetherian Hopf algebra is affine whenever its Hopf coradical is affine (Theorem~\ref{thm:main-theorem2}). More generally, this affineness property holds if the coradical is contained in any affine Hopf subalgebra (Remark~\ref{rmk:more-result}).

In Section~\ref{sec:6}, we turn our attention to the converse direction and prove the following core result:

\medskip
\noindent
\textbf{Theorem A.} (Theorem~\ref{thm:main-theorem3})\ \ 
\textit{Let $H$ be an affine Hopf algebra over $\K$. If the Hopf coradical $H_{[0]}$ is a Hopf subalgebra, then $H_{[0]}$ is affine.}
\medskip

By consolidating Theorem~\ref{thm:main-theorem2} with Theorem~A, we obtain a new, equivalent formulation of the Wu--Zhang question that shifts the focus onto the first filtration step:

\medskip
\noindent
\textbf{Theorem B.} (Theorem~\ref{thm:main-theorem4})\ \ 
\textit{Let $H$ be a left or right Noetherian Hopf algebra over $\K$. Then the following conditions are equivalent:}
\begin{itemize}
    \item[$\mathrm{(a)}$] \textit{H is affine;}
    \item[$\mathrm{(b)}$] \textit{The Hopf coradical $H_{[0]}$ is affine;}
    \item[$\mathrm{(c)}$] \textit{The coradical $H_{(0)}$ is contained in an affine Hopf subalgebra of $H$.}
\end{itemize}
\textit{In this case, any Hopf subalgebra of $H$ containing $H_{(0)}$ is also affine.}
\medskip

Consequently, the remaining step in resolving the global affineness question is to establish the affineness of the Hopf coradical for arbitrary Noetherian Hopf algebras, or equivalently, to determine whether a Noetherian Hopf algebra that is generated as an algebra by its coradical is necessarily affine.

In Section~\ref{sec:7}, we explore the structural boundaries of this problem under further representation-theoretic conditions:

\medskip
\noindent
\textbf{Theorem C.} (Theorem~\ref{thm:when-Hopfalg-is-affine})\ \ 
\textit{Let $H$ be a left or right Noetherian Hopf algebra over $\K$. Suppose that the Hopf coradical $H_{[0]}$ is locally affine and faithfully flat over all its Hopf subalgebras. Then $H_{[0]}$ is affine, and hence $H$ is affine.}
\medskip

We note that these structural hypotheses are readily satisfied by many celebrated families of Hopf algebras (Remark~\ref{rmk:conditions-examples}). In particular, as explicit applications of this framework, we confirm the affineness of left or right Noetherian Hopf algebras that satisfy the dual Chevalley property, whose coradicals are cocommutative, or whose Hopf coradicals are commutative.

Finally, several conceptual techniques utilized in this work are inspired by the foundational frameworks in \cite{ZSL2020, Ji2023, Kh1999, Ro1999, Uf2004}, particularly concerning irreducible words defined with respect to graded lexicographic orders, and the utility of comultiplications of free algebras in Poincar\'e--Birkhoff--Witt-type constructions.

\section{Preliminaries}\label{sec:1}

Throughout, let $\mathbb{N}$ denote the set of natural numbers. A $\K$-algebra is called \textit{affine} if it is finitely generated as a $\K$-algebra.

\subsection{Hopf algebras}
\quad

We refer to \cite{Ra2012,Sw1969,Mo1993} for basic concepts concerning coalgebras and Hopf algebras.
Given a coalgebra $C$ over $\K$. $\Delta_{C}$ is the coproduct and $\epsilon_{C}$ is the counit. We use Sweedler's notation $\Delta_{C}(c) = \sum_{} c_{(1)} \otimes c_{(2)}$ for the coproduct.
The \textit{coradical} of $C$, denoted by $\corad(C)$, is the sum of all simple subcoalgebras of $C$. 
The \textit{coradical filtration} $\{C_{(n)}\}_{n=0}^{\infty}$ of $C$  is a coalgebra filtration of $C$, which is defined recursively by $C_{(0)}=\operatorname{corad}{C}$, and for $n\ge 1$, $$C_{(n)} :=\bigwedge^{n+1} C_{(0)} = C_{(n-1)} \bigwedge C_{(0)}= \Delta_{C}^{-1}(C_{(n-1)} \otimes C + C \otimes C_{(0)}).$$  
For a Hopf algebra $H$ over $\K$, the coalgebra filtration $\{H_{(n)}\}_{n\ge 0}$ of $H$ is a Hopf algebra filtration of $H$ if and only if the coradical $H_{(0)}$  is a Hopf subalgebra of $H$.

In general, a Hopf algebra $H$ need not satisfy the dual Chevalley property; that is, the coradical $H_{(0)}$ is not always a subalgebra of $H$. To study such Hopf algebras, Andruskiewitsch and Cuadra introduced the generalized lifting method \cite{AC2013}. 
The \textit{Hopf coradical} $H_{[0]}$ of $H$ is the subalgebra of $H$ generated by the coradical $H_{(0)}$.
Denote by $S_{H}$ the antipode of $H$. 
Assume $S_{H}(H_{[0]}) \subseteq H_{[0]}$ (this holds if $S_{H}|_{H_{(0)}}$ is injective).
The  \textit{standard filtration} $\{H_{[n]}\}_{n\ge 0}$ of $H$, defined recursively by $$H_{[n]}:= \bigwedge^{n+1} H_{[0]} = H_{[n-1]} \bigwedge H_{[0]},$$ is a Hopf algebra filtration of $H$. 
If the coradical $H_{(0)}$ is a Hopf subalgebra of $H$, then the standard filtration of $H$ becomes the coradical filtration of $H$. 
Denote by $$\gr_{\s} H := \bigoplus_{n\ge 0} H_{[n]}/ H_{[n-1]}$$ the associated graded Hopf algebra with respect to  $\{H_{[n]}\}_{n\ge 0}$, and set $(\gr_{\s} H)_{n}:= H_{[n]}/ H_{[n-1]}$.

For a Hopf algebra $H$ over $\K$, 
${_{H}^{H}\mathcal{YD}}$ is the category of left Yetter-Drinfel'd modules, 
whose objects are triples $(M, \cdot, \rho)$, where $(M, \cdot)$ is a left $H$-module and  $(M, \rho)$ is a left $H$-comodule (write $\rho(m) =\sum_{} m_{(-1)} \otimes m_{(0)} $), satisfying the compatibility condition:
\begin{align*}
h_{(1)} m_{(-1)} \otimes h_{(2)} \cdot m_{(2)} = (h_{(1)} \cdot m)_{(-1)} h_{(2)} \otimes (h_{(1)} \cdot m)_{(0)}, \quad h\in H,~ m\in M.
\end{align*}
Let $M, N \in {_{H}^{H}\mathcal{YD}}$. The module and the comodule of $M \otimes N$ in ${_{H}^{H}\mathcal{YD}}$ are given by: 
\begin{align*}
h \cdot (m \otimes n) &= \sum_{} h_{(1)} \cdot m \otimes h_{(2)} \cdot n, \quad h \in H, ~m\in M, ~n \in N,\\
\rho( m \otimes n ) &= \sum_{} m_{(-1)} n_{(-1)} \otimes m_{(0)} \otimes n_{(0)}, \quad m\in M, ~n \in N.
\end{align*}
The category ${_{H}^{H}\mathcal{YD}}$ is prebraided monoidal: the prebraiding $c_{M,N}: M \otimes N \rightarrow N \otimes M$ for $M,N \in {_{H}^{H}\mathcal{YD}}$ is given by 
\begin{align*}
c_{M,N} (m \otimes n) = \sum m_{(-1)} \cdot n \otimes m_{(0)}, \quad m \in M, ~n \in N. 
\end{align*}

For a Hopf algebra whose Hopf coradical is a Hopf subalgebra, the graded associated Hopf algebra with respect to the standard filtration has the Radford biproduct decomposition \cite{Ra1985,Ra2012}:
\begin{lemma}\cite{AC2013} \label{lem:gr[c]H-Radford-biproduct}
Let $H$ be a Hopf algebra over $\K$ such that $S_{H}(H_{[0]}) \subseteq H_{[0]}$. Then there exists an $\mathbb{N}$-graded connected Hopf algebra $B  = \bigoplus_{n\ge 0} B_{n}$ in ${_{H_{[0]}}^{H_{[0]}}\mathcal{YD}}$ such that as Hopf algebras,
\begin{align*}
\gr_{\s}  H \cong B \sharp H_{[0]},
\end{align*}
where $B = (\gr_{\s} H)^{co~\pi}$, $B_{n} = B \cap (\gr_{\s} H)_{n}$ and $(\gr_{\s} H)_{n} = B_{n}H_{[0]}$ for every $n\ge 0$.
\end{lemma}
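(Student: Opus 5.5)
The plan is to reduce the statement to Radford's biproduct theorem, once the right Hopf algebra projection and inclusion are in place. Put $A := H_{[0]}$ and $G := \gr_{\s} H = \bigoplus_{n\ge 0} H_{[n]}/H_{[n-1]}$, where $H_{[-1]}=0$. The steps are as follows.

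\emph{Step 1: $G$ is a graded Hopf algebra.} The hypothesis $S_H(A)\subseteq A$ makes $A$ a Hopf subalgebra, since it is already a subalgebra and a subcoalgebra. The latter holds because $A$ is generated by the subcoalgebra $H_{(0)}$, and products of subcoalgebras are subcoalgebras. Hence $\{H_{[n]}\}$ is a Hopf algebra filtration, as recalled above. In particular $H_{[m]}H_{[n]}\subseteq H_{[m+n]}$, $\Delta(H_{[n]})\subseteq \sum_{i} H_{[i]}\otimes H_{[n-i]}$ and $S(H_{[n]})\subseteq H_{[n]}$, so $G$ is an $\mathbb{N}$-graded Hopf algebra with $G_0 = H_{[0]} = A$.

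\emph{Step 2: the projection and inclusion.} Let $\pi: G\to G_0 = A$ be the projection onto degree zero and $\iota: A = G_0 \hookrightarrow G$ the inclusion. Both are graded Hopf algebra maps, because the structure maps respect the grading and degree-zero parts only receive contributions from degree zero. Clearly $\pi\iota=\id_A$.

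\emph{Step 3: apply Radford.} Radford's theorem \cite{Ra1985} gives that $B := G^{co\,\pi}=\{x\in G : (\id\otimes\pi)\Delta(x)=x\otimes 1\}$ is a Hopf algebra in ${_{A}^{A}\mathcal{YD}}$. The action is the adjoint action through $\iota$, and the coaction is $(\pi\otimes\id)\Delta$. Radford also gives that the multiplication map $B\sharp A\to G$, $b\sharp a\mapsto b\,\iota(a)$, is a Hopf algebra isomorphism.

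\emph{Step 4: grading of $B$.} Since $\pi$ and $\Delta$ are graded, the coinvariance condition is homogeneous, so $B$ is a graded subspace: $B=\bigoplus_n B_n$ with $B_n = B\cap G_n$. The map $E: G\to B$, $x\mapsto x_{(1)}\,\iota S\pi(x_{(2)})$, is the projection onto coinvariants and preserves degree. Together with the isomorphism, this gives $G_n = B_n A$. Finally $B_0 = B\cap A = \K 1$, because for $a\in A$ we have $(\id\otimes\pi)\Delta(a)=\Delta(a)$, and $\Delta(a)=a\otimes 1$ forces $a\in\K1$ by applying $\epsilon\otimes\id$. So $B$ is connected.

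The main obstacle is Step 1, specifically the compatibility of the standard filtration with the multiplication, i.e. $H_{[m]}H_{[n]}\subseteq H_{[m+n]}$. I would prove it by induction using the wedge description, as for the coradical filtration. Take $x\in H_{[m]}$ and $y\in H_{[n]}$. Then $\Delta(xy)=\Delta(x)\Delta(y)$, and we may write $\Delta(x)\in \sum_{i} H_{[i]}\otimes H_{[m-i]}$ (and similarly for $y$), which is available since $A$ is a Hopf subalgebra; see Montgomery 5.2.8, which covers the analogous statement for the coradical filtration. Multiplying these and using the induction hypothesis on lower total degree shows that $\Delta(xy)$ lies in $H_{[m+n-1]}\otimes H + H\otimes A$, hence $xy\in H_{[m+n]}$. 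The antipode statement follows because $S$ is an anti-coalgebra map preserving $A$. If needed, I would simply cite \cite{AC2013} for this filtration fact, since it is recalled in the preliminaries.
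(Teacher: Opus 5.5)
Your proposal is correct and is essentially the standard argument behind this lemma. The paper gives no proof of its own, only the citation to Andruskiewitsch--Cuadra, and that argument runs exactly as yours: the standard filtration is a Hopf algebra filtration, the degree-zero projection $\gr_{\s} H \to H_{[0]}$ splits the inclusion, and Radford's projection theorem plus homogeneity of the coinvariance condition give the graded connected $B$ with $(\gr_{\s} H)_n = B_n H_{[0]}$. The one step you compress is the antipode: $S$ turns $\Delta(x)\in H_{[n-1]}\otimes H + H\otimes H_{[0]}$ into $\Delta(S(x)) \in H_{[0]}\otimes H + H\otimes H_{[n-1]}$, so you also need $H_{[0]}\wedge H_{[n-1]} = H_{[n]}$, which follows from the (standard) associativity of the wedge product.
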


The antipode of a Hopf algebra is often injective, and in many case even bijective. 
For any Noetherian Hopf algebra, Skryabin proved the injectivity of the antipode, and conjectured that it is bijective.
\begin{lemma}\cite{Sk2006}\label{lem:Sk2006}
The antipode of a right or left Noetherian Hopf algebra is injective.
\end{lemma}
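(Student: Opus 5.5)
This is a cited result (Skryabin). If $H$ is right or left Noetherian, then $S_H$ is injective. I would reproduce the standard argument rather than rely on the citation.

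The plan is to reduce to the ascending chain condition on a chain of one-sided ideals attached to the powers of $S=S_H$. Assume $H$ is left Noetherian; the right Noetherian case follows by passing to $H^{op}$ with antipode $S$ when $S$ is bijective, or by running the symmetric argument directly. Put $K_n=\ker S^n$. These kernels form an ascending chain $K_1\subseteq K_2\subseteq\cdots$ of subspaces, and each $K_n$ is a coideal, because $S^n$ is a coalgebra map or anti-map (anti-map when $n$ is odd).

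The key step is to produce left ideals from these coideals. I would set $L_n=HK_n$, or better, take the left ideal generated by $K_n\cap\ker\epsilon$. The aim is to show two things: the chain $L_n$ detects $K_n$, and stabilization of $L_n$ forces $K_n=K_{n+1}$. To recover $K_n$ from $L_n$, I would use that the coinvariants of the quotient $H$-module coalgebra $H/HK_n^+$ are controlled by $K_n$. Concretely, apply $S^n$, which is an algebra anti-map sending $HK_n$ to $S^n(K_n)S^n(H)=0$. This gives $S^n(L_n)=0$, and therefore $L_n\subseteq K_n$. Since $K_n\subseteq L_n$ trivially, $K_n=L_n$ is a left ideal. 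By the Noetherian hypothesis the chain stabilizes, so $K_N=K_{N+1}=\cdots$.

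Now suppose $x\in\ker S$. Then $x\in K_1\subseteq K_N$, and I want to show $x\in\ker S^{N}\cap S^N(H)=0$ via a Fitting-type argument. Stabilization gives $\ker S^N\cap \operatorname{Im}S^N=0$ in the usual way: if $S^N(y)\in\ker S^N$, then $y\in\ker S^{2N}=\ker S^N$, so $S^N(y)=0$. It then remains to show $\ker S\subseteq\operatorname{Im}S^N$, and this is the main obstacle, since $S$ need not be surjective.

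I would attack it with the antipode identity. For $x\in\ker S$ we have $x=\sum x_{(1)}\epsilon(x_{(2)})=\sum x_{(1)}S(x_{(2)})x_{(3)}$. Because $\ker S$ is a coideal, $\Delta$ places a $\ker S$ factor in each term, and I would exploit this to show that the Hopf ideal generated by $K_N$ is killed. More precisely, the largest Hopf ideal $I\subseteq K_N$ is a two-sided ideal, and $H/I$ is a Hopf algebra. Applying $\epsilon$-coinvariant arguments in $H/I$, I would show $I$ contains $\ker S$ and then that $I=0$ using $\sum x_{(1)}S(x_{(2)})=\epsilon(x)$. This is exactly Skryabin's faithful-flatness/projectivity argument over $H/I$, and I expect it to be the delicate point of the proof.
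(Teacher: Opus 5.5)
\textbf{There is a genuine gap: the one step that carries the content of Skryabin's theorem is left unproved.} The paper gives no proof of this lemma (it only cites Skryabin), so your argument has to stand on its own.

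What you do prove is correct but elementary. Each $K_n=\ker S^{n}$ is a Hopf ideal:
\begin{itemize}
\item it is a two-sided ideal, because $S^{n}$ is an algebra endomorphism or anti-endomorphism, so the detour through $L_n=HK_n$ is unnecessary;
\item it is a coideal;
\item it is $S$-stable.
\end{itemize}
Hence under either one-sided Noetherian hypothesis the chain stabilizes at some $N$. No reduction to $H^{\mathrm{op}}$ is needed, and that reduction would be circular anyway, since $H^{\mathrm{op}}$ is a Hopf algebra only when $S$ is bijective. The Fitting argument then correctly gives $\ker S^{N}\cap\operatorname{Im}S^{N}=0$.

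However, all of this uses only that $S$ is a multiplicative (anti-)endomorphism with $\ker S^{N}=\ker S^{N+1}$, and that information alone cannot yield injectivity. For example, the Hopf algebra endomorphism $\phi$ of $\K[A\times B]$ induced by $(a,b)\mapsto(a,1)$ satisfies $\phi^{2}=\phi$, hence $\ker\phi=\ker\phi^{2}$, yet $\ker\phi\neq 0$ whenever $B\neq 1$. So everything rests on the step you postpone.

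That step is not supplied, for four reasons.
\begin{itemize}
\item The target $\ker S\subseteq\operatorname{Im}S^{N}$ is not a reduction. Given $\ker S^{N}\cap\operatorname{Im}S^{N}=0$, it is equivalent to $\ker S=0$.
\item In your last paragraph, the largest Hopf ideal contained in $K_N$ is $K_N$ itself. So ``$I\supseteq\ker S$'' is automatic, and ``$I=0$'' is the theorem.
\item Using the antipode identity elementwise gives nothing. For $x\in J=\ker S$, the coideal property only yields $x=\sum x_{(1)}S(x_{(2)})x_{(3)}\in JH+HJ$, which is vacuous because $J$ is already an ideal.
\item Appealing to ``Skryabin's faithful-flatness/projectivity argument'' amounts to citing the result you set out to prove.
\end{itemize}

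What is missing is an input that uses $\sum h_{(1)}S(h_{(2)})=\epsilon(h)$ globally rather than term by term. One possible direction: $S$ factors through $\pi\colon H\to H/\ker S$. So any $h$ with $\sum h_{(1)}\otimes\pi(h_{(2)})=h\otimes\pi(1)$ satisfies $h=\sum h_{(1)}S(h_{(2)})=\epsilon(h)1$, i.e.\ these coinvariants are just $\K 1$. Consequently $\ker S=0$ would follow from a Takeuchi-type statement recovering the Hopf ideal $\ker S$ as the left ideal generated by the augmentation part of its coinvariants. Such a statement would follow, for instance, from faithful coflatness of $H$ over $H/\ker S$. It is not automatic, and proving it (or some substitute) is exactly where substantive work and the hypotheses on $H$ must enter. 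Your proposal contains no argument for it.
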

Evidently, the Hopf coradical of a Noetherian Hopf algebra is a Hopf subalgebra.

\subsection{Mirror lexicographic order on words}
\quad

For a set $X$, $\X$ (resp. $\X^{+}$, $\K\X$) is the free monoid (resp. free semigroup, free algebra) generated by $X$. 
The empty word in $\X$ is denoted by $1$. 
Clearly, $\X = \X^{+} \cup \{1\}$, and $\K\X^{+}$ is the subspace of $\K\X$ spanned by $\X^{+}$. Write $X^{*} := X \cup \{1\}$.

A word $\beta\in \X$ is called a \textit{factor} of $\alpha\in \X$ if there exist $\gamma,\eta\in \X$ such that $\alpha=\gamma\beta\eta$. $\beta$ is called a \textit{prefix} (resp. \textit{suffix}) of $\alpha$ if $\gamma=1$ (resp. $\eta=1$). If $\beta\neq \alpha$, then $\beta$ is called a \textit{proper} factor of $\alpha$.

For a word $\alpha= x_{1} \cdots x_{n} \in \X$ with letters $x_{1},\ldots,x_{n}\in X$,  define
\begin{itemize}
\item  $\alpha(y)$ as the number of occurrences of a letter $y\in X$ in $\alpha$, that is, \\
 $\alpha(y):=\# \{i \mid x_{i}=y,~1\le i\le n\}$;
\item  $r(\alpha)$ as the last letter of  $\alpha$, i.e.  $r(\alpha)=x_{n}$; 
\item  $|\alpha|$ as the \textit{length} of  $\alpha$, i.e. $|\alpha|=n$. 
\end{itemize}
For each $m\ge 1$, write $X^{m}:= \{\alpha \in \X \mid |\alpha| = m\}$.

Given a well-ordered set $(X, <_{X})$. The \textit{lexicographic order} $<_{\lex_{X}}$ on $\X$ is defined as follows (see \cite{Lo1997}). For words $\alpha,\beta\in \X$, 
\begin{align*}
\alpha <_{\lex_{X}} \beta \Longleftrightarrow 
\left\{\begin{array}{l}
\alpha \text { is a proper prefix of } \beta, \text { or } \\
\alpha=\gamma x \eta, ~\beta=\gamma y \zeta \text { with } x, y \in X, ~x<_{X} y \text{ and } \gamma, \eta, \zeta \in\langle X\rangle.
\end{array}\right.
\end{align*}
Define a \textit{mirror lexicographic order} $<_{\mlex_{X}}$ on $\X$ as follows. For words $\alpha,\beta\in \X$, 
\begin{align*}
\alpha <_{\mlex_{X}} \beta \Longleftrightarrow 
\left\{\begin{array}{l}
\alpha \text { is a proper suffix of } \beta, \text { or } \\
\alpha =\eta x \gamma, ~ \beta=  \zeta y \gamma \text { with } x, y \in X, ~x <_{X} y \text{ and } \gamma, \eta, \zeta \in\X.
\end{array}\right.
\end{align*}

Note that the (mirror) lexicographic order is a total order on $\X$ but not a well order.

\begin{example}
Let $X=\{x,y\}$ with $x <_{X} y$. By definition, we have
\begin{align*}
& x <_{\lex_{X}} x^{2} <_{\lex_{X}} x^{3}  <_{\lex_{X}} xy  <_{\lex_{X}} y <_{\lex_{X}} yx <_{\lex_{X}} y^{2}; \\
& x <_{\mlex_{X}} x^{2}  <_{\mlex_{X}} x^{3} <_{\mlex_{X}} yx  <_{\mlex_{X}} y <_{\mlex_{X}} xy <_{\mlex_{X}} y^{2}.
\end{align*}
\end{example}

\section{Mirror Reduction Order, Reduction Factorization, and Prime Words}\label{sec:2}

In this section, our analysis builds directly on the techniques, results, and proof arguments presented in \cite{JZ2025-2}, which we further modify here. Therefore, we omit restating those proofs unless specifically necessary for clarity.

Let $O$ be a set equipped with a map $t_{O}: O \rightarrow \mathbb{N}$. For each $m \ge 0$, we denote the following subsets:
\begin{align*}
O_{m} &:= \{y \in O \mid t_{O}(y) = m\}, \qquad O_{+} := \{y \in O \mid t_{O}(y) \ge 1\}.
\end{align*}
Recall that $1$ denotes the empty word. For each $y \in O_{+}$, we adopt the convention $y_{1} = y$, and define the following pairwise disjoint sets of formal symbols:
\begin{align*}
C_{y} &:= \{y_{a} \mid a \in O^*_{0} \}, \qquad X := O_{0} \cup \left(\bigcup_{y\in O_{+}} C_{y}\right).
\end{align*}
Evidently, $O \subseteq X$. We define a map $C: X \rightarrow O$ by 
\begin{align*}
C(a) = a, \quad C(y_{b}) = C(y) = y, \qquad \text{for } a,b \in O_{0}, ~ y \in O_{+}.
\end{align*}
We refer to $O$ as the \textit{original set} of $X$, and call $y$ the \textit{original element} of $y_{b}$. The map $C$ naturally lifts to a monoid map $C:\X \rightarrow \lOr$.

Next, we define a map $t_{X}: X \rightarrow \mathbb{N}$ by 
\begin{align*}
t_{X}(a) = t_{O}(a), \quad t_{X}(y_{b}) = t_{X}(y) = t_{O}(y), \qquad \text{for } a,b \in O_{0}, ~ y \in O_{+}. 
\end{align*}
For each $m \ge 0$, we denote:
\begin{align*}
X_{m} := \{x \in X \mid t_{X}(x) = m\}, \qquad X_{+} := \{x \in X \mid t_{X}(x) \ge 1\}.
\end{align*}
It follows that $X_{0} = O_{0}$, $O_{+} \subseteq \bigcup_{y\in O_{+}} C_{y} \subseteq X_{+}$, and $\lOr, \Xo, \Xp \subseteq \X$.

Let $(O, <_{O})$ be a well-ordered set. For each $z \in O_{+}$, we define an order $<_{C_{z}}$ on $C_{z}$ as follows. For any $x, y \in C_{z}$,
\begin{align*}
x <_{C_{z}} y 
\Longleftrightarrow 
\left\{\begin{array}{l}
 x = z \text{ and } y = z_{a} \text{ for some } a \in O_{0}, \text{ or } 
 \\
 x = z_{b} \text{ and } y = z_{c} \text{ for some } b,c \in O_{0} \text{ with } b <_{O} c.
\end{array}\right.
\end{align*}
It is straightforward to verify that $(C_{z}, <_{C_{z}})$ is well-ordered.

Using this, we define an order $<_{X}$ on $X$ as follows. For any $x, y \in X$,
\begin{align*}
x <_{X} y 
\Longleftrightarrow 
\left\{\begin{array}{l}
 C(x) <_{O} C(y), \text{ or } 
 \\
 C(x) = C(y) = z \in O_{+} \text{ and } x <_{C_{z}} y.
\end{array}\right.
\end{align*}
Consequently, $<_{X}$ is a total order on $X$.

We introduce the following reduction orders on words.
\begin{definition}\label{def:reduction-order}
The (mirror) \textit{reduction order} $<_{\lOr}$ on $\lOr$ is defined as follows: for any non-empty words $\alpha, \beta \in \lOr$, we set $1 <_{\lOr} \alpha$, and 
\begin{align*}
\alpha <_{\lOr} \beta \Longleftrightarrow 
& \left\{\begin{array}{l}
r(\alpha) <_{O} r(\beta), \text{ or } \\
r(\alpha) = r(\beta) \text{ and } 
\left\{\begin{array}{l}
|\alpha| < |\beta|, \text{ or } \\
|\alpha| = |\beta| \text{ and } \alpha <_{\mlex_O} \beta.
\end{array}\right.
\end{array}\right.
\end{align*}
The (mirror) \textit{reduction order} $<_{\X}$ on $\X$ is defined as follows: for any words $\alpha, \beta \in \X$, 
\begin{align*}
\alpha <_{\X} \beta \Longleftrightarrow 
& \left\{\begin{array}{l}
C(\alpha) <_{\lOr} C(\beta), \text{ or } \\
C(\alpha) = C(\beta) \text{ and } \alpha <_{\mlex_{X}} \beta.
\end{array}\right.
\end{align*}
\end{definition}

\begin{example}\label{eg:reduction-order}
Let $O = \{a, x, y\}$ with $O_{0} = \{a\}$, $O_{+} = \{x,y\}$, ordered as $a <_{O} x <_{O} y$. 
By definition, we have $C_{x} = \{x, x_{a}\}$, $C_{y} = \{y, y_{a}\}$, and $X = \{a, x, x_{a}, y, y_{a}\}$, where
$$a <_{X} x <_{X} x_{a} <_{X} y <_{X} y_{a}.$$
Moreover, the induced order yields:
\begin{align*}
x <_{\X} x_{a} <_{\X} x^{2} <_{\X} x_{a}x <_{\X} xx_{a} <_{\X} x_{a}^{2}.
\end{align*}
\end{example}

We show that the following constructed sets preserve the well-ordering property.
\begin{lemma}\label{lem:well-orders}
The ordered sets $(X, <_{X})$, $(\lOr, <_{\lOr})$, and $(\X, <_{\X})$ are well-ordered.
\end{lemma}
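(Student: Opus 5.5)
We prove the three claims in turn. The common tool is that a total order is a well-order iff every non-empty subset has a least element, equivalently iff there is no infinite strictly decreasing sequence. Throughout we use that $(O,<_{O})$ and each $(C_{z},<_{C_{z}})$ are well-ordered.

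\emph{Step 1: $(X,<_{X})$.} Let $\emptyset\neq S\subseteq X$. The set $C(S)\subseteq O$ has a least element $z$. If $z\in O_{0}$, then $C^{-1}(z)=\{z\}$, so $z$ is the least element of $S$. If $z\in O_{+}$, then $S\cap C^{-1}(z)=S\cap C_{z}$ is non-empty and has a least element for $<_{C_{z}}$. This element is the least element of $S$, since $<_{X}$ first compares the values of $C$.

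\emph{Step 2: $(\lOr,<_{\lOr})$.} The empty word is the minimum, so it suffices to find a least element of any non-empty set $S$ of non-empty words.
\begin{itemize}
\item First take the least last letter $r$ occurring among the words of $S$, which exists because $O$ is well-ordered.
\item Among the words of $S$ ending in $r$, take the minimal length $n$.
\item It remains to find a least element of the set $T$ of words in $S$ of length $n$ ending in $r$, with respect to $<_{\mlex_{O}}$.
\end{itemize}
For words of equal length, the proper-suffix clause of $<_{\mlex_{O}}$ never applies. So $<_{\mlex_{O}}$ restricted to $O^{n}$ is the lexicographic order comparing letters from right to left. This is the product order of $n$ copies of a well-order, taken in reverse coordinate order, and hence is a well-order. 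Concretely, minimize the last letter, then the second-to-last letter among the survivors, and so on for $n$ steps. This gives the least element of $T$, which is then the least element of $S$.

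\emph{Step 3: $(\X,<_{\X})$.} Let $\emptyset\neq S\subseteq\X$. By Step 2, $C(S)$ has a least element $w$. Put $T=S\cap C^{-1}(w)$. All words in $T$ have the common length $|w|$, because $C$ preserves length. By the same observation as in Step 2, $<_{\mlex_{X}}$ on $X^{|w|}$ is the right-to-left lexicographic order built from $<_{X}$. Since $<_{X}$ is a well-order by Step 1, $T$ has a least element. That element is the least element of $S$.

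The only point needing care is the suffix clause in the mirror lexicographic order: it makes $<_{\mlex}$ fail to be a well-order on all words. The argument avoids this by first restricting to words of a fixed length, which the length comparison in $<_{\lOr}$ and the length-preserving map $C$ guarantee.
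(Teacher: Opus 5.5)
Your proof is correct and follows the paper's argument essentially step for step. For $X$ you minimize $C$ and then the fiber in $C_z$; for $\lOr$ you minimize the last letter, then the length, and then use that $<_{\mlex_O}$ is a well-order on $O^m$; and for $\X$ you minimize $C(\alpha)$ and then use the well-order $<_{\mlex_X}$ on $X^{|\gamma|}$. You are also slightly more careful than the paper, which leaves implicit both the empty word and the reason $<_{\mlex}$ well-orders words of a fixed length.
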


\begin{proof}
Let $S$ be a non-empty subset of $X$. Note that $\emptyset \neq C(S) \subseteq O$ and $(O, <_{O})$ is well-ordered. Thus, $C(S)$ possesses a unique least element with respect to $<_{O}$, which we denote by $a$. If $a \in O_{0}$, then $a$ is the least element of $S$ with respect to $<_{X}$. Alternatively, assume that $a \in O_{+}$, and define the fiber 
\begin{align*}
S_{a} := \{s \in S \mid C(s) = a\}.
\end{align*} 
Since $\emptyset \neq S_{a} \subseteq C_{a}$ and $(C_{a}, <_{C_{a}})$ is well-ordered, $S_{a}$ contains a least element with respect to $<_{C_{a}}$, say $b$. By definition, $b$ constitutes the least element of $S$ with respect to $<_{X}$. Therefore, $(X, <_{X})$ is well-ordered.

Next, let $T$ be a non-empty subset of $\lOr$, and define the set of rightmost letters:
\begin{align*}
T_{r} := \{r(\alpha) \mid \alpha \in T\}.
\end{align*}
Given that $\emptyset \neq T_{r} \subseteq O$ and $(O, <_{O})$ is well-ordered, $T_{r}$ has a least element with respect to $<_{O}$, say $c$. Let 
\begin{align*}
T_{r,c} := \{\alpha \in T \mid r(\alpha) = c\}.
\end{align*}
Among the words in $T_{r,c}$, choose those with minimal length $m$, and set 
\begin{align*}
T_{r,c,m} := \{\alpha \in T \mid r(\alpha) = c, ~|\alpha| = m\}.
\end{align*} 
Observe that $\emptyset \neq T_{r,c,m} \subseteq O^{m}$ and $(O^{m}, <_{\mlex_{O}})$ is well-ordered. Thus, $T_{r,c,m}$ has a unique least element with respect to $<_{\mlex_{O}}$, which can be expressed as $\beta c$ for some word $\beta$. By definition, $\beta c$ is the least element of $T$ with respect to $<_{\lOr}$, and hence $(\lOr, <_{\lOr})$ is well-ordered.

Finally, let $U$ be a non-empty subset of $\X$. Since $\emptyset \neq C(U) \subseteq \lOr$ and $(\lOr, <_{\lOr})$ is well-ordered, $C(U)$ has a least element with respect to $<_{\lOr}$, say $\gamma$. Let 
\begin{align*}
U_{\gamma} := \{\alpha \in \X \mid C(\alpha) = \gamma\}.
\end{align*}
Note that $\emptyset \neq U_{\gamma} \subseteq X^{|\gamma|}$. Since $(X, <_{X})$ is well-ordered, the Cartesian power $(X^{|\gamma|}, <_{\mlex_{X}})$ is also well-ordered. Then $U_{\gamma}$ contains a unique least element with respect to $<_{\mlex_{X}}$, say $\eta$. By definition, $\eta$ is the least element of $U$ with respect to $<_{\X}$. Therefore, $(\X, <_{\X})$ is well-ordered.
\end{proof}

The following lemma establishes that these defined orders are mutually compatible.

\begin{lemma}\label{lem:compatibility-orders}
The following assertions hold:
\begin{itemize}
\item [(a)] $<_{\lOr}$ is compatible with $<_{O}$ on $O$; that is, for any letters $x, y \in O$, $x <_{\lOr} y$ if and only if $x <_{O} y$.
\item [(b)] $<_{X}$ is compatible with $<_{O}$ on $O$; that is, for any letters $x, y \in O$, $x <_{X} y$ if and only if $x <_{O} y$.
\item [(c)] $<_{\X}$ is compatible with $<_{X}$ on $X$; that is, for any letters $x, y \in X$, $x <_{\X} y$ if and only if $x <_{X} y$.
\item [(d)] $<_{\X}$ is compatible with $<_{\lOr}$ on $\lOr$; that is, for any words $\alpha, \beta \in \lOr$, $\alpha <_{\X} \beta$ if and only if $\alpha <_{\lOr} \beta$.
\end{itemize}
\end{lemma}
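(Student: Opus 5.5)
Each of the four assertions can be checked directly from Definition~\ref{def:reduction-order} and the definition of $<_{X}$. The plan is to treat them in the order (a), (b), (c), (d), since (d) reuses (b).

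For (a), I take letters $x,y\in O$, viewed as words of length one. Then $r(x)=x$ and $r(y)=y$. If $x\neq y$, the first clause of $<_{\lOr}$ already decides the comparison: $x<_{\lOr} y$ exactly when $r(x)<_{O} r(y)$, i.e.\ when $x<_{O} y$. When $x=y$, neither relation holds, because both orders are strict. This gives (a).

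For (b), I note that $C(x)=x$ for every $x\in O$. This holds for $x\in O_{0}$ by definition, and for $x\in O_{+}$ because $x=x_{1}$ and $C(x_{1})=x$. So for distinct $x,y\in O$ we have $C(x)\neq C(y)$, and the first clause of $<_{X}$ gives $x<_{X} y$ iff $x<_{O} y$. Note that two distinct elements of $O$ never lie in the same fiber $C_{z}$, since the only element of $C_{z}\cap O$ is $z$ itself.

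For (c), I take distinct letters $x,y\in X$, so $C(x),C(y)\in O$ are words of length one. There are two cases.
\begin{itemize}
\item If $C(x)\neq C(y)$, then by (a) we get $C(x)<_{\lOr}C(y)$ iff $C(x)<_{O}C(y)$. By the definitions of $<_{\X}$ and $<_{X}$, this is equivalent to both $x<_{\X}y$ and $x<_{X}y$.
\item If $C(x)=C(y)=z$, then $z\in O_{+}$, because fibers over $O_{0}$ are singletons. Here $x<_{\X}y$ iff $x<_{\mlex_{X}}y$. For single letters this is just $x<_{X}y$, since neither letter is a proper suffix of the other and the second clause with $\gamma=\eta=\zeta=1$ compares the letters directly.
\end{itemize}

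For (d), I take $\alpha,\beta\in\lOr$. Since $C$ is a monoid map fixing each letter of $O$, we have $C(\alpha)=\alpha$ and $C(\beta)=\beta$. If $\alpha\neq\beta$, the first clause of $<_{\X}$ applies and gives $\alpha<_{\X}\beta$ iff $\alpha<_{\lOr}\beta$. The second clause of $<_{\X}$ would require $C(\alpha)=C(\beta)$, which forces $\alpha=\beta$, so it never intervenes. The empty word needs a brief check: $C(1)=1$ and $1<_{\lOr}\alpha$, so $1$ is least in both orders.

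There is no real obstacle here. The only points needing care are the conventions $y_{1}=y$ and $C(y)=y$ for $y\in O_{+}$, and the fact that $<_{\mlex_{X}}$ restricted to letters coincides with $<_{X}$.
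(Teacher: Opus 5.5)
Your proposal is correct: each of (a)--(d) follows by unwinding the definitions of $<_{O}$, $<_{X}$, $<_{\lOr}$ and $<_{\X}$, using that $C$ fixes $O$ pointwise and that $<_{\mlex_{X}}$ restricted to single letters is just $<_{X}$. The paper states this lemma without proof as a routine verification, and your case-by-case check, including the same-fiber case in (c), is exactly the argument it leaves implicit.
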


Applying Lemma~\ref{lem:compatibility-orders} to words yields the following immediate consequence:

\begin{corollary}\label{cor:well-orders-compatibility-2}
Let $\alpha$ be a non-empty word in $\X$ and let $y$ be a letter in $O$. Then the following conditions are equivalent: 
\begin{itemize}
    \item [(a)] $\alpha <_{\X} y$;
    \item [(b)] $r(\alpha) <_{X} y$;
    \item [(c)] $r(C(\alpha)) = C(r(\alpha)) <_{O} y$;
    \item [(d)] $C(\alpha) <_{\lOr} y$.
\end{itemize}
\end{corollary}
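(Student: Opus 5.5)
The plan is to establish the cycle (a)$\Leftrightarrow$(b)$\Leftrightarrow$(c)$\Leftrightarrow$(d). The key tool is the definition of $<_{\X}$: it first compares $C(\alpha)$ and $C(y)=y$ in $<_{\lOr}$, and then, only when these coincide, compares $\alpha$ and $y$ in $<_{\mlex_X}$. Note that $y\in O$, so $C(y)=y$ and $y$ is a word of length one in both $\lOr$ and $\X$. Observe also that $C$ preserves last letters, since $C$ is a monoid map applied letterwise: $r(C(\alpha))=C(r(\alpha))$. This gives the equality inside (c) immediately.

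\textbf{(c)$\Leftrightarrow$(d).} Unfold Definition~\ref{def:reduction-order} with $\beta=y$. Since $\alpha$ is non-empty, $C(\alpha)\neq 1$. Then $C(\alpha)<_{\lOr} y$ holds iff either $r(C(\alpha))<_O y$, or $r(C(\alpha))=y$ together with a length or $\mlex$ condition. In the second case we would need $|C(\alpha)|<1$, which is impossible, or $|C(\alpha)|=1$, i.e. $C(\alpha)=y$, and then $y<_{\mlex_O}y$, which is also impossible. Hence (d) is equivalent to $r(C(\alpha))<_O y$, which is (c).

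\textbf{(a)$\Leftrightarrow$(d).} By definition, $\alpha<_{\X}y$ iff $C(\alpha)<_{\lOr}y$, or $C(\alpha)=y$ and $\alpha<_{\mlex_X}y$. The second alternative must be excluded. If $C(\alpha)=y$, then $\alpha$ is a single letter $x$ with $C(x)=y\in O_+$ (or $x=y$ if $y\in O_0$). In that case $x\in C_y$, and $y$ is the least element of $C_y$ under $<_{C_y}$, so $x\ge_X y$. Since $\mlex$ on single letters is just $<_X$, we get $\alpha\not<_{\mlex_X} y$. Thus (a)$\Leftrightarrow$(d).

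\textbf{(b)$\Leftrightarrow$(c).} By the definition of $<_X$, $r(\alpha)<_X y$ iff $C(r(\alpha))<_O C(y)=y$, or $C(r(\alpha))=y\in O_+$ with $r(\alpha)<_{C_y} y$. The latter is impossible, again because $y$ is the minimum of $C_y$. So (b) reduces to (c).

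The only delicate point is this repeated observation that $y$ is the least element of its fiber $C_y$. It is exactly what kills the tie-breaking clauses, and it follows from the convention $y_1=y$ together with the definition of $<_{C_y}$. Alternatively, parts (a)–(d) of Lemma~\ref{lem:compatibility-orders} could be cited for the letter-level comparisons.
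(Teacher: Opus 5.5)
Your proof is correct and is essentially the paper's argument: the paper records this corollary as an immediate consequence of Lemma~\ref{lem:compatibility-orders}, and that lemma itself amounts to unfolding Definition~\ref{def:reduction-order} and the definition of $<_X$ exactly as you do. Your observation that $y$ is the least element of its fiber $C_y$, so the tie-breaking clauses never apply, is the detail the paper's ``immediate'' passes over; it is needed because Lemma~\ref{lem:compatibility-orders} only compares elements of $O$ or $\lOr$, whereas here $\alpha\in\X$ and $r(\alpha)\in X$ are arbitrary.
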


The following lemma demonstrates that the reduction orders $<_{\lOr}$ and $<_{\X}$ possess the property of left-compatibility:

\begin{lemma}\label{lem:ac<bc}
Let $\alpha, \beta$ be non-empty words in $\lOr$ $($resp. $\X$$)$ satisfying $\alpha <_{\lOr} \beta$ $($resp. $\alpha <_{\X} \beta$$)$. Then $\gamma\alpha <_{\lOr} \gamma\beta$ for any word $\gamma$ in $\lOr$ $($resp. $\X$$)$.
\end{lemma}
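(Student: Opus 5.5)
The plan is to unwind the definitions directly. The key observation is that prepending a word $\gamma$ on the left changes neither the last letter nor the relative comparison data that the orders use. All comparisons are read off from the right end of the word, so a common left factor is invisible except through length, and length shifts equally.

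\emph{Case of $\lOr$.} Let $\alpha,\beta\in\lOr$ be non-empty with $\alpha<_{\lOr}\beta$, and let $\gamma\in\lOr$. Since $\alpha$ and $\beta$ are non-empty, we have $r(\gamma\alpha)=r(\alpha)$ and $r(\gamma\beta)=r(\beta)$, and also $|\gamma\alpha|=|\gamma|+|\alpha|$, $|\gamma\beta|=|\gamma|+|\beta|$. We go through the three clauses of Definition~\ref{def:reduction-order} in turn.
\begin{itemize}
\item If $r(\alpha)<_O r(\beta)$, the same inequality holds for the last letters of $\gamma\alpha$ and $\gamma\beta$.
\item If the last letters agree and $|\alpha|<|\beta|$, then the lengths of $\gamma\alpha$ and $\gamma\beta$ still differ by the same amount.
\item If the last letters and lengths agree and $\alpha<_{\mlex_O}\beta$, then, since $|\alpha|=|\beta|$, neither word is a proper suffix of the other. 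So $\alpha=\eta x\delta$ and $\beta=\zeta y\delta$ with $x<_O y$. Hence $\gamma\alpha=(\gamma\eta)x\delta$ and $\gamma\beta=(\gamma\zeta)y\delta$, which gives $\gamma\alpha<_{\mlex_O}\gamma\beta$.
\end{itemize}

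\emph{Case of $\X$.} Let $\alpha<_{\X}\beta$ be non-empty. Since $C$ is a monoid map, $C(\gamma\alpha)=C(\gamma)C(\alpha)$.
\begin{itemize}
\item If $C(\alpha)<_{\lOr}C(\beta)$, both words are non-empty, so the $\lOr$ case gives $C(\gamma)C(\alpha)<_{\lOr}C(\gamma)C(\beta)$.
\item If $C(\alpha)=C(\beta)$, then $C(\gamma\alpha)=C(\gamma\beta)$, and $|\alpha|=|\beta|$ because $C$ preserves length. The mirror-lex comparison then reduces, exactly as above, to a differing letter $x<_X y$ followed by a common suffix $\delta$. Prepending $\gamma$ preserves this, so $\gamma\alpha<_{\mlex_X}\gamma\beta$.
\end{itemize}

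The only point needing care, and hence the main (mild) obstacle, is ruling out the "proper suffix" clause of the mirror-lex order in the equal-length situations. This is where the hypothesis of non-emptiness and the equality of lengths are used. Without them, left multiplication could break the suffix relation: for instance, $\alpha=1$ is a suffix of everything, but $\gamma$ need not be below $\gamma\beta$ in the same way. Everything else is immediate from the definitions.
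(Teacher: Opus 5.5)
Your proof is correct. Prepending $\gamma$ leaves the last letter unchanged, because $\alpha$ and $\beta$ are non-empty, and it shifts both lengths by $|\gamma|$. In the equal-length case the mirror-lexicographic inequality must come from a differing letter followed by a common suffix, which survives left multiplication, and the $\X$ case follows because $C$ is a length-preserving monoid map. The paper states this lemma without proof, deferring to the authors' earlier work, and your clause-by-clause check of the definition is the routine argument it implicitly relies on.
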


\begin{remark}
In general, the reduction order is not right-compatible. That is, given $\alpha <_{\X} \beta$ (resp. $\alpha <_{\lOr} \beta$), the inequality $\alpha\gamma <_{\X} \beta\gamma$ (resp. $\alpha\gamma <_{\lOr} \beta\gamma$) does not necessarily hold for an arbitrary word $\gamma \in \X$ (resp. $\gamma \in \lOr$). 
For instance, Example~\ref{eg:reduction-order} illustrates that $x^{2} <_{\X} y$. However, we have $x^{2} y_{a} >_{\X} yy_{a}$ because $C(x^{2}y_{a}) = x^{2}y >_{\lOr} y^{2} = C(yy_{a})$.
\end{remark}

By virtue of Lemma~\ref{lem:compatibility-orders}, we obtain the following minimality property for letters in $O_{+}$.

\begin{lemma}\label{lem:x-least}
Let $x \in O_{+}$ and define $R_{x} := \{\alpha \in \X \mid C(r(\alpha)) = x\}$. Then $x$ is the unique least element of the set $R_{x}$ with respect to $<_{\X}$. Moreover, for a letter $\ell \in X_{+}$, the following conditions are equivalent:
\begin{itemize}
\item [(a)] $\ell$ is the least non-empty word of $\Xp$ with respect to $<_{\X}$;
\item [(b)] $\ell$ is the least non-empty word of $\langle O_{+} \rangle$ with respect to $<_{\lOr}$; 
\item [(c)] $\ell$ is the least letter of $X_{+}$ with respect to $<_{X}$; 
\item [(d)] $\ell$ is the least letter of $O_{+}$ with respect to $<_{O}$.
\end{itemize}
\end{lemma}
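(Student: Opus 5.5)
First I would show that $x$ is the least element of $R_{x}$. Take $\alpha \in R_{x}$ with $\alpha \neq x$. Then $C(\alpha)$ is a non-empty word in $\lOr$ and $r(C(\alpha)) = C(r(\alpha)) = x$. The letter $x$ itself lies in $R_{x}$, since $C(x)=x$.

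I compare $\alpha$ with $x$ through the definition of $<_{\X}$, which first compares $C(\alpha)$ with $C(x)=x$ in $<_{\lOr}$. Both words end in the same letter $x$.
\begin{itemize}
\item If $|\alpha| \ge 2$, then $|C(\alpha)| > 1 = |x|$, so $x <_{\lOr} C(\alpha)$ and hence $x <_{\X} \alpha$.
\item If $|\alpha| = 1$, then $\alpha$ is a letter with $C(\alpha)=x$, so $\alpha \in C_{x}$ and $\alpha = x_{a}$ for some $a\in O_{0}$. Here $C(\alpha)=C(x)$, so the comparison passes to $<_{\mlex_{X}}$. By the definition of $<_{C_{x}}$ we have $x <_{C_{x}} x_{a}$, hence $x <_{X} x_{a}$, and Lemma~\ref{lem:compatibility-orders}(c) gives $x <_{\X} x_{a}$.
\end{itemize}
Uniqueness of the least element is automatic because $<_{\X}$ is a total order.

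For the equivalences, the plan is to show that all four conditions pick out the same letter, namely $\ell_{0}$, the $<_{O}$-least element of $O_{+}$ (it exists because $O$ is well-ordered). Each condition characterizes a unique element, so it suffices to show that each minimum equals $\ell_{0}$.

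\begin{itemize}
\item \textbf{(d)$\Leftrightarrow$(c).} Let $\ell \in X_{+}$. Then $C(\ell) \in O_{+}$, so $\ell_{0} \le_{O} C(\ell)$. If $C(\ell) = \ell_{0}$, then either $\ell=\ell_{0}$ or $\ell=(\ell_{0})_{a}$, and in the latter case $\ell_{0} <_{C_{\ell_{0}}} \ell$. Either way $\ell_{0} \le_{X} \ell$, so $\ell_{0}$ is the least letter of $X_{+}$.
\item \textbf{(c)$\Leftrightarrow$(a).} Let $\alpha \in \Xp$ be non-empty. Then $r(\alpha) \in X_{+}$, so $C(r(\alpha)) \in O_{+}$.
  \begin{itemize}
  \item If $C(r(\alpha)) >_{O} \ell_{0}$, then $\ell_{0} <_{\X} \alpha$ by Corollary~\ref{cor:well-orders-compatibility-2}, used in its contrapositive form together with totality.
  \item If $C(r(\alpha)) = \ell_{0}$, then $\alpha \in R_{\ell_{0}}$, and the first part gives $\ell_{0} \le_{\X} \alpha$.
  \end{itemize}
  Hence $\ell_{0}$ is the least non-empty word of $\Xp$. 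Since $\ell_{0}$ is also the least letter of $X_{+}$, (a) and (c) single out the same letter.
\item \textbf{(b)$\Leftrightarrow$(d).} This is the same argument as the previous item, run inside $\lOr$ with $<_{\lOr}$. For a non-empty $\beta \in \langle O_{+}\rangle$, either $r(\beta) >_{O} \ell_{0}$, or $r(\beta)=\ell_{0}$ and $|\beta| \ge 1$. In both cases $\ell_{0} \le_{\lOr} \beta$, with Lemma~\ref{lem:compatibility-orders}(a) handling letters.
\end{itemize}

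The only delicate point is the case $C(r(\alpha)) >_{O} \ell_{0}$. Corollary~\ref{cor:well-orders-compatibility-2} is stated for $\alpha <_{\X} y$, so I need its contrapositive. If $\alpha <_{\X} \ell_{0}$, then $C(r(\alpha)) <_{O} \ell_{0}$, a contradiction. Moreover $\alpha \neq \ell_{0}$, since the two have different last original letters, so totality gives $\ell_{0} <_{\X} \alpha$.
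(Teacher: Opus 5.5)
Your proof is correct and takes essentially the paper's route: the paper gives no separate argument beyond attributing the lemma to Lemma~\ref{lem:compatibility-orders}, and your case analysis (last original letter, then length, then the order on $C_{x}$) spells out the details that citation leaves implicit. One small simplification: the case $C(r(\alpha)) >_{O} \ell_{0}$ follows directly from the first clause of $<_{\lOr}$ applied to $C(\ell_{0})=\ell_{0}$ and $C(\alpha)$, so you do not need the contrapositive of Corollary~\ref{cor:well-orders-compatibility-2}.
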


We now introduce the canonical factorization of a word with respect to the reduction order $<_{\X}$.

\begin{definition}
Let $\alpha$ be a non-empty word in $\X$. The (mirror) \textit{reduction-factorization} of $\alpha$ (or \textit{mr-factorization} for short), denoted by $\mathrm{mrf}(\alpha) = (\alpha_{L}, \alpha_{R})$, is the unique decomposition 
$$\alpha = \alpha_{L}\alpha_{R}$$ 
where $\alpha_{L}$ is the greatest prefix of $\alpha$ with respect to the reduction order $<_{\X}$. 

A non-empty word $\alpha \in \X$ is said to be \textit{prime} if its decomposition satisfies $\alpha_{L} = \alpha$ (or equivalently, $\alpha_{R}=1$). We denote the set of all prime words in $\X$ by $\X_{p}$.  
\end{definition}

By definition, every letter $x \in X$ is trivially a prime word since $x_{L} = x$ and $x_{R} = 1$.

\begin{example}\label{eg1:xR}
Let $O = \{x,y,z\}$ with $O_{0} = \emptyset$ and $x <_{O} y <_{O} z$. Then we have $C_{x}=\{x\}$, $C_{y}=\{y\}$, $C_{z}=\{z\}$, and $X=\{x,y,z\}$, ordered as $x <_{X} y <_{X} z$. Observe that 
$$(x^{3})_{L} = x^{3}, \quad (yx)_{L} = y, \quad \text{and} \quad (xy)_{L} = xy.$$
This indicates that the mr-factorization of a word is intrinsically related to the greatest letter it contains with respect to $<_{X}$.
Moreover, note that $yxx >_{\X} zx$, whereas $(yxx)_{L} = y <_{\X} z = (zx)_{L}$. This demonstrates that a word with a higher reduction order may possess a prefix with a lower reduction order. 
\end{example}

\begin{example}
Recall from Example~\ref{eg:reduction-order} that
\begin{align*}
C(y_{a}x) = yx <_{\lOr} y = C(y_{a}) <_{\lOr} yxy = C(y_{a}xy).
\end{align*} 
Hence, it follows that 
\begin{align*}
y_{a}x <_{\X} y_{a} <_{\X} y_{a}xy, \quad \text{and} \quad (y_{a}xy)_{L} = y_{a}xy.
\end{align*} 
This behavior contrasts with Example~\ref{eg1:xR}, as $y$ is not the greatest letter of $y_{a}xy$ with respect to $<_{X}$.
\end{example}

For any word $\alpha \in \X$, let $m_{\alpha}$ denote the greatest letter occurring in $\alpha$ with respect to $<_{X}$. Similarly, for a polynomial $p = \sum_{i} k_{i}\alpha_{i} \in \K\X$ with $\alpha_{i} \in \X$, we denote by $m_{p}$ the greatest letter across all support words $\alpha_{i}$.

As illustrated by the preceding examples, the mr-factorization of a word depends closely on the original element of its greatest letter. In general, we establish the following properties:

\begin{lemma}\label{lem:properties-alpha_R}
Let $\alpha$ be a non-empty word in $\X$, and let $\mathrm{mrf}(\alpha)=(\alpha_{L},\alpha_{R})$. Set $m:=C(m_{\alpha})$. Then the following assertions hold:
\begin{itemize}
    \item [(a)] $C(C(\alpha)) = C(\alpha)$, $C(r(\alpha)) = r(C(\alpha))$, and $C(m_{\alpha}) = m_{C(\alpha)}$.
    \item [(b)] $C(\alpha_{R})(m) = 0$ and $r(C(\alpha_{L})) = m$. 
    \item [(c)] $r(C(\alpha)) = m$ if and only if $\alpha_{L} = \alpha$.
    \item [(d)] A decomposition $\alpha = \alpha_{1}\alpha_{2}$ constitutes the mr-factorization of $\alpha$ if and only if $r(C(\alpha_{1})) = m$ and $C(\alpha_{2})(m) = 0$.
    \item [(e)] The prefix $\alpha_{L}$ is its own greatest factor with respect to the reduction order $<_{\X}$. Consequently, $\alpha_{L}$ is the greatest factor of $\alpha$ with respect to $<_{\X}$.
\end{itemize}
\end{lemma}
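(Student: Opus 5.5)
We argue directly from the definitions of $<_{\X}$ and $<_{\lOr}$, reducing everything to the behaviour of the last letter of $C(\cdot)$. Part (a) is bookkeeping. $C$ is the identity on $O$, so $C\circ C=C$ on words. $C$ is a monoid map acting letterwise, so it commutes with taking the last letter. For $m_{\alpha}$, note that $x<_{X}y$ implies $C(x)\le_{O}C(y)$ by the definition of $<_{X}$, and that $<_{X}$ restricted to $O$ agrees with $<_{O}$ by Lemma~\ref{lem:compatibility-orders}(b). Hence $C$ is order-preserving (weakly) from $(X,<_X)$ to $(O,<_O)$, and therefore sends the greatest letter of $\alpha$ to the greatest letter of $C(\alpha)$.

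The key step is (d), from which (b) and (c) follow. Write $m=C(m_{\alpha})$. Any prefix $\beta$ of $\alpha$ has $r(C(\beta))\le_{O}m$ by (a).

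Suppose $\alpha=\alpha_{1}\alpha_{2}$ with $r(C(\alpha_{1}))=m$ and $C(\alpha_{2})(m)=0$. Let $\beta$ be any other prefix, and compare it with $\alpha_{1}$ in $<_{\X}$, which first compares $C(\cdot)$ in $<_{\lOr}$.
\begin{itemize}
\item If $\beta$ is a proper prefix of $\alpha_1$, then either $r(C(\beta))<_O m$, or $r(C(\beta))=m$ with $|\beta|<|\alpha_1|$. In both cases $\beta<_{\X}\alpha_1$.
\item If $\beta$ is longer than $\alpha_1$, its last letter lies in $\alpha_2$, so $r(C(\beta))\ne m$. Thus $r(C(\beta))<_O m$ and $\beta<_{\X}\alpha_1$.
\end{itemize}
So $\alpha_1$ is the greatest prefix, and the decomposition is the mr-factorization.

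Conversely, a decomposition of this form always exists: take $\alpha_1$ to end at the last letter of $\alpha$ whose image under $C$ is $m$. Since the greatest prefix is unique, the converse direction of (d) follows.

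Then (b) is a restatement of (d). For (c): $\alpha_R=1$ holds iff the final position of $\alpha$ is the last occurrence of $m$, i.e.\ iff $r(C(\alpha))=m$.

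For (e), let $\gamma$ be any factor of $\alpha_{L}$, so $\alpha_L=\delta\gamma\eta$, and we compare $C(\gamma)$ with $C(\alpha_{L})$ in $<_{\lOr}$.
\begin{itemize}
\item If $r(C(\gamma))<_{O}m$, then $\gamma<_{\X}\alpha_L$.
\item If $r(C(\gamma))=m$ and $\eta\ne1$, then $\eta$ contains no letter with image $m$ beyond the final one. The last letter of $\alpha_L$ has image $m$ and would lie in $\eta$, which forces $\eta$ to end in an $m$-letter. So the lengths are comparable via $\gamma\eta$, and we need to check that $|\gamma|<|\alpha_L|$ still gives $\gamma<_{\X}\alpha_L$. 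This holds since the last letters agree.
\item If $\gamma$ is a suffix of $\alpha_L$ ($\eta=1$) and $r(C(\gamma))=m$, then $|\gamma|\le|\alpha_L|$ with equality only when $\gamma=\alpha_L$.
\end{itemize}
In every case $\gamma\le_{\X}\alpha_L$.

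Finally, let $\gamma$ be an arbitrary factor of $\alpha$. If $r(C(\gamma))<_O m$, then $\gamma<_{\X}\alpha_L$. Otherwise the last letter of $\gamma$ is an $m$-letter, so $\gamma$ ends inside $\alpha_L$, because $\alpha_R$ contains no $m$-letters. Then $\gamma$ is a factor of $\alpha_L$, and the previous paragraph gives $\gamma\le_{\X}\alpha_L$.

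The main care point is the tie-breaking in $<_{\X}$. When $C(\gamma)=C(\alpha_L)$ as words, the comparison is decided by the mirror-lexicographic order on $X$. That case forces $|\gamma|=|\alpha_L|$, hence $\gamma=\alpha_L$, so it never occurs for proper factors, and the length comparison in $<_{\lOr}$ decides everything.
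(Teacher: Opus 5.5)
Your proof is correct. The paper gives no proof of this lemma, since Section~2 explicitly defers such proofs to the authors' earlier work, so there is no competing route to compare against. Your direct verification is the natural argument. The point is that every comparison is decided by the last letter of $C(\cdot)$ and then by length, so the $<_{\mlex_{X}}$ tie-break never fires among prefixes of $\alpha$ or among factors of $\alpha_{L}$. Organizing everything through (d), with $\alpha_{L}$ ending at the last letter whose $C$-image is $m$, is the right key step.

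Two small clean-ups:

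\begin{enumerate}
\item The empty prefix (in (d)) and the empty factor (in (e)) should be handled separately, using $1<_{\lOr}$ every non-empty word; otherwise $r(C(\cdot))$ is undefined there.
\item In the middle bullet of (e), the sentence ``$\eta$ contains no letter with image $m$ beyond the final one'' is false in general. For example, take $\alpha_{L}=yyy$ with $y\in O$ and $\gamma$ the first $y$; then $\eta=yy$.
\end{enumerate}

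That sentence is also unnecessary, and bullets two and three collapse into one observation. Any factor $\gamma\neq\alpha_{L}$ of $\alpha_{L}$ with $r(C(\gamma))=m=r(C(\alpha_{L}))$ satisfies $|\gamma|<|\alpha_{L}|$. Hence $C(\gamma)<_{\lOr}C(\alpha_{L})$, and so $\gamma<_{\X}\alpha_{L}$.
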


\begin{lemma}\label{lem:C(a_L)=C(a)_L}
Let $\alpha, \beta$ be non-empty words in $\X$. Then the following assertions hold:
\begin{itemize}
\item [(a)] $C(\alpha_{L}) = C(\alpha)_{L}$, which implies $\mathrm{mrf}(C(\alpha)) = (C(\alpha_{L}), C(\alpha_{R}))$.
\item [(b)] If $C(\alpha) = C(\beta)$, then $C(\alpha_{L}) = C(\beta_{L})$.
\item [(c)] The prefix $C(\alpha_{L})$ is its own greatest factor with respect to $<_{\lOr}$. Moreover, $C(\alpha_{L})$ is the greatest factor of $C(\alpha)$ with respect to $<_{\lOr}$.
\end{itemize}
\end{lemma}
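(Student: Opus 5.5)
The plan is to reduce all three assertions to the corresponding facts on $\lOr$ via the monoid map $C$, using Lemma~\ref{lem:properties-alpha_R}.

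\emph{Step 1: the key identity.} Put $m:=C(m_\alpha)$. By Lemma~\ref{lem:properties-alpha_R}(a) we have $m=m_{C(\alpha)}$, and $C$ is a monoid map, so
\[
C(\alpha)=C(\alpha_L)\,C(\alpha_R).
\]
By Lemma~\ref{lem:properties-alpha_R}(b), $r(C(\alpha_L))=m$ and $C(\alpha_R)(m)=0$. Now apply Lemma~\ref{lem:properties-alpha_R}(d) to the word $C(\alpha)$, which is legitimate since $\lOr\subseteq\X$. Its greatest letter has original element $C(m_{C(\alpha)})=m$, and $C(C(\alpha_L))=C(\alpha_L)$ by part (a). Hence the decomposition above satisfies the criterion in (d), so it is the mr-factorization of $C(\alpha)$. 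This gives (a).

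There is one small point to check. The mr-factorization of $C(\alpha)$ inside $\X$ must agree with the one computed in $\lOr$. This holds because Lemma~\ref{lem:compatibility-orders}(d) says $<_{\X}$ restricted to $\lOr$ is $<_{\lOr}$, and every prefix of a word in $\lOr$ again lies in $\lOr$.

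\emph{Step 2: part (b).} If $C(\alpha)=C(\beta)$, then (a) gives $C(\alpha_L)=C(\alpha)_L=C(\beta)_L=C(\beta_L)$.

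\emph{Step 3: part (c).} Apply Lemma~\ref{lem:properties-alpha_R}(e) to the word $C(\alpha)\in\X$. Then $C(\alpha)_L=C(\alpha_L)$ is its own greatest factor, and is the greatest factor of $C(\alpha)$, with respect to $<_{\X}$. All these factors lie in $\lOr$, so Lemma~\ref{lem:compatibility-orders}(d) converts these statements into the same ones for $<_{\lOr}$.

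\emph{Main obstacle.} The only delicate step is the compatibility check in Step 1, namely that the greatest-prefix notions in $\lOr$ and in $\X$ coincide for words in $\lOr$. This is settled by Lemma~\ref{lem:compatibility-orders}(d) together with closure of $\lOr$ under taking prefixes and factors.
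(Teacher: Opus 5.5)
Your proof is correct. The paper states this lemma without proof, deferring to the authors' earlier work, but it is clearly meant as an immediate consequence of Lemma~\ref{lem:properties-alpha_R}. Your derivation follows exactly that route: you apply criterion (d) and then (e) of that lemma to $C(\alpha)\in\lOr\subseteq\X$, and use Lemma~\ref{lem:compatibility-orders}(d) to identify $<_{\X}$ with $<_{\lOr}$ on $\lOr$. The second condition in (d) also needs $C(C(\alpha_R))=C(\alpha_R)$, which is the same idempotence you already cite for $\alpha_L$.
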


By applying Lemma~\ref{lem:properties-alpha_R}, we deduce the following structural property for prime words:

\begin{proposition}\label{prop:prime-words}
Let $w_{1}, w_{2} \in \X_{p}$ such that $w_{1} \le_{\X} w_{2}$. Then their product $w = w_{1}w_{2}$ belongs to $\X_{p}$ and satisfies $w_{1} \le_{\X} w_{2} <_{\X} w$. 
\end{proposition}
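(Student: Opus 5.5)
The plan is to reduce primality of $w=w_{1}w_{2}$ to the criterion in Lemma~\ref{lem:properties-alpha_R}(c): a non-empty word $\alpha$ is prime exactly when $r(C(\alpha)) = C(m_{\alpha})$, i.e. the last letter of $C(\alpha)$ is the original element of its greatest letter. Write $m_{i} := C(m_{w_{i}})$. Since $w_{i}$ is prime, we have $r(C(w_{i})) = m_{i}$ by Lemma~\ref{lem:properties-alpha_R}(c). The greatest letter of $w$ is the larger of $m_{w_{1}}$ and $m_{w_{2}}$, so $C(m_{w}) = \max_{<_{O}}(m_{1}, m_{2})$. This uses the fact that $C$ is order-preserving in the weak sense, which is immediate from the definition of $<_{X}$ together with Lemma~\ref{lem:properties-alpha_R}(a).

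The first key step is to compare $m_{1}$ and $m_{2}$ using $w_{1} \le_{\X} w_{2}$. By definition of $<_{\X}$, we have $C(w_{1}) \le_{\lOr} C(w_{2})$, and the first criterion of $<_{\lOr}$ compares last letters. This gives $r(C(w_{1})) \le_{O} r(C(w_{2}))$, that is, $m_{1} \le_{O} m_{2}$. Hence $C(m_{w}) = m_{2} = r(C(w_{2})) = r(C(w))$, and Lemma~\ref{lem:properties-alpha_R}(c) yields $w_{L} = w$, so $w \in \X_{p}$.

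The second step is the strict inequality $w_{2} <_{\X} w$. Here $w_{2}$ is a proper suffix of $w$, because $w_{1}$ is non-empty. We also have $r(C(w)) = r(C(w_{2}))$ and $|C(w)| > |C(w_{2})|$, so the length clause of Definition~\ref{def:reduction-order} gives $C(w_{2}) <_{\lOr} C(w)$. The definition of $<_{\X}$ then gives $w_{2} <_{\X} w$. Alternatively, one can invoke Lemma~\ref{lem:properties-alpha_R}(e): $w = w_{L}$ is its own greatest factor, so $w_{2} \le_{\X} w$, and strictness follows from the length comparison.

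The only delicate point is the first step: extracting $m_{1} \le_{O} m_{2}$ from $w_{1} \le_{\X} w_{2}$ when the comparison is decided at a deeper level of the order. This does not break the argument. If $C(w_{1}) = C(w_{2})$, or if the last letters agree and the comparison is decided by length or by $<_{\mlex}$, the last letters are still equal, so $m_{1} = m_{2}$ and the inequality still holds. I would spell out this case split explicitly, since the rest is bookkeeping.
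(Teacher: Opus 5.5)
Your proposal is correct and follows the route the paper intends. The paper omits the proof and only says the proposition follows from Lemma~\ref{lem:properties-alpha_R}; your argument supplies that deduction by using part~(c) to characterize primality by $r(C(\alpha)) = C(m_{\alpha})$, reading off $r(C(w_{1})) \le_{O} r(C(w_{2}))$ from the last-letter clause of $<_{\lOr}$ (the cases $C(w_{1}) = C(w_{2})$ or a comparison decided by length or $<_{\mlex_{O}}$ force equal last letters, as you note), and obtaining $w_{2} <_{\X} w$ from the length clause.
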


Consequently, every non-empty word admits a unique canonical factorization into prime words:

\begin{proposition}\label{prop:unique-factorization}
Every non-empty word $\omega \in \X$ can be uniquely decomposed into a strictly decreasing product of prime words with respect to the reduction order $<_{\X}$; that is,
\begin{align*}
\omega = \omega_{1}\omega_{2}\cdots \omega_{n},
\end{align*} 
where $\omega_{1},\ldots,\omega_{n} \in \X_{p}$, and $\omega_{1} >_{\X} \ldots >_{\X} \omega_{n}$. 
\end{proposition}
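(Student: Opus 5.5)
Existence will go by strong induction on the length $|\omega|$, using the mr-factorization, and uniqueness will go by comparing first factors. If $\omega$ is prime we take $n=1$. Otherwise write $\mathrm{mrf}(\omega)=(\omega_{L},\omega_{R})$ with $\omega_{R}\neq 1$. First we check that $\omega_{L}$ is prime. By Lemma~\ref{lem:properties-alpha_R}(e), $\omega_{L}$ is its own greatest factor, in particular its own greatest prefix, so $(\omega_{L})_{L}=\omega_{L}$. Since $|\omega_{R}|<|\omega|$, induction gives $\omega_{R}=\omega_{2}\cdots\omega_{n}$ with primes $\omega_{2}>_{\X}\cdots>_{\X}\omega_{n}$.

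It then remains to show $\omega_{L}>_{\X}\omega_{2}$. Each $\omega_{i}$ is a factor of $\omega$, so $\omega_{L}\ge_{\X}\omega_{2}$ by Lemma~\ref{lem:properties-alpha_R}(e). For strictness, set $m=C(m_{\omega})$. By Lemma~\ref{lem:properties-alpha_R}(b) we have $r(C(\omega_{L}))=m$ and $C(\omega_{R})(m)=0$. Hence $r(C(\omega_{2}))$ is a letter of $C(\omega_{R})$ distinct from $m$. It is $\le_{O} m$, since $m$ is the greatest original letter, so it is $<_{O} m$. The definition of $<_{\lOr}$ then gives $C(\omega_{2})<_{\lOr}C(\omega_{L})$, and therefore $\omega_{2}<_{\X}\omega_{L}$.

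For uniqueness, suppose $\omega=\omega_{1}\cdots\omega_{n}$ is any strictly decreasing product of primes. We will show $\omega_{1}=\omega_{L}$ and then apply induction to $\omega_{2}\cdots\omega_{n}$. By Lemma~\ref{lem:properties-alpha_R}(d), it suffices to show two things:
\begin{itemize}
\item[(i)] $r(C(\omega_{1}))=m$;
\item[(ii)] $C(\omega_{2}\cdots\omega_{n})(m)=0$.
\end{itemize}
For each prime $\omega_{i}$, Lemma~\ref{lem:properties-alpha_R}(c) gives $r(C(\omega_{i}))=m_{i}:=C(m_{\omega_{i}})$. The comparison $\omega_{i}>_{\X}\omega_{i+1}$ compares $C$-words first, and in $<_{\lOr}$ the last letters are compared first. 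So $m_{1}\ge_{O}m_{2}\ge_{O}\cdots\ge_{O}m_{n}$, and hence $m_{1}=m$.

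The main obstacle is (ii): we must rule out the case where $m$ also occurs in some later factor, i.e. $m_{2}=m$. Here I would apply Proposition~\ref{prop:prime-words} iteratively. If $m_{1}=m_{2}=m$, then $\omega_{2}$ satisfies $r(C(\omega_{2}))=m$, so the prefix $\omega_{1}\omega_{2}$ has last original letter $m$. By Lemma~\ref{lem:properties-alpha_R}(b),(d), the greatest prefix $\omega_{L}$ is the longest prefix ending in an $m$-letter. So in the mr-factorization the prefix $\omega_{1}$ would be strictly beaten by $\omega_{1}\omega_{2}$, because a longer length with the same last letter is larger. I would then try to derive a contradiction with primality and the decreasing condition. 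Directly, let $j$ be maximal with $m_{j}=m$, so that $\omega_{L}=\omega_{1}\cdots\omega_{j}$. The decreasing chain $\omega_{1}>_{\X}\cdots>_{\X}\omega_{j}$ holds with all the $C$-words ending in $m$. Existence applied to $\omega_{L}$ shows it is prime, while the given factorization of $\omega_{L}$ is a nontrivial decreasing one. One then needs a lemma that a prime word admits only the trivial decreasing factorization. I would prove this by induction on length, comparing $\omega_{1}$ with the last factor via Lemma~\ref{lem:ac<bc} and the mlex tie-break among words with the same $C$-length behaviour. This is where the real technical work lies.
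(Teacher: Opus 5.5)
You have a genuine gap, and it cannot be closed. Your existence argument is fine: $\omega_{L}$ is prime by Lemma~\ref{lem:properties-alpha_R}(e), and $\omega_{2}<_{\X}\omega_{L}$ follows, as you say, from $r(C(\omega_{2}))<_{O}m=r(C(\omega_{L}))$. The uniqueness half, however, rests on a lemma that is false. In fact uniqueness fails as literally stated, with strict decrease only with respect to $<_{\X}$.

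By Lemma~\ref{lem:properties-alpha_R}(c), a word is prime exactly when its last letter has the greatest original. In the setting of Example~\ref{eg1:xR}, the words $x^{3}$, $x^{2}$, $x$ are therefore all prime. Moreover $x^{2}>_{\X}x$, since $C(x^{2})=x^{2}$ and $C(x)=x$ have the same last letter and $|x^{2}|>|x|$. Thus $x^{3}=(x^{3})=(x^{2})(x)$ gives two distinct strictly decreasing prime factorizations; similarly $xxyy=(xxyy)=(xxy)(y)$.

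This is exactly your case $m_{1}=m_{2}=m$, and it yields no contradiction. Primes with the same last original letter are compared first by length, so $\omega_{1}>_{\X}\omega_{2}$ merely forces $|\omega_{1}|\ge|\omega_{2}|$ (with a mirror-lex tie-break when the lengths are equal). Iterating Proposition~\ref{prop:prime-words} cannot help, since it requires $w_{1}\le_{\X}w_{2}$, the reverse of your hypothesis. Your proposed lemma that ``a prime word has only the trivial decreasing factorization'' is refuted by $x^{3}$ itself.

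What is true is uniqueness for the factorization with strictly decreasing greatest originals,
$C(m_{\omega_{1}})>_{O}C(m_{\omega_{2}})>_{O}\cdots>_{O}C(m_{\omega_{n}})$. Equivalently, since the $\omega_{i}$ are prime, the last original letters $r(C(\omega_{i}))$ strictly decrease. Your existence proof produces exactly this factorization if you carry that information through the induction.

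Under this hypothesis your step (ii) is immediate. Every letter of $\omega_{j}$, $j\ge 2$, has original $\le_{O}C(m_{\omega_{j}})<_{O}C(m_{\omega_{1}})=m$. So Lemma~\ref{lem:properties-alpha_R}(d) gives $\omega_{1}=\omega_{L}$, and induction on length finishes.

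Equivalently, this is the unique coarsest prime factorization. A prime factor can never run past a position whose letter has original strictly larger than every original to its right, so every prime factorization refines the iterated mr-factorization. That is what the minimal-length remark after the statement reflects. You should prove the statement in this corrected form; the version with strict decrease in $<_{\X}$ alone admits no uniqueness proof.
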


By virtue of Proposition~\ref{prop:prime-words}, the unique factorization presented in Proposition~\ref{prop:unique-factorization} achieves the minimal length among all possible decompositions of a non-empty word into a product of prime words.

\section{Irreducible Words and Letters}\label{sec:3}

In this section, we investigate the properties of irreducible words with respect to the reduction order $<_{\X}$. Throughout this section, we fix a well-ordered set $(O, <_{O})$ equipped with a map $t_{O}: O \rightarrow \mathbb{N}$. Under this setting, the set $X = O_{0} \cup \left(\bigcup_{y \in O_{+}} C_{y}\right)$ is well-ordered with respect to $<_{X}$, and consequently, the free monoid $(\X, <_{\X})$ is well-ordered.

The \textit{leading word} of a non-zero polynomial $f \in \K\X$, denoted by $\LW(f)$, is defined as the greatest word appearing in the support of $f$ with respect to the reduction order $<_{\X}$. 

Let $I$ be an ideal of $\K\X$. A word $\alpha \in \X$ is said to be \textit{$I$-reducible} if $\LW(f) = \alpha$ for some polynomial $f \in I$. Conversely, a word in $\X$ is called \textit{$I$-irreducible} if it is not $I$-reducible. 

We denote the corresponding sets of irreducible elements as follows:
\begin{align*}
X_{I} &:= \{x \in X \mid x \text{ is $I$-irreducible} \}, \quad \qquad O_{I} := X_{I} \cap O, \\
X_{I,+} &:= \{x \in X_{+} \mid x \text{ is $I$-irreducible} \}, \qquad O_{I,+} := X_{I,+} \cap O, \\
\X_{I} &:= \{\omega \in \X \mid \omega \text{ is $I$-irreducible} \}.
\end{align*}

\begin{example}
Let $I$ be a proper ideal of $\K\X$. Assume that $a \in O_{0}$, $x \in O_{+}$, and $a <_{O} x$. Suppose further that there exist elements $a_{1}, a_{2} \in O_{0}^{*}$ and scalars $k_{a_1,a_2} \in \K$ such that  
\begin{align*}
ax \in \sum k_{a_1,a_2} x_{a_{1}} a_{2} + I.  
\end{align*}
By applying the order properties, we observe that
\begin{align*}
C(x_{a_{1}} a_{2}) \le_{\lOr} C((x_{a_{1}} a_{2})_{L}) = C(x_{a_{1}}) = C(x) <_{\lOr} C(ax).  
\end{align*}
This implies that $x_{a_{1}} a_{2} <_{\X} ax$, from which it follows that $\LW(f) = ax$ for some $f \in I$. Hence, the word $ax$ is $I$-reducible.
\end{example}

It is easily verified via a standard reduction argument that any word can be expressed modulo $I$ as a linear combination of irreducible words. This leads to the following foundational basis property:

\begin{proposition}\label{prop:irrws-form-basis}
Let $I$ be an ideal of $\K\X$. The residue classes of the $I$-irreducible words, given by $\{\omega + I \mid \omega \in \X_{I} \}$, form a $\K$-basis of the quotient algebra $\K\X/ I$.
\end{proposition}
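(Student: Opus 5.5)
The argument splits into spanning and linear independence. Both parts use only two facts: $<_{\X}$ is a well-order on $\X$ (Lemma~\ref{lem:well-orders}), and $\LW$ is compatible with linear combinations. Throughout, write $\pi:\K\X\to\K\X/I$ for the quotient map.

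\emph{Spanning.} Suppose some word $\alpha$ has $\pi(\alpha)$ outside the span $V$ of $\{\omega+I\mid\omega\in\X_{I}\}$. Since $(\X,<_{\X})$ is well-ordered, I can take $\alpha$ to be the $<_{\X}$-least such word. Then $\alpha$ cannot be $I$-irreducible, so there is $f\in I$ with $\LW(f)=\alpha$. After scaling, $f=\alpha-\sum_i k_i\beta_i$ with every $\beta_i<_{\X}\alpha$. By minimality, each $\pi(\beta_i)$ lies in $V$. Hence $\pi(\alpha)=\sum_i k_i\pi(\beta_i)\in V$, which is a contradiction. So every word, and therefore all of $\K\X/I$, lies in $V$.

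\emph{Independence.} Suppose $\sum_j c_j\omega_j\in I$, where the $\omega_j$ are distinct $I$-irreducible words and not all $c_j$ are zero. Discard the terms with zero coefficient. The polynomial $g=\sum_j c_j\omega_j$ is then nonzero. Its leading word is the $<_{\X}$-greatest of the remaining $\omega_j$, since distinct words cannot cancel. Because $g\in I$, that word is $I$-reducible, contradicting its choice. Hence the classes $\omega+I$ with $\omega\in\X_{I}$ are linearly independent.

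The main potential obstacle is termination of the reduction in the spanning step. It is handled entirely by the well-ordering from Lemma~\ref{lem:well-orders}, which permits transfinite/minimal-counterexample induction; no multiplicativity of the order is needed. This matters because $<_{\X}$ is only left-compatible (Lemma~\ref{lem:ac<bc}). Note also that $\LW$ is well defined for every nonzero polynomial, because $<_{\X}$ is a total order and each support is finite.
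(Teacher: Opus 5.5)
Your proof is correct and takes essentially the paper's approach. The paper only asserts that the result follows from ``a standard reduction argument'' using the well-ordering of $(\langle X\rangle, <_{\langle X\rangle})$, and your minimal-counterexample argument for spanning plus the leading-word argument for linear independence is that argument written out in full (your remark that no multiplicativity of the order is needed is also right, since $I$-reducibility is defined directly through leading words of elements of $I$).
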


Let $\K\X$ be the augmented free algebra equipped with the canonical augmentation map $\epsilon: \K\X \rightarrow \K$, where $\epsilon(X) = 0$. We obtain the following structural consequence regarding subwords:

\begin{proposition}\label{prop:prefix-irrws}
Let $(\K\X, \epsilon)$ be the augmented free algebra on $X$, and let $I$ be a proper ideal of $\K\X$ such that $I \subseteq \ker \epsilon$. Then every suffix of an $I$-irreducible word in $\X$ is also $I$-irreducible.
\end{proposition}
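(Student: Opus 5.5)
The plan is to argue by contraposition: if some suffix of a word $\alpha\in\X$ is $I$-reducible, then $\alpha$ itself is $I$-reducible. The only tool needed is the left-compatibility of $<_{\X}$ from Lemma~\ref{lem:ac<bc}. The hypothesis $I\subseteq\ker\epsilon$ serves to exclude the empty word from all supports, since that lemma is stated only for non-empty words.

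First I dispose of the empty suffix. If $1$ were $I$-reducible, there would be $f\in I$ with $\LW(f)=1$. Since $1$ is the least element of $(\X,<_{\X})$, this forces $f=k\cdot 1$ with $k\neq 0$, so $I$ would contain a unit, contradicting properness (and also $\epsilon(f)=k\neq 0$). Hence $1$ is always $I$-irreducible, and we may assume the suffix is non-empty.

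Now let $\alpha=\gamma\beta$ with $\beta\neq 1$ an $I$-reducible suffix, and choose $f\in I$ with $\LW(f)=\beta$. Write $f=\sum_i k_i\delta_i+k\beta$ with $k\neq 0$ and each $\delta_i<_{\X}\beta$. The augmentation $\epsilon$ sends every non-empty word to $0$, so $\epsilon(f)$ is exactly the coefficient of the empty word in $f$. Since $f\in I\subseteq\ker\epsilon$, that coefficient is $0$, and therefore every $\delta_i$ is non-empty.

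Next consider $\gamma f\in I$, since $I$ is an ideal. Its support consists of $\gamma\beta$ and the words $\gamma\delta_i$, and concatenation on the left is injective, so the coefficient $k$ of $\gamma\beta$ survives. Both $\delta_i$ and $\beta$ are non-empty with $\delta_i<_{\X}\beta$, so Lemma~\ref{lem:ac<bc} gives $\gamma\delta_i<_{\X}\gamma\beta$. Hence $\LW(\gamma f)=\gamma\beta=\alpha$, and $\alpha$ is $I$-reducible, which is the required contrapositive.

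The only delicate point is the non-emptiness requirement in Lemma~\ref{lem:ac<bc}. The reduction order compares last letters first, so $\gamma\cdot 1=\gamma$ need not lie below $\gamma\beta$, and a constant term in $f$ could break the argument. This is exactly why the assumption $I\subseteq\ker\epsilon$ is needed, and it resolves the issue by the observation above.
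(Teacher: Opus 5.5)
Your proof is correct and is essentially the intended argument. The paper states this proposition without proof, but the only tools it sets up for it are the ones you use: the left-compatibility of $<_{\X}$ (Lemma~\ref{lem:ac<bc}), and the hypothesis $I\subseteq\ker\epsilon$ to exclude a constant term, which could otherwise dominate $\gamma\beta$. Your separate treatment of the empty suffix via properness of $I$ is also sound.
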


\begin{remark}
We note that the property of irreducibility is asymmetric with respect to word boundaries; specifically, a prefix of an $I$-irreducible word is not necessarily $I$-irreducible.
\end{remark}

For any letter $y \in X$, we introduce the following bounded subsets of letters:
\begin{align*}
X^{< y} := \{x \in X \mid x <_{X} y\}, \qquad X_{I}^{< y} := \{x \in X_{I} \mid x <_{X} y \}.
\end{align*}
Using these sets, we establish a linear independence property for irreducible letters in $O$.

\begin{lemma}\label{lem:irrl}
Let $(\K\X, \epsilon)$ be the augmented free algebra on $X$, and let $I \subseteq \ker \epsilon$ be a proper ideal of $\K\X$. If $y$ is an $I$-irreducible letter in $O$, then 
\begin{align*}
y \notin \K\X \cdot X^{< y} + I.
\end{align*}
\end{lemma}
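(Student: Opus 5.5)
The plan is to argue by contradiction: from a membership $y\in\K\X\cdot X^{<y}+I$ we will produce an element of $I$ whose leading word is $y$. This contradicts the assumption that $y$ is $I$-irreducible. The only tool needed is Corollary~\ref{cor:well-orders-compatibility-2}, which compares words ending in a small letter with the letter $y\in O$.

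Suppose, to the contrary, that $y = f + g$ with $f\in \K\X\cdot X^{<y}$ and $g\in I$. First we describe the support of $f$. The subspace $\K\X\cdot X^{<y}$ is spanned by the words $\gamma x$ with $\gamma\in\X$ and $x\in X^{<y}$. So we may write
\begin{align*}
f=\sum_{i} k_{i}\alpha_{i},
\end{align*}
where each $\alpha_{i}$ is a non-empty word with $r(\alpha_{i})<_{X} y$. Since $y\in O$ and $r(\alpha_i)<_X y$, the implication (b)$\Rightarrow$(a) of Corollary~\ref{cor:well-orders-compatibility-2} gives $\alpha_{i}<_{\X} y$ for every $i$. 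In particular, no $\alpha_{i}$ equals $y$, because the last letter of $y$ is $y$ itself and not a letter strictly below $y$.

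Now set $h:=y-f=g\in I$. The word $y$ occurs in $h$ with coefficient $1$, so $h\neq 0$. Every other word in the support of $h$ is some $\alpha_{i}$, which is strictly smaller than $y$ with respect to $<_{\X}$. Hence $\LW(h)=y$ with $h\in I$, so $y$ is $I$-reducible, contradicting the hypothesis. Therefore $y\notin\K\X\cdot X^{<y}+I$.

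There is no serious obstacle here. The one point needing care is that the comparison $\alpha_i<_{\X} y$ for words of arbitrary length rests on $y$ being an element of $O$ rather than merely of $X$. That is exactly the setting of Corollary~\ref{cor:well-orders-compatibility-2}, and the lack of right-compatibility of $<_{\X}$ does not enter the argument. The hypotheses that $I$ is proper and $I\subseteq\ker\epsilon$ are not needed for this argument.
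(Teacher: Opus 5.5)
Your proposal is correct and follows the paper's proof essentially verbatim: both assume $y\in\K\X\cdot X^{<y}+I$, use Corollary~\ref{cor:well-orders-compatibility-2} to see that every word ending in a letter of $X^{<y}$ is $<_{\X} y$, and conclude that the resulting element of $I$ has leading word $y$, which is a contradiction. Expanding into individual words rather than into polynomials $f_i x_i$ is only a minor tidying, and you are right that the paper's argument also never uses the properness of $I$ or $I\subseteq\ker\epsilon$.
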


\begin{proof}
Suppose for contradiction that the assertion is false. Then there exists an $I$-irreducible letter $y \in O$ such that 
\begin{align*}
y \in \K\X \cdot X^{< y} + I.
\end{align*}
Consequently, there exist polynomials $f_{i} \in \K\X$, letters $x_{i} \in X^{< y}$, and scalars $k_{i} \in \K$ such that the polynomial
\begin{align*}
p = y - \sum_{i} k_{i}f_{i}x_{i}
\end{align*}
belongs to $I$. Since the rightmost letter of each term satisfies $r(f_{i}x_{i}) = x_{i} <_{X} y$, Corollary~\ref{cor:well-orders-compatibility-2} ensures that $C(f_{i}x_{i}) <_{\lOr} y = C(y)$ for all indices $i$. From the definition of the reduction order on $\X$, it follows that $f_{i}x_{i} <_{\X} y$, which yields $\LW(p) = y$. Since $p \in I$, this implies that $y$ is $I$-reducible, yielding the desired contradiction.
\end{proof}

Given an augmented algebra $(A, \epsilon_{A})$ over $\K$, we select a generating set $X$ of $A$ such that $X \subseteq \ker \epsilon_{A}$. By a slight abuse of notation, we also let $X$ denote the generating set of the free algebra $\K\X$. Let $\pi: \K\X \rightarrow A$ be the canonical projection lifted from the assignment $f_{X}: X \rightarrow A$. Under this construction, $(\K\X, \epsilon)$ becomes an augmented algebra whose augmentation map $\epsilon: \K\X \rightarrow \K$ is given by the composition $\epsilon = \epsilon_{A} \circ \pi$. Let $I := \ker \pi$. It is immediate that $X, I \subseteq \ker \epsilon$.

Assume that $X = O_{0} \cup \left(\bigcup_{y \in O_{+}} C_{y}\right)$. We demonstrate that the corresponding set of irreducible letters is finite whenever the augmented algebra $A$ satisfies the left Noetherian property.

\begin{lemma}\label{lem:O_I-finite}
Let $(A, \epsilon_{A})$ be an augmented algebra generated by the set $X = O_{0} \cup \left(\bigcup_{y \in O_{+}} C_{y}\right)$. Let $\pi: \K\X \rightarrow A$ be the canonical projection and let $I := \ker \pi$. If $A$ is left Noetherian, then the sets $O_{I}$ and $O_{I,+}$ are finite. 
\end{lemma}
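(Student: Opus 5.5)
We argue by an ascending chain of left ideals built from the irreducible letters in $O$, ordered by $<_{O}$. Since $O_{I,+}\subseteq O_{I}$, it suffices to prove that $O_{I}$ is finite. For each $y\in O$, consider the left ideal of $A$
\begin{align*}
L_{y} := \sum_{x\in X^{<y}} A\,\pi(x),
\end{align*}
the left ideal generated by the images of all letters strictly smaller than $y$ in $<_{X}$. If $y<_{O} y'$, then every letter $x<_{X}y$ also satisfies $x<_{X}y'$, by the definition of $<_{X}$ through $C$ and Lemma~\ref{lem:compatibility-orders}(b). Hence $L_{y}\subseteq L_{y'}$, so $y\mapsto L_{y}$ is monotone along $(O,<_{O})$.

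The key step is strictness at irreducible letters. Let $y,y'\in O_{I}$ with $y<_{O}y'$. Then $y\in X^{<y'}$, so $\pi(y)\in L_{y'}$. On the other hand $\pi(y)\notin L_{y}$. Otherwise $y\in \K\X\cdot X^{<y}+I$, because $\pi$ is surjective and $I=\ker\pi$. This contradicts Lemma~\ref{lem:irrl}, whose hypotheses hold since $I\subseteq\ker\epsilon$ and $I$ is proper. (If $I$ were not proper, $A=0$ and every letter would be reducible, so $O_{I}=\emptyset$.) Hence $L_{y}\subsetneq L_{y'}$.

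Now suppose $O_{I}$ were infinite. Since $(O,<_{O})$ is well-ordered, $O_{I}$ contains a strictly increasing sequence $y_{1}<_{O}y_{2}<_{O}\cdots$: take $y_{1}$ to be the least element of $O_{I}$ and $y_{n+1}$ the least element of $O_{I}\setminus\{y_{1},\dots,y_{n}\}$. The chain $L_{y_{1}}\subsetneq L_{y_{2}}\subsetneq\cdots$ is then a strictly ascending chain of left ideals of $A$. This contradicts the left Noetherian hypothesis, so $O_{I}$ is finite, and therefore so is $O_{I,+}$.

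The main obstacle is the translation between membership in $L_{y}$ and membership in $\K\X\cdot X^{<y}+I$. This must be done with left multiplication by $\K\X$, so that it matches the right-end sensitive mirror reduction order used in Lemma~\ref{lem:irrl}. This sidedness is exactly why the argument uses \emph{left} ideals. A separate point is that letters of $X\setminus O$, the $y_{a}$, lie among the generators of $L_{y}$ but play no role in the chain; only irreducibility of the letters $y\in O$ is needed.
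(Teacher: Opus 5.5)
Your proof is correct and follows essentially the same route as the paper: an infinite increasing sequence in $O_{I}$ gives a strictly ascending chain of left ideals, with strictness supplied by Lemma~\ref{lem:irrl}. The only cosmetic differences are these. You take the $n$-th ideal to be the one generated by all images of letters below $y_{n}$, and you work directly in $A$, lifting through the surjection $\pi$. The paper instead uses only $\pi(x_{1}),\ldots,\pi(x_{n})$, builds the chain $\K\X\cdot\{x_{1},\ldots,x_{n}\}+I$ in the free algebra, and then projects it to $A$.
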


\begin{proof}
Suppose for contradiction that $O_{I}$ is infinite. We can therefore choose an infinite sequence of distinct letters $x_{1}, x_{2}, \ldots, x_{n}, \ldots$ in $O_{I}$ ordered such that 
$$x_{1} <_{O} x_{2} <_{O} \ldots <_{O} x_{n} <_{O} \ldots.$$
Using these elements, we construct the following ascending chain of left ideals in $\K\X$:
\begin{align*}
\K\X \cdot x_{1} + I \subseteq \K\X \cdot \{x_{1},x_{2}\} + I \subseteq \ldots \subseteq \K\X \cdot \{x_{1},\ldots,x_{n}\} + I \subseteq \ldots.
\end{align*}
Since $X, I \subseteq \ker \epsilon$, it follows that $1 \notin \K\X \cdot \{x_{1},\ldots,x_{n}\} + I$, ensuring that each left ideal in the chain is proper. Furthermore, Lemma~\ref{lem:irrl} implies that 
\begin{align*}
x_{n+1} \notin \K\X \cdot \{x_{1},\ldots,x_{n}\} + I \qquad \text{for all } n \ge 1.
\end{align*} 
Consequently, the chain of left ideals is strictly ascending.

Since $\pi$ is an algebra homomorphism, applying $\pi$ to this sequence yields a strictly ascending chain of left ideals in $A$:
\begin{align*}
A \cdot \pi(x_{1}) \subsetneq A \cdot \{\pi(x_{1}),\pi(x_{2})\} \subsetneq \ldots \subsetneq A \cdot \{\pi(x_{1}),\ldots,\pi(x_{n})\} \subsetneq \ldots.
\end{align*}
This directly contradicts the hypothesis that $A$ is left Noetherian. Therefore, $O_{I}$ must be finite, which immediately implies that its subset $O_{I,+}$ is finite.
\end{proof}

\section{Comultiplication on a Free Algebra}\label{sec:4}

This section utilizes the mirror reduction order, reduction-factorization, and prime words introduced in Section~\ref{sec:2} to determine the action of comultiplication on arbitrary words.

For a set $X$ equipped with a map $t_{X}: X \rightarrow \mathbb{N}$, the free algebra $\K\X$ on $X$ admits a grading $\K\X = \bigoplus_{n\ge 0} \K\X_{n}$ induced by setting $\deg(x) = t_{X}(x)$ for each $x \in X$. 

Assume that $X = O_{0} \cup \left(\bigcup_{y\in O_{+}} C_{y}\right)$ is the well-ordered set defined in Section~\ref{sec:2}, equipped with the map $t_{X}$ and the well order $<_{X}$. We introduce an algebraic comultiplication structure on the free algebra $\K\X$ as follows.

\begin{definition}\label{def:comultiplication}
A \textit{comultiplication} on $\K\X$ is an algebra homomorphism $\Delta: \K\X \rightarrow \K\X \otimes \K\X$. A comultiplication $\Delta$ on $\K\X$ is said to be \textit{skew-triangular} if it satisfies the following two conditions:
\begin{itemize}
\item For each $x \in X_{0}$, there exist elements $x' \in X_{0}$ and $r_{x}, x'' \in \K X_{0}$ such that
\begin{align*}
\Delta(x) = x \otimes (1 - r_{x}) + 1 \otimes x + \sum x' \otimes x'', \quad \text{with } x' \neq x;
\end{align*}
\item For each $x \in X_{+}$, there exist elements $x'_{+} \in X_{+}^{*}$, $x'_{0} \in X_{0}^{*}$, and $x'' \in \K\X^{+}$ such that
\begin{align*}
\Delta(x) = x \otimes 1 + 1 \otimes x + \sum x'_{+}x'_{0} \otimes x'',
\end{align*}
where $x'_{+}x'_{0} \neq 1$, $t_{X}(x'_{+}) < t_{X}(x)$, and $C(x'_{+}) <_{O} C(x)$.
\end{itemize}
\end{definition}

For convenience, we set $r_{x} := 0$ for each $x \in X_{+}$. 
Given a word $\omega = x_{1}\cdots x_{n} \in \X$ with $x_{1},\ldots,x_{n} \in X$, we adopt the shorthand notation:
\begin{align*}
[1-r_{\omega}] &:= \prod_{i=1}^{n}(1-r_{x_{i}}) \in \K\Xo.
\end{align*}

For a Hopf algebra whose coradical forms a Hopf subalgebra, we demonstrate that its coproduct can be lifted to a skew-triangular comultiplication on an appropriately constructed free algebra $\K\X$. 

\begin{lemma}\label{lem:X-generates-H}
Let $H$ be a Hopf algebra over $\K$ satisfying $S_{H}(H_{[0]}) \subseteq H_{[0]}$. Then there exist well-ordered generating sets $(O, <_O)$ and $(X, <_{X})$ of $H$, accompanied by assignments $f_{O}: O \rightarrow H$, $f_{X}: X \rightarrow H$ and degree maps $t_{O}: O \rightarrow \mathbb{N}$, $t_{X}: X \rightarrow \mathbb{N}$, such that the following assertions hold:
\begin{itemize}
\item [(a)] $X_{0} = O_{0}$, $X = O_{0} \cup \left(\bigcup_{y \in O_{+}} C_{y}\right)$, and $C_{y} = \{y_{a} \mid a \in O_{0}^{*}\}$;
\item [(b)] $f_{X}|_O = f_{O}$, $f_{X}(X) \subseteq \ker \epsilon_{H}$, $\K f_{X}(X_{0}) = \ker \epsilon_{H_{[0]}}$, and for any $a\in O_{0}^{*}$ and $y \in O_{+}$,
\begin{align*}
f_{X}(y_{a}) = \sum f_{O}(a)_{(1)} f_{O}(y) S_{H}(f_{O}(a)_{(2)});
\end{align*} 
\item [(c)] $\pi: \K\X \rightarrow H$ is the canonical projection lifted from $f_{X}$;
\item [(d)] $\K\X$ admits a skew-triangular comultiplication $\Delta$ and an augmentation $\epsilon$ satisfying
\begin{align*}
\Delta(I) \subseteq I \otimes \K\X + \K\X \otimes I, \quad \text{and} \quad X, I \subseteq \ker \epsilon,
\end{align*}
where $I := \ker \pi$.
\end{itemize}
\end{lemma}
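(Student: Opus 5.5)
We will build $O$ from the standard filtration $\{H_{[n]}\}_{n\ge 0}$, which is a Hopf algebra filtration because $S_{H}(H_{[0]}) \subseteq H_{[0]}$.

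\emph{Choice of $O$ and degrees.} Choose a basis $O_{0}$ of $\ker \epsilon_{H_{[0]}}$, adapted to the coradical filtration of the Hopf subalgebra $H_{[0]}$. Then for each $n \ge 1$ choose a set $O_{n}$ of elements of $H_{[n]}\cap\ker\epsilon_{H}$ whose classes form a basis of a complement of $H_{[n-1]}$ in $H_{[n]}$. Set $t_{O}(y)=n$ for $y\in O_{n}$ and let $f_{O}$ be the inclusion. Well-order $O$ so that $O_{0}$ comes first, then $O_{1}$, $O_{2}$, and so on. Within each $O_{n}$, order the elements compatibly with a refinement of the coradical (or standard) filtration, so that lower filtration pieces come earlier.

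Define $X$ and $C_{y}$ as in Section~\ref{sec:2}, and put
\begin{align*}
f_{X}(y_{a}) = \sum f_{O}(a)_{(1)} f_{O}(y) S_{H}(f_{O}(a)_{(2)}),
\end{align*}
the adjoint action. Since $H_{[n]}$ is stable under the adjoint action of $H_{[0]}$ (it is a Hopf filtration with $H_{[0]}$ as its zeroth term), we get $f_{X}(y_{a})\in H_{[n]}\cap\ker\epsilon_{H}$. Assertions (a)--(c) are then immediate. $X$ generates $H$ because $H=\bigcup_{n} H_{[n]}$ and $O$ spans $H$ modulo $\K 1$. Let $\pi$ be the induced surjection and $\epsilon=\epsilon_{H}\circ\pi$, so $X,I\subseteq\ker\epsilon$.

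\emph{Lifting the coproduct.} For each $x\in X$ we will define $\Delta(x)\in\K\X\otimes\K\X$ as a chosen preimage of $\Delta_{H}(f_{X}(x))$ of the required shape, and then extend it multiplicatively. Any such choice satisfies $(\pi\otimes\pi)\Delta=\Delta_{H}\pi$. Hence $\Delta(I)\subseteq\ker(\pi\otimes\pi)=I\otimes\K\X+\K\X\otimes I$, which gives (d) apart from skew-triangularity.

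For $x\in X_{0}$ we work inside $H_{[0]}$, where $O_{0}$ is ordered by the coradical filtration. Write
\begin{align*}
\Delta_{H}(x)-x\otimes 1-1\otimes x \in \ker\epsilon\otimes\ker\epsilon
\end{align*}
and expand it in the basis $O_{0}$. The term $x\otimes(\text{stuff})$ is collected as $-x\otimes r_{x}$. All remaining terms have left factor $x'\neq x$, which gives the required form $x\otimes(1-r_{x})+1\otimes x+\sum x'\otimes x''$.

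For $y\in O_{n}$ with $n\ge 1$, the fact that the filtration is a Hopf filtration gives
\begin{align*}
\Delta_{H}(y)\in\sum_{i+j=n}H_{[i]}\otimes H_{[j]}.
\end{align*}
Modulo terms of the form $y\otimes 1+1\otimes y$, the left factors lie in $H_{[n-1]}$ or arise through $H_{[0]}$. By Lemma~\ref{lem:gr[c]H-Radford-biproduct}, $(\gr_{\s}H)_{i}=B_{i}H_{[0]}$. So every element of $H_{[i]}$ can be written, modulo $H_{[i-1]}$, as a sum of products (positive letter)(degree-zero word). Inducting on $i$, we can express each left tensor factor as a combination of words $x'_{+}x'_{0}$ with $t_{X}(x'_{+})<n$, and we only need a single positive letter in front. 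Because $O$ is ordered by degree first, $t_{X}(x'_{+})<t_{X}(y)$ forces $C(x'_{+})<_{O}y$. The elements $y_{a}$ are treated the same way, either using the corresponding formula for $\Delta_{H}$ of an adjoint action or directly from their filtration degree.

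\emph{Main obstacle.} The hard part is the degree-$n$ piece of $\Delta_{H}(y)$, namely the component in $H_{[n]}\otimes H_{[0]}$. It equals $y\otimes 1$ plus terms $y'\otimes h$ with $y'$ of full degree $n$, produced by the $H_{[0]}$-coaction on $B_{n}$. These terms violate $t_{X}(x'_{+})<t_{X}(x)$.

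The first thing to try is to choose the lifts $O_{n}$ as coinvariants $B_{n}=(\gr_{\s}H)^{co\,\pi}$. Then in $\gr_{\s}H$ the coproduct of $y$ has degree-$(n,0)$ part exactly $y\otimes 1$, so any offending terms drop into $H_{[n-1]}\otimes H$. This requires choosing $O_{n}$ so that the lift of $y\otimes1$ is exact up to lower filtration, and pushing the $H_{[0]}$-coaction onto the right tensor factor, which the condition allows since there $x''$ is an arbitrary element of $\K\X^{+}$. If exact coinvariance cannot be arranged, a fallback is to order $O_{n}$ so that the coaction is triangular, analogous to the treatment of $X_{0}$.
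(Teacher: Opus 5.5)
Your proposed fix is exactly the paper's construction. The paper takes $O_n$ ($n\ge 1$) to be lifts in $H_{[n]}\cap\ker\epsilon_H$ of elements of the coinvariants $B_n$ of $\gr_{\s}H\cong B\sharp H_{[0]}$. It reads off from the biproduct coproduct that for $b\in B_n$ the $(n,0)$-component of $\Delta(b)$ is exactly $b\otimes 1$. The coaction part has left factor in $H_{[0]}$; after splitting off $1\otimes b$ by the counit, it is absorbed with $x'_+=1$, $x'_0\in X_0$. Lifting back to $H$ then gives $\Delta_H(y)-y\otimes 1-1\otimes y\in H_{[n-1]}\otimes H$.

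As written, however, you commit first to a choice that genuinely fails (a basis of a complement of $H_{[n-1]}$ in $H_{[n]}$), and you treat the coinvariant choice only as ``the first thing to try'', with a fallback. The coinvariant choice should be made outright. It is always available: lift a basis of $B_n$ and subtract $\epsilon_H(d)1\in H_{[n-1]}$. It is also forced, because skew-triangularity for $x\in X_n$ gives $\Delta_H(\pi(x))-\pi(x)\otimes 1-1\otimes\pi(x)\in H_{[n-1]}\otimes H$. That makes the class of $\pi(x)$ in $(\gr_{\s}H)_n$ right coinvariant. Since $(\gr_{\s}H)_n=B_nH_{[0]}$ is in general strictly larger than $B_n$, any basis of it contains non-coinvariant classes. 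For example, in Sweedler's algebra $\Delta(xg)=xg\otimes 1+1\otimes xg+xg\otimes(g-1)$, and the term $xg\otimes(g-1)$ has a left factor of full degree. For the same reason your fallback cannot work: the definition demands the strict drop $t_X(x'_+)<t_X(x)$, not merely $x'_+<_X x$ within a fixed degree.

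Switching to coinvariant lifts also invalidates two steps that relied on your first choice, and both need to be redone.

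\emph{Generation.} $O$ no longer spans $H$ modulo $\K 1$. Instead, use $(\gr_{\s}H)_n=B_nH_{[0]}$ to get $H_{[n]}=\K f_O(O_n)H_{[0]}+H_{[n-1]}$, and induct on $n$. This identity is also what lets each left factor in $H_{[n-1]}$ be written as a combination of $\pi(x'_+x'_0)$ with a single positive letter followed by a single letter of $X_0^{*}$.

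\emph{The letters $y_a$.} ``Directly from their filtration degree'' does not suffice, since filtration degree alone is exactly what broke your first choice. One option is to note that in $B\sharp H_{[0]}$ the adjoint action of $H_{[0]}$ on $B$ is the Yetter--Drinfeld action, so $\pi(y_a)$ again lifts an element of $B_n$. Alternatively, write $\mathrm{ad}_a y=\sum a_{(1)}yS_H(a_{(2)})$ and $\Delta_H(y)=y\otimes 1+1\otimes y+\sum y'\otimes y''$ with $y'\in H_{[n-1]}$, and compute \[\Delta_H(\mathrm{ad}_a y)=\mathrm{ad}_a y\otimes 1+\sum a_{(1)}S_H(a_{(3)})\otimes \mathrm{ad}_{a_{(2)}}y+\sum a_{(1)}y'S_H(a_{(4)})\otimes a_{(2)}y''S_H(a_{(3)}).\] The middle sum is $1\otimes\mathrm{ad}_a y$ plus terms with left factor in $\ker\epsilon_{H_{[0]}}$, and the last sum has left factors in $H_{[n-1]}$.

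With these changes your argument coincides with the paper's.
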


\begin{proof}
By Lemma~\ref{lem:gr[c]H-Radford-biproduct}, we have an isomorphism of graded Hopf algebras $\gr_{\s} H \cong B \sharp H_{[0]}$, where $B = \bigoplus_{n\ge 0} B_{n}$ is a graded connected Hopf algebra in the Yetter-Drinfel'd category ${^{H_{[0]}}_{H_{[0]}}\mathcal{YD}}$, and $(\gr_{\s} H)_{n} = B_{n}H_{[0]}$. 

For each $b \in B_{n}$ with $n \ge 1$, there exist elements $b_{1} \in B_{n-k}$ and $b_{2} \in B_{k}$ ($1 \le k \le n$) such that
\begin{align*}
\Delta_{B}(b) = b \otimes 1 + 1 \otimes b + \sum b_{1} \otimes b_{2}.
\end{align*}
Consequently, there exist elements $b_{(-1)}, b_{2(-1)} \in H_{[0]}$, $b_{(0)} \in B_{n}$, and $b_{2(0)} \in B_{k}$ ($1 \le k \le n$) such that
\begin{align*}
\Delta_{\gr_{\s} H}(b) = b \otimes 1 + \sum b_{(-1)} \otimes b_{(0)} + \sum \sum b_{1}b_{2(-1)} \otimes b_{2(0)}.
\end{align*}
Applying the counit axiom for $H_{[0]}$-comodules, we can express $\rho_{B}(b) = (1-z_{b}) \otimes b + v$ for some elements $z_{b} \in \ker \epsilon_{H_{[0]}}$ and $v \in \ker \epsilon_{H_{[0]}} \otimes (\gr_{\s} H)_{n}$. Hence, we can write 
\begin{align*}
\Delta_{\gr_{\s} H}(b) = b \otimes 1 + 1 \otimes b + \sum b' \otimes b''
\end{align*}
for some $b' \in (\gr_{\s} H)_{n-k} = B_{n-k}H_{[0]}$ and $b'' \in (\gr_{\s} H)_{k} = B_{k}H_{[0]}$ ($1 \le k \le n$).

Next, define the homogeneous filtration components:
\begin{align*}
H_{B,n} &:= \{ d \in H_{[n]} \mid 0 \neq d + H_{[n-1]} \in B_{n} \}, \quad n \ge 1, \qquad H_{B} := \bigcup_{n \ge 1} H_{B,n}.
\end{align*} 
It is straightforward to verify that 
\begin{align*}
H_{[n]} = \K H_{B,n}H_{[0]} + H_{[n-1]}, \qquad H = \K H_{B}H_{[0]} + H_{[0]}.
\end{align*}
Thus, for each $d \in H_{B,n} \cap \ker \epsilon_{H}$, there exist elements $d'_{+} \in \bigcup_{k=1}^{n-1} H_{B,k} \cup \{1\}$, $d'_{0} \in H_{[0]}$, and $d'' \in H_{[n]} \cap \ker\epsilon_{H}$ such that 
\begin{align*}
\Delta_{H}(d) = d \otimes 1 + 1 \otimes d + \sum d'_{+} d'_{0} \otimes d'', \quad \text{with } d'_{+}d'_{0} \in \ker \epsilon_{H}.
\end{align*}

Let $L$ be a basis of the augmentation ideal $\ker \epsilon_{H_{[0]}}$ of $H_{[0]}$. 
For each $z \in L$, since $\Delta_{H}(z) - z \otimes 1 - 1 \otimes z \in \K L \otimes \K L$, there exist elements $z' \in L$ and $r_{z}, z'' \in \K L $ such that
\begin{align*}
\Delta_{H}(z) = z \otimes (1-r_{z}) + 1 \otimes z + \sum z' \otimes z'', \quad \text{with } z' \neq z.
\end{align*}

We now choose a set $O$ of formal variables to serve as a generating set for $H$, equipped with an assignment map $f_{O}: O \rightarrow H$ and a degree function $t_{O}: O \rightarrow \mathbb{N}$ defined by $t_{O}(y) = \min\{n \mid f_{O}(y) \in H_{[n]}\}$. Here, $O_{0}$ denotes the set of formal variables corresponding to $L$, chosen such that
\begin{align*}
f_{O}(O) \subseteq \ker \epsilon_{H} \quad \text{and} \quad f_{O}(O_{0}) = L.
\end{align*}
We define the expanded variable sets:
\begin{align*}
X &:= O_{0} \cup \left(\bigcup_{y\in O_{+}} C_{y}\right), \qquad C_{y} := \{y_{a} \mid a \in O_{0}^{*}\} \quad \text{for } y \in O_{+},
\end{align*}
and define the assignment $f_{X}: X \rightarrow H$ via
\begin{align*}
f_{X}(x) &= f_{O}(x), \qquad \text{for } x \in O,\\
f_{X}(y_{a}) &= \sum f_{O}(a)_{(1)} f_{O}(y) S_{H}(f_{O}(a)_{(2)}), \qquad \text{for } y_{a} \in C_{y}, ~y \in O_{+}, ~ a \in O_{0}.
\end{align*}
Evidently, $f_{X}(X) \subseteq \ker \epsilon_{H}$. Let $\pi: \K\X \rightarrow H$ be the canonical algebra projection lifted from $f_{X}$, and let $I := \ker \pi$. Recall that the induced map $t_{X}: X \rightarrow \mathbb{N}$ satisfies $t_{X}(a) = t_{O}(a)$ and $t_{X}(y_{b}) = t_{X}(y) = t_{O}(y)$ for $a,b \in O_{0}$ and $y \in O_{+}$. It follows that $X_{0} = O_{0}$, and the free algebra $\K\X$ inherits the grading $\K\X = \bigoplus_{n\ge 0} \K\X_{n}$ determined by $\deg(x) = t_{X}(x)$ for $x \in X$.

By the commutation property of the left adjoint action and the local finiteness of coalgebras, the generating sets $O$ and $X$ can be chosen such that $\Delta_{H}$ lifts to an algebra homomorphism $\Delta: \K\X \rightarrow \K\X \otimes \K\X$ satisfying $\Delta_{H} \circ \pi = (\pi \otimes \pi) \circ \Delta$, subject to the following conditions:
\begin{itemize}
\item For every $x \in X_{0}$, there exist elements $x' \in X_{0}$ and $r_{x}, x'' \in \K X_{0}$ such that
\begin{align*}
\Delta(x) = x \otimes (1-r_{x}) + 1 \otimes x + \sum x' \otimes x'', \quad \text{with } x' \neq x;
\end{align*}
\item For every $x \in X_{+}$, there exist elements $x'_{+} \in X_{+}^{*}$, $x'_{0} \in X_{0}^{*}$, and $x'' \in \K\X^{+}$ such that
\begin{align*}
\Delta(x) = x \otimes 1 + 1 \otimes x + \sum x'_{+}x'_{0} \otimes x'', \quad \text{with } x'_{+}x'_{0} \neq 1 \text{ and } \ t_{X}(x'_{+}) < t_{X}(x).
\end{align*}
\end{itemize}
By construction, $\Delta$ defines a comultiplication on $\K\X$ such that $\Delta(I) \subseteq I \otimes \K\X + \K\X \otimes I$. Analogously, the counit $\epsilon_{H}$ lifts to an algebra map $\epsilon: \K\X \rightarrow \K$ satisfying $X, I \subseteq \ker \epsilon$.

Finally, we impose a total order $<_{X}$ on $X$. We first fix a well-ordering $<_{O_{n}}$ on each homogeneous component $O_{n}$ for $n \ge 0$. For any $x,y \in O$, we define:
\begin{align*}
x <_{O} y \Longleftrightarrow 
\left\{\begin{array}{l}
t_{O}(x) < t_{O}(y), \text{ or } \\
t_{O}(x) = t_{O}(y) = n \text{ and } x <_{O_{n}} y.
\end{array}\right.
\end{align*}
The induced total order $<_{X}$ on $X$ is then given as follows: for any $x, y \in X$, 
\begin{align*}
x <_{X} y \Longleftrightarrow 
\left\{\begin{array}{l}
t_{X}(x) < t_{X}(y), \text{ or } \\
t_{X}(x) = t_{X}(y) = n \text{ and } 
\left\{\begin{array}{l}
C(x) <_{O_{n}} C(y), \text{ or }\\
C(x) = C(y) = z \in O_{+} \text{ and } x <_{C_{z}} y.
\end{array}\right.
\end{array}\right.
\end{align*}
It is straightforward to verify that $(O, <_{O})$ forms a well-ordered set. By Lemma~\ref{lem:well-orders}, the sets $(X, <_{X})$, $(\lOr, <_{\lOr})$, and $(\X, <_{\X})$ are well-ordered. Moreover, the lifted comultiplication $\Delta$ is skew-triangular by construction.
\end{proof}

\begin{remark}\label{rmk:ab-c-I}
By the construction of $X_{0}$ (or equivalently, $O_{0}$), it is clear that for any $a, b \in X_{0}$, there exist elements $c \in X_{0}$ and scalars $k_{c} \in \K$ such that
$$ab \in \sum k_{c} c + I.$$
\end{remark}

Throughout the remainder of this section, we adopt the framework established in Lemma~\ref{lem:X-generates-H}, preserving all conventions and notations introduced in its proof. We now establish the following result regarding the structure of products:

\begin{lemma}\label{lem:zw-wz-I}
Let $a \in X_{0}$ and let $\omega = x_{1} \cdots x_{n} \in \Xp$ with $x_{i} \in X_{+}$. Set $y_{i} := C(x_{i})$ for each $1 \le i \le n$. Then there exist elements $a_{1}, \ldots, a_{n+1} \in X_{0}^{*}$, letters $(y_{1})_{a_{1}} \in C_{y_{1}}, \ldots, (y_{n})_{a_{n}} \in C_{y_{n}}$, and coefficients $k_{a_{1}, \ldots, a_{n+1}} \in \K$ such that
\begin{align*}
a \omega \in \sum k_{a_{1},\ldots,a_{n+1}} (y_{1})_{a_{1}} \cdots (y_{n})_{a_{n}} a_{n+1} + I, 
\end{align*}
where $(y_{1})_{a_{1}} \cdots (y_{n})_{a_{n}} a_{n+1} \le_{\X} ((y_{1})_{a_{1}} \cdots (y_{n})_{a_{n}} a_{n+1})_{L} <_{\X} a\omega$. More precisely,
\begin{align*}
C(((y_{1})_{a_{1}} \cdots (y_{n})_{a_{n}} a_{n+1})_{L}) = C((y_{1})_{a_{1}} \cdots (y_{n})_{a_{n}}) = C(\omega) <_{\lOr} C(a\omega).
\end{align*}
Consequently, the word $a\omega$ is $I$-reducible.
\end{lemma}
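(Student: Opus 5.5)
The plan is to move $a$ to the right past each letter of $\omega$ with the adjoint-action identity in $H$. For $h \in H_{[0]}$ and $g \in H$ we have
\begin{align*}
hg = \sum h_{(1)} g S_{H}(h_{(2)}) h_{(3)} = \sum (h_{(1)} \cdot g)\, h_{(2)},
\end{align*}
where $h_{(1)}\cdot g := \sum h_{(1)} g S_H(h_{(2)})$ is the left adjoint action. I will use this inside $H = \K\X/I$ and lift back through $\pi$.

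First I would do the case $n=1$, with $\omega = x_1$ and $x_1 = z_b$, $z = y_1 \in O_+$, $b\in O_0^*$. Write $\Delta_H(f_X(a)) = f_X(a)\otimes 1 + \sum h_{(1)}\otimes h_{(2)}$. All tensorands lie in $H_{[0]} = \K 1 + \K f_O(O_0)$, because $L$ spans $\ker\epsilon_{H_{[0]}}$.

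The identity then expresses $a z_b$ modulo $I$ as $\sum k\,(\text{adjoint of } f(a') \text{ on } f_X(z_b))\,a''$ with $a',a''\in O_0^*$. The adjoint action is multiplicative, so $f(a')\cdot f_X(z_b) = (f(a')f(b))\cdot f_O(z)$.

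By Remark~\ref{rmk:ab-c-I}, the product $a'b$ is congruent modulo $I$ to $\sum k_c c$ with $c\in X_0$ (or to a scalar, since $H_{[0]}$ is spanned by $1$ and $L$). Hence the adjoint term is a linear combination of the elements $f_X(z_c)$ with $c\in O_0^*$, and this uses $f_X(z_1)=f_X(z)$. Lifting gives
\[
a z_b \in \sum k\, z_{c}\, a'' + I .
\]

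Next I would induct on $n$. Suppose $a x_1\cdots x_n \equiv \sum k\,(y_1)_{a_1} a'\, x_2\cdots x_n$. I would apply the inductive hypothesis to each term $a' x_2 \cdots x_n$ when $a'\ne 1$; when $a'=1$ the term already has the required form after setting $a_2,\dots$ as the original indices. Collecting terms produces the asserted form $(y_1)_{a_1}\cdots(y_n)_{a_n}a_{n+1}$.

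For the order claims, put $\nu = (y_1)_{a_1}\cdots(y_n)_{a_n}a_{n+1}$. Then $C(\nu) = C(\omega)\,a_{n+1}$, with the $a_{n+1}$ factor dropped when $a_{n+1}=1$. Every letter of $C(\omega)$ lies in $O_+$, and $t_O$-ordering puts these above $O_0$, so the greatest original letter $m$ occurs in $C(\omega)$. Moreover, $r(C(\omega)) = y_n$, and $C(a_{n+1})(m)=0$.

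By Lemma~\ref{lem:properties-alpha_R}(d), applied after factoring at the last $O_+$-letter, we get $C(\nu_L) = C(\omega)$ up to the largest suffix in $O_+$. More carefully, $\nu_L$ ends at the last occurrence of an $m$-letter, and $C(\nu_L)$ is the greatest factor of $C(\nu)$ by Lemma~\ref{lem:C(a_L)=C(a)_L}(c). I expect this step to be the main obstacle: reconciling $C(\nu_L)$ with $C(\omega)$ exactly, as the statement asserts. I would first check whether Lemma~\ref{lem:properties-alpha_R}(d) forces the cut to occur after $(y_n)_{a_n}$. If it does not, I would argue instead that the cut occurs after the last letter of $C(\omega)$ equal to $m$, and handle the remaining comparison directly.

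In either case I then compare with $C(a\omega) = aC(\omega)$. The two words have the same last letter $y_n$, and $|C(\omega)| < |aC(\omega)|$, so $C(\omega) <_{\lOr} C(a\omega)$ by Definition~\ref{def:reduction-order}. Since also $C(\nu)\le_{\lOr} C(\nu_L)$ by the greatest-factor property, every term of the expansion satisfies $\nu <_{\X} a\omega$. The polynomial $a\omega - \sum k\nu$ therefore lies in $I$ and has leading word $a\omega$, which proves that $a\omega$ is $I$-reducible.
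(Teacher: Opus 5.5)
Your rewriting of $a\omega$ modulo $I$ is correct and is the paper's argument. The paper treats the single-letter case $\omega=y_b$ with the same identity $hg=\sum(h_{(1)}\cdot g)\,h_{(2)}$ (it inserts $S_H(\pi(a)_{(2)(1)})\pi(a)_{(2)(2)}$), merges $a'b$ by Remark~\ref{rmk:ab-c-I}, and handles longer words by induction on length.

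The gap is in the order comparison. Your last inference goes from $C(\nu)\le_{\lOr}C(\nu_L)$ and $C(\omega)<_{\lOr}C(a\omega)$ to $\nu<_{\X}a\omega$. That step needs $C(\nu_L)\le_{\lOr}C(\omega)$, which you never prove, and the obstacle you flagged cannot be removed in general. By Lemma~\ref{lem:properties-alpha_R}(d), $\nu_L$ ends at the last letter of $\nu$ whose original element is $m=C(m_{\omega})$. That letter is $(y_n)_{a_n}$ exactly when $y_n=m$, i.e.\ when $\omega$ is prime. If $\omega$ is not prime, $C(\nu_L)$ is a proper prefix of $C(\omega)$ ending in $m>_{O}y_n=r(C(a\omega))$. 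Then $C(\nu_L)\neq C(\omega)$, and in fact $\nu_L>_{\X}a\omega$. For instance, in Example~\ref{eg:reduction-order} take $\omega=yx$. Then $\nu=y_{a_1}x_{a_2}a_3$ and $\nu_L=y_{a_1}$, with $C(\nu_L)=y>_{\lOr}ayx=C(a\omega)$. So the refined assertions $C(\nu_L)=C(\omega)$ and $\nu_L<_{\X}a\omega$ hold only for prime $\omega$. The paper's proof in fact checks them only for a single letter, and no ``direct handling'' will give them otherwise.

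What you need instead is to bypass $\nu_L$, using $C(\nu)=C(\omega)\,C(a_{n+1})$.
\begin{itemize}
\item If $a_{n+1}\neq 1$, the last letter of $C(\nu)$ lies in $O_0$. By the degree-first order on $O$ it is $<_{O}y_n$, the last letter of $C(a\omega)$.
\item If $a_{n+1}=1$, then $C(\nu)=C(\omega)$ has the same last letter as $aC(\omega)$ and is shorter.
\end{itemize}
Either way $C(\nu)<_{\lOr}C(a\omega)$, so $\nu<_{\X}a\omega$ for every $\omega$. Hence $a\omega-\sum k\nu\in I$ has leading word $a\omega$, which is the reducibility claim. When $\omega$ is prime, Lemma~\ref{lem:properties-alpha_R}(d) does force the cut after $(y_n)_{a_n}$, giving $C(\nu_L)=C(\omega)$. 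Your comparison $C(\omega)<_{\lOr}aC(\omega)$ then yields the full chain $\nu\le_{\X}\nu_L<_{\X}a\omega$.
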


\begin{proof}
By induction on the word length, it suffices to consider the base case where $\omega = y_{b} \in X_{+}$ for some $y \in O_{+}$ and $b \in O_{0}^{*}$. By utilizing the canonical algebra projection $\pi$ constructed in Lemma~\ref{lem:X-generates-H}, we obtain:
\begin{align*}
\pi(ay_{b}) &= \pi(a) \pi(y_{b}) \\
& = \sum \pi(a) \pi(b)_{(1)} \pi(y) S_{H}(\pi(b)_{(2)}) \\
& = \sum \pi(a)_{(1)} \pi(b)_{(1)} \pi(y) S_{H}( \pi(b)_{(2)} ) S_{H}( \pi(a)_{(2)(1)} ) \pi(a)_{(2)(2)} \\
& = \sum \pi(a)_{(1)(1)} \pi(b)_{(1)} \pi(y) S_{H}( \pi(b)_{(2)} ) S_{H}( \pi(a)_{(1)(2)} ) \pi(a)_{(2)}.
\end{align*}
By virtue of the comultiplication $\Delta$ defined in Lemma~\ref{lem:X-generates-H}, there exist elements $a', a'' \in X_{0}^{*}$ and scalars $k_{a',a''} \in \K$ such that
\begin{align*}
\Delta_{H}(\pi(a)) &= (\pi \otimes \pi)(\Delta(a)) = \sum k_{a',a''} \pi(a') \otimes \pi(a'').
\end{align*}
Substituting this expression into the identity for $\pi(ay_{b})$ yields:
\begin{align*}
\pi(ay_{b}) & = \sum k_{a',a''} \pi(a')_{(1)} \pi(b)_{(1)} \pi(y) S_{H}(\pi(b)_{(2)}) S_{H}(\pi(a')_{(2)}) \pi(a'') \\
& = \sum k_{a',a''} \pi(a'b)_{(1)} \pi(y) S_{H}(\pi( a'b)_{(2)} ) \pi(a'').
\end{align*} 
From Remark~\ref{rmk:ab-c-I}, it follows that each product $a' b$ can be  expressed modulo $I$ as $a' b \in \sum k_{a_{1}} a_{1} + I$ for some $a_{1} \in X_{0}^{*}$ and $k_{a_{1}} \in \K$. Setting $a_{2} := a''$ and $k_{a_{1},a_{2}} := k_{a_{1}} k_{a',a''}$, we obtain:
\begin{align*}
\pi(ay_{b}) = \sum \sum k_{a_{1},a_{2}} \pi(a_{1})_{(1)} \pi(y) S_{H}(\pi( a_{1})_{(2)} ) \pi(a_{2}) = \sum \sum k_{a_{1},a_{2}} \pi(y_{a_{1}}) \pi(a_{2}).
\end{align*}
This equality implies that $ay_{b} \in \sum k_{a_{1},a_{2}} y_{a_{1}} a_{2} + I$. Furthermore, we observe that $C(y_{a_{1}}a_{2}) <_{\lOr} C(ay_{b})$ holds because
$$C(y_{a_{1}}a_{2}) \le_{\lOr} C( (y_{a_{1}}a_{2})_{L} ) = C(y_{a_{1}}) = y <_{\lOr} ay = C(ay_{b}).$$
By definition of the reduction order on $\X$, this inequality ensures that $y_{a_{1}}a_{2} <_{\X} ay_{b}$. Consequently, the leading word of the corresponding relation in $I$ is $ay_{b}$, proving that $ay_{b}$ is $I$-reducible. 
\end{proof}

Next, we determine the action of comultiplication on the prime words in $\Xp$.

\begin{lemma}\label{lem:comultiplication-prime-words}
Let $\omega$ be a prime word in $\Xp$. Then there exist non-empty words $\omega' \in \Xp\Xo$ and polynomials $\omega'' \in \K\X^{+}$ such that
\begin{align*}
\Delta(w) \in \omega \otimes 1 + 1 \otimes \omega + \sum \omega' \otimes \omega'' + I \otimes \K\X,
\end{align*}
where $C(\omega') \le_{\lOr} C((\omega')_{L}) <_{\lOr} C(\omega)$, which implies $\omega' \le_{\X} (\omega')_{L} <_{\X} \omega$.
\end{lemma}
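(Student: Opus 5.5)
We argue by induction on the length of the prime word $\omega\in\Xp$, using the multiplicativity of $\Delta$ together with the structure of prime words from Lemma~\ref{lem:properties-alpha_R}. Write $\omega=x_{1}\cdots x_{n}$ with $x_{i}\in X_{+}$ and set $m:=C(m_{\omega})$. Since $\omega$ is prime, Lemma~\ref{lem:properties-alpha_R}(c) gives $r(C(\omega))=m$, so the last letter $x_{n}$ has original element $m$.

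For the base case $n=1$, $\omega=x$ is a letter in $X_{+}$. Skew-triangularity yields
\[
\Delta(x)=x\otimes 1+1\otimes x+\sum x'_{+}x'_{0}\otimes x'',
\]
with $x'_{+}x'_{0}\neq 1$ and $C(x'_{+})<_{O}C(x)$. If $x'_{+}\neq 1$, then $\omega':=x'_{+}x'_{0}\in\Xp\Xo$ has $(\omega')_{L}=x'_{+}$ by Lemma~\ref{lem:properties-alpha_R}(d). Hence $C(\omega')\le_{\lOr}C(x'_{+})<_{\lOr}C(x)$ by Corollary~\ref{cor:well-orders-compatibility-2}. The case $x'_{+}=1$ gives a term $x'_{0}\otimes x''$ with $x'_{0}\in X_{0}^{+}$, which is not of the required shape. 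I would fold such terms into the primitive-like part via a coideal/counit argument, or observe that the construction in Lemma~\ref{lem:X-generates-H} actually makes $d'_{+}d'_{0}$ live in positive degree. This is a point to check against the grading of $\gr_{\s}H$. Also, $x''\in\K\X^{+}$ holds because $\epsilon$ kills $X$.

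For the inductive step, write $\omega=x_{1}\cdots x_{n}$ and expand
\[
\Delta(\omega)=\prod_{i}\bigl(x_{i}\otimes 1+1\otimes x_{i}+\textstyle\sum x'_{i}\otimes x''_{i}\bigr).
\]
The product of all the first summands gives $\omega\otimes 1$, and the product of all the second summands gives $1\otimes\omega$. Every other term has left tensor factor of the form $u_{1}u_{2}\cdots u_{n}$, where each $u_{i}$ is $x_{i}$, $1$, or some $(x_{i})'_{+}(x_{i})'_{0}$, and the right factor is non-empty. I would then normalise each left factor modulo $I$ into $\Xp\Xo$ by pushing degree-zero letters to the right with Lemma~\ref{lem:zw-wz-I}: $a\cdot(y_{1})_{b_{1}}\cdots\equiv\sum(y_{1})_{a_{1}}\cdots a_{n+1}$. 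This step preserves $C$ of the positive part, and Remark~\ref{rmk:ab-c-I} absorbs products of degree-zero letters. The result is a combination of words $\omega'=\nu a$ with $\nu\in\Xp$, $a\in X_{0}^{*}$, where $C(\nu)$ is obtained from $C(\omega)$ by deleting letters or replacing a letter by strictly $<_{O}$-smaller letters. Such a word lies in $\K\X\otimes I$ or $I\otimes\K\X$ only up to an error; keeping only $I\otimes\K\X$ requires doing the rewriting on the left factor alone, which is legitimate since Lemma~\ref{lem:zw-wz-I} is a statement modulo $I$ applied on the left.

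The main obstacle is the comparison $C((\omega')_{L})<_{\lOr}C(\omega)$. By Lemma~\ref{lem:properties-alpha_R}(d), $C((\omega')_{L})$ is the prefix of $C(\nu)$ ending at the last occurrence of $m':=m_{C(\nu)}\le_{O}m$. There are two cases.

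\emph{Case $m'<_{O}m$.} The rightmost letter is smaller, so the inequality holds immediately.

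\emph{Case $m'=m$.} Here $C((\omega')_{L})$ is a subword of $C(\omega)$ ending in $m$. It is obtained by deleting letters (or replacing some by smaller ones) in the prefix of $C(\omega)$ ending at the corresponding occurrence of $m$. Either it is strictly shorter than $C(\omega)$, or it has the same length and differs by a smaller letter; in the latter case $<_{\mlex_{O}}$ decides. The delicate sub-case is when the last $m$ of $C(\omega)$ (that is, $x_{n}$) is dropped while an earlier $m$ survives. The resulting word is shorter, so this case is still fine. Replacements are the one place needing care: a replaced letter yields words whose letters are all smaller, so whenever lengths agree the mirror-lex comparison favours $C(\nu)$. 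I would check this, together with the length-increase issue, carefully using $t_{X}(x'_{+})<t_{X}(x)$ and the fact that $<_{O}$ refines degree. Finally, the inequality $C(\omega')\le_{\lOr}C((\omega')_{L})$ is Lemma~\ref{lem:C(a_L)=C(a)_L}(c).
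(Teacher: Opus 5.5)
Your argument is correct in substance. It is organized differently from the paper, and the obstacle you flag in the base case is not real.

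\textbf{The base case.} Since $1\in\Xp$, we have $\Xo\subseteq\Xp\Xo$. So a left factor $x'_{0}$ with $x'_{+}=1$ is already of the required shape. The same holds, in your global expansion, for any left factor whose positive part disappears after normalization. Such factors satisfy the bound because $r(C((\omega')_{L}))\in O_{0}$ while $r(C(\omega))=m\in O_{+}$. In the framework of Lemma~\ref{lem:X-generates-H} the order $<_{O}$ compares degree first, so all of $O_{0}$ lies below $O_{+}$. Equivalently, take $m'$ to be the greatest letter of $C(\omega')$ rather than of $C(\nu)$, which is undefined when $\nu=1$; these words then fall under your case $m'<_{O}m$.

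Discard both of your suggested remedies. The construction explicitly allows $d'_{+}=1$, and such terms genuinely occur (for instance from the coaction $\rho_{B}$). No counit argument removes them.

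Likewise, there is no length-increase issue, since $x'_{+}\in X_{+}^{*}$ is at most one letter. Your ``inductive step'' never uses the induction hypothesis, so the argument is in fact direct. Your hedged equal-length step is fine as stated. When $m'=m$ and $|C(\nu_{L})|=|C(\omega)|$ there are no deletions, so positions align, and the rightmost replaced position decides the $<_{\mlex_{O}}$ comparison in your favour.

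\textbf{Comparison with the paper.} The paper writes $\omega=x\alpha$, observes that $\alpha$ is again prime, and applies the induction hypothesis to $\Delta(\alpha)$. It then moves $x'_{0}$ past $\alpha$ and $\alpha'_{+}$ with Lemma~\ref{lem:zw-wz-I}. Finally it checks eight types of left factors one by one, using left-compatibility (Lemma~\ref{lem:ac<bc}) together with $C((\alpha')_{L})<_{\lOr}C(\alpha)$.

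You instead expand $\Delta(\omega)$ over all letters at once. Your key observation is that, after normalization, $C(\nu)$ arises from $C(\omega)$ by deleting letters and replacing letters by strictly $<_{O}$-smaller single letters, with at least one such change. Comparing last letter, then length, then $<_{\mlex_{O}}$ finishes the argument uniformly.

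Your route is shorter and makes clear that primality ($r(C(\omega))=m$) is exactly what is needed. The paper's two-factor bookkeeping is less transparent, but it is the template it reuses for the non-prime case in Lemma~\ref{lem:comultiplication-words-2}.
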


\begin{proof}
We proceed by induction on the length of the word $\omega$. For $|\omega| = 1$, the assertion follows immediately from the definition of the skew-triangular comultiplication on $X$. Now, assume $|\omega| \ge 2$ and decompose the word as $\omega = x\alpha$ for some letter $x \in X$. Since $r(C(\alpha)) = r(C(\omega)) = C(m_{\omega}) = C(m_{\alpha})$, Lemma~\ref{lem:properties-alpha_R}~(c) implies that $\alpha_{L} = \alpha$. 

Consequently, by the induction hypothesis (since $|\alpha| < |\omega|$), there exist elements $x'_{+} \in X_{+}^{*}$, $x'_{0} \in X_{0}^{*}$, $\alpha'_{+} \in \Xp$, and $\alpha'_{0} \in \Xo$ (setting $\alpha' := \alpha'_{+}\alpha'_{0}$), along with polynomials $x'', \alpha'' \in \K\X^{+}$, such that
\begin{align*}
\Delta(w) =& \ \Delta(x)\Delta(\alpha) \\
=&  \left(x \otimes 1 + 1 \otimes x + \sum x'_{+}x'_{0} \otimes x''\right)\left(\alpha \otimes 1 + 1 \otimes \alpha + \sum \alpha' \otimes \alpha''\right)\\
=& \  x\alpha \otimes 1 + x \otimes \alpha + \sum x\alpha' \otimes \alpha''\\
& + \alpha \otimes x + 1 \otimes x\alpha + \sum \alpha' \otimes x\alpha''\\
& + \sum x'_{+}x'_{0} \alpha \otimes x'' + \sum x'_{+}x'_{0} \otimes x'' \alpha + \sum \sum x'_{+}x'_{0} \alpha' \otimes x''\alpha'',
\end{align*}
where $x'_{+}x'_{0} \neq 1$, $\alpha' \neq 1$, $C(x'_{+}) <_{O} C(x)$, and $C(\alpha') \le_{\lOr} C((\alpha')_{L}) = C((\alpha'_{+})_{L}) <_{\lOr} C(\alpha)$. 

By Lemma~\ref{lem:zw-wz-I}, there exist elements $x'_{01}, x'_{02} \in X_{0}^{*}$, scalars $k_{x_{01}',x'_{02}} \in \K$, and non-empty prime words $\alpha_{x'_{01}} \in \Xp$ such that 
\begin{align*}
x_{0}' \alpha \in \sum k_{x_{01}',x'_{02}} \alpha_{x'_{01}} x'_{02} + I, \qquad \text{with } C(\alpha_{x'_{01}}) = C(\alpha).
\end{align*}
Similarly, for each term $\alpha'$ satisfying $\alpha'_{+} \neq 1$, there exist scalars $\ell_{x_{01}',x'_{02}} \in \K$ and non-empty prime words $(\alpha'_{+})_{x'_{01}} \in \Xp$ such that
\begin{align*}
x_{0}' \alpha'_{+} \in \sum \ell_{x_{01}',x'_{02}} (\alpha'_{+})_{x'_{01}} x'_{02} + I, \qquad \text{with } C((\alpha'_{+})_{x'_{01}}) = C(\alpha'_{+}).
\end{align*}
Substituting these relations back into the expression for $\Delta(\omega)$, we obtain the following containment modulo $I \otimes \K\X$:
\begin{align*}
\Delta(w) \in  
& \ x \alpha \otimes 1 + x \otimes \alpha + \sum x\alpha' \otimes \alpha'' + \alpha \otimes x + 1 \otimes x\alpha + \sum \alpha' \otimes x\alpha''\\
& + \sum \sum k_{x'_{01},x'_{02}} x'_{+} \alpha_{x'_{01}} x'_{02} \otimes x'' + \sum x'_{+}x'_{0} \otimes x'' \alpha + \sum \sum_{\alpha'_{+} = 1} x'_{+} x'_{0} \alpha'_{0} \otimes x''\alpha'' \\
& + \sum \sum_{\alpha'_{+} \neq 1} \sum \ell_{x'_{01},x'_{02}} x'_{+}(\alpha'_{+})_{x'_{01}} x'_{02} \alpha'_{0} \otimes x''\alpha'' + I \otimes \K\X.
\end{align*}

By virtue of Lemma~\ref{lem:C(a_L)=C(a)_L}, it suffices to verify that every left-tensor word $\omega'$ appearing in the sums above satisfies the condition $C((\omega')_{L}) <_{\lOr} C(\omega)$. Since $\omega$ is assumed to be a prime word, Lemma~\ref{lem:C(a_L)=C(a)_L} ensures that $C(\alpha) <_{\lOr} C(\omega)$ and $C(x) <_{\lOr} C(\omega)$. We now check this condition case by case using Lemmas~\ref{lem:ac<bc} and \ref{lem:C(a_L)=C(a)_L}:

\begin{itemize}
\item \textit{Case $\omega' = x$}: 
We have $C(x_{L}) = C(x) <_{\lOr} C(\omega)$.

\item \textit{Case $\omega' = x\alpha'$}:
Here, $C((x\alpha')_{L}) = (C(x)C(\alpha'))_{L}$. 
\begin{itemize}
    \item If $m_{C(x)} >_{O} m_{C(\alpha')}$, then $(C(x)C(\alpha'))_{L} = C(x) <_{\lOr} C(\omega)$.
    \item If $m_{C(x)} \le_{O} m_{C(\alpha')}$, then $(C(x)C(\alpha'))_{L} = C(x) C(\alpha')_{L} <_{\lOr} C(x) C(\alpha) = C(\omega)$.
\end{itemize}

\item \textit{Case $\omega' = \alpha$}:  
We have $C(\alpha_{L}) = C(\alpha) <_{\lOr} C(\omega)$.

\item \textit{Case $\omega' = \alpha'$}:  
We have $C((\alpha')_{L}) <_{\lOr} C(\alpha) <_{\lOr} C(\omega)$. 

\item \textit{Case $\omega' = x'_{+} \alpha_{x'_{01}} x'_{02}$}:  
Here, $C((x'_{+} \alpha_{x'_{01}} x'_{02})_{L}) = (C(x'_{+})C(\alpha))_{L}$.
\begin{itemize}
    \item If $m_{C(x'_{+})} >_{O} m_{C(\alpha)}$, then $(C(x'_{+})C(\alpha))_{L} = C(x'_{+}) <_{\lOr} C(x) <_{\lOr} C(\omega)$.
    \item If $m_{C(x'_{+})} \le_{O} m_{C(\alpha)}$, then $(C(x'_{+})C(\alpha))_{L} = C(x'_{+}) C(\alpha)_{L} = C(x'_{+}) C(\alpha) <_{\lOr} C(x) C(\alpha) = C(\omega)$.
\end{itemize}

\item \textit{Case $\omega' = x'_{+}x'_{0}$}:  
We have $C((x'_{+}x'_{0})_{L}) = C(x'_{+}) <_{\lOr} C(x) <_{\lOr} C(\omega)$. 

\item \textit{Case $\omega' = x'_{+}x'_{0} \alpha'_{0}$}: 
We have $C((x'_{+}x'_{0} \alpha'_{0})_{L}) = C(x'_{+}) <_{\lOr} C(x) <_{\lOr} C(\omega)$.

\item \textit{Case $\omega' = x'_{+}(\alpha'_{+})_{x'_{01}} x'_{02} \alpha'_{0}$ with $\alpha_{+}' \neq 1$}: 
Here, $C((x'_{+}(\alpha'_{+})_{x'_{01}} x'_{02} \alpha'_{0})_{L}) = (C(x'_{+}) C(\alpha'_{+}))_{L}$.
\begin{itemize}
    \item If $m_{C(x'_{+})} >_{O} m_{C(\alpha'_{+})}$, then $(C(x'_{+})C(\alpha'_{+}))_{L} = C(x'_{+}) <_{\lOr} C(x) <_{\lOr} C(\omega)$.
    \item If $m_{C(x'_{+})} \le_{O} m_{C(\alpha'_{+})}$, then $(C(x'_{+})C(\alpha'_{+}))_{L} = C(x'_{+}) C(\alpha'_{+})_{L} <_{\lOr} C(x'_{+})C(\alpha) <_{\lOr} C(x)C(\alpha) = C(\omega)$.
\end{itemize}
\end{itemize}

Combining all cases, it follows that every resulting term satisfies $C(\omega') \le_{\lOr} C((\omega')_{L}) <_{\lOr} C(\omega)$, and consequently $\omega' \le_{\X} (\omega')_{L} <_{\X} \omega$, completing the proof.
\end{proof}
We now determine the action of comultiplication on arbitrary non-prime words in $\Xp\Xo$.

\begin{lemma}\label{lem:comultiplication-words-2}
Let $\omega$ be a non-empty non-prime word in $\Xp\Xo$, and let $\mathrm{mrf}(\omega) = (\omega_{L}, \omega_{R})$. Then the following assertions hold:
\begin{itemize}
\item [$\mathrm{(a)}$] There exist non-empty words $(\omega_{L})', (\omega_{R})' \in \Xp\Xo$ and polynomials $(\omega_{L})'', (\omega_{R})'' \in \K\X^{+}$ such that 
\begin{align*}
\Delta(\omega_{L}) &\in \omega_{L} \otimes 1 + 1 \otimes \omega_{L} + \sum (\omega_{L})' \otimes (\omega_{L})'' + I \otimes \K\X,\\
\Delta(\omega_{R}) &\in \omega_{R} \otimes [1-r_{\omega_{R}}] + 1 \otimes \omega_{R} + \sum (\omega_{R})' \otimes (\omega_{R})'' + I \otimes \K\X,
\end{align*}
where the greatest letters satisfy $C(m_{(\omega_{L})'}) \le_{O} C(m_{\omega_{L}}) = C(m_{\omega})$ and $C(m_{(\omega_{R})'}) \le_{O} C(m_{\omega_{R}}) < C(m_{\omega})$, and the prime words satisfy $C((\omega_{L})') \le_{\lOr} C(((\omega_{L})')_{L}) <_{\lOr} C(\omega_{L})$.
\item [$\mathrm{(b)}$] There exist non-empty words $\omega' \in \Xp\Xo$ and polynomials $\omega'' \in \K\X^{+}$ such that 
\begin{align*}
\Delta(\omega) &\in \omega \otimes [1-r_{\omega}] + \omega_{L} \otimes \omega_{R} + 1 \otimes \omega + \sum \omega' \otimes \omega'' + I \otimes \K\X,
\end{align*}
where either $C((\omega')_{L}) <_{\lOr} C(\omega_{L})$ holds, or $\omega' = \omega_{L}f$ for some word $f \in \X^{+}$ satisfying $C(m_{f}) <_{O} C(m_{\omega_{L}})$.
\end{itemize}
\end{lemma}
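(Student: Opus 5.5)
The plan is to split $\omega = \omega_{L}\omega_{R}$ via the mr-factorization, compute $\Delta$ of each factor separately, and multiply. Since $\omega \in \Xp\Xo$ is non-prime, Lemma~\ref{lem:properties-alpha_R}~(c),(d) give $r(C(\omega_{L})) = m := C(m_{\omega})$ and $C(\omega_{R})(m)=0$.

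For part (a), the factor $\omega_{L}$ is prime, so Lemma~\ref{lem:comultiplication-prime-words} gives the displayed formula for $\Delta(\omega_{L})$ and the bound $C(((\omega_{L})')_{L}) <_{\lOr} C(\omega_{L})$ directly. The bound on greatest letters comes from skew-triangularity. Every left tensor factor is built from letters $x$ of $\omega_{L}$, from $x'_{+}$ with $C(x'_{+}) <_{O} C(x)$, or from degree-zero letters. Lemma~\ref{lem:zw-wz-I} preserves original elements, so it never raises $C(m_{\cdot})$.

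For $\omega_{R}$, write $\omega_{R} = x_{1}\cdots x_{k}$ and expand $\Delta(\omega_{R}) = \prod_{i}\Delta(x_{i})$ letter by letter, with no primality needed. The term $x_{1}\cdots x_{k}\otimes\prod(1-r_{x_{i}})$ is exactly $\omega_{R}\otimes[1-r_{\omega_{R}}]$, because $r_{x}=0$ for positive letters. The term taking $1\otimes x_{i}$ in every factor gives $1\otimes\omega_{R}$. Every other term has a nonempty left factor and a right factor of positive length. Its left factor is a word mixing positive and degree-zero letters, and I will reorder it into the form $\Xp\Xo$ modulo $I\otimes\K\X$ using Lemma~\ref{lem:zw-wz-I}, pushing each degree-zero letter to the right past the positive letters. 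The same letter-origin argument gives $C(m_{(\omega_{R})'}) \le_{O} C(m_{\omega_{R}}) <_{O} m$.

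For part (b), I multiply the two expansions from (a):
\begin{align*}
\Delta(\omega)=\Big(\omega_{L}\otimes 1+1\otimes\omega_{L}+\sum(\omega_{L})'\otimes(\omega_{L})''\Big)\Big(\omega_{R}\otimes[1-r_{\omega_{R}}]+1\otimes\omega_{R}+\sum(\omega_{R})'\otimes(\omega_{R})''\Big).
\end{align*}
The distinguished terms are handled as follows.
\begin{itemize}
\item The term $\omega_{L}\omega_{R}\otimes[1-r_{\omega_{R}}]$ is $\omega\otimes[1-r_{\omega}]$, since $r_{x}=0$ on $X_{+}$ and $\omega_{L}$ contains the positive letter $m_{\omega}$. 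This step needs to be checked carefully. Degree-zero letters inside $\omega_{L}$ are not literally excluded, but $\omega\in\Xp\Xo$ forces all degree-zero letters to lie at the end. Hence, as $\omega_{L}$ ends in a positive letter, $\omega_{L}\in\Xp$ and $[1-r_{\omega_{L}}]=1$.
\item The products $\omega_{L}\otimes\omega_{R}$ and $1\otimes\omega$ appear as stated.
\item The terms $\omega_{L}(\omega_{R})'\otimes(\omega_{R})''$ have the form $\omega_{L}f$ with $C(m_{f}) <_{O} m$.
\end{itemize}

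All remaining terms have left factor $(\omega_{L})'$, $(\omega_{L})'\omega_{R}$, $(\omega_{L})'(\omega_{R})'$, or $(\omega_{R})'$, possibly after reordering with Lemma~\ref{lem:zw-wz-I}. For each I must show $C((\omega')_{L}) <_{\lOr} C(\omega_{L})$. The term $(\omega_{R})'$ has no letter with original element $m$, so its $L$-part has last letter below $m = r(C(\omega_{L}))$ and is smaller by the definition of $<_{\lOr}$. For $(\omega_{L})'g$ with $C(m_{g}) <_{O} m$, Lemma~\ref{lem:C(a_L)=C(a)_L} gives $C(((\omega_{L})'g)_{L}) = (C((\omega_{L})')C(g))_{L}$. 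I will do a case split as in the proof of Lemma~\ref{lem:comultiplication-prime-words}. If $m_{C((\omega_{L})')} = m$, the $L$-part is $C((\omega_{L})')_{L} <_{\lOr} C(\omega_{L})$. If it is smaller, the last letter of the $L$-part is $<_{O} m$, so it is again below $C(\omega_{L})$.

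I expect the main obstacle to be this bookkeeping. Specifically, I must ensure that the Lemma~\ref{lem:zw-wz-I} rewriting does not alter $C$ of the $L$-parts, and that the case "$\omega'=\omega_{L}f$" catches exactly the terms whose $L$-part equals $\omega_{L}$.
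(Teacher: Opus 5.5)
Your proposal is correct and follows the paper's own argument. The paper likewise gets (a) from Lemma~\ref{lem:comultiplication-prime-words} (applied to the prime word $\omega_L\in\Xp$) together with Lemma~\ref{lem:zw-wz-I}, and gets (b) by expanding $\Delta(\omega_L)\Delta(\omega_R)$ and checking each left tensor factor; the paper only asserts this check, while you actually carry it out.

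One caveat: when $\omega_R\in\Xo$, your letter-origin argument does not yield $C(m_{(\omega_R)'})\le_O C(m_{\omega_R})$, because the cross terms $x'\in X_0$ of a degree-zero letter carry no order constraint, so that part of the stated bound can fail. However, (b) only uses $C(m_{(\omega_R)'})<_O C(m_{\omega})$, and this holds because every letter of $X_0$ precedes every letter of $X_+$. Also add to your list the left factor $\omega_R$ coming from $(1\otimes\omega_L)(\omega_R\otimes[1-r_{\omega_R}])$; it is handled exactly like $(\omega_R)'$.
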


\begin{proof}
Since $\omega_{L}$ is a non-empty prime word in $\Xp$, Part (a) follows directly from Lemmas~\ref{lem:zw-wz-I} and \ref{lem:comultiplication-prime-words}. For Part (b), expanding the product via the algebra homomorphism property yields:
\begin{align*}
\Delta(\omega) = & \  \Delta(\omega_{L}) \Delta(\omega_{R}) \\
\in & \left(\omega_{L} \otimes 1 + 1 \otimes \omega_{L} + \sum (\omega_{L})' \otimes (\omega_{L})''\right)\left(\omega_{R} \otimes [1-r_{\omega_{R}}] + 1 \otimes \omega_{R} + \sum (\omega_{R})' \otimes (\omega_{R})''\right) \\
& + I \otimes \K\X \\
\subseteq & \;\omega_{L}\omega_{R} \otimes [1-r_{\omega_{R}}] + \omega_{L} \otimes \omega_{R} + \sum \omega_{L} (\omega_{R})' \otimes (\omega_{R})'' + I \otimes \K\X \\
& + \omega_{R} \otimes \omega_{L} [1-r_{\omega_{R}}] + 1 \otimes \omega_{L} \omega_{R} + \sum (\omega_{R})' \otimes \omega_{L} (\omega_{R})'' \\
& + \sum (\omega_{L})' \omega_{R} \otimes (\omega_{L})'' [1-r_{\omega_{R}}] + \sum (\omega_{L})' \otimes (\omega_{L})'' \omega_{R} + \sum \sum (\omega_{L})' (\omega_{R})' \otimes (\omega_{L})'' (\omega_{R})''.
\end{align*}

A straightforward application of Part (a) and Lemma~\ref{lem:zw-wz-I} verifies that each left-tensor word $\omega'$ arising in the expansion above satisfies the stated structural constraints.
\end{proof}

\section{Words and Irreducible Letters}\label{sec:5}

In this section, we utilize the comultiplication framework established in Section~\ref{sec:4} to prove that every non-empty word can be expressed as a linear combination of products of degree-zero letters and positive-degree irreducible letters.

Throughout this section, we work within the framework of Lemma~\ref{lem:X-generates-H}, retaining all notations introduced in its proof. We begin by establishing a foundational reduction lemma for prime words. This result plays a central role in proving Lemmas~\ref{lem:beta-u-I}--\ref{lem:rl-sum-smaller-irrls} and Theorem~\ref{thm:rl-sum-lower-irrls}.

\begin{lemma}\label{lem:beta-qbeta-h-I}
Let $\beta$ be a prime word in $\Xp$. Suppose that there exist non-empty words $f, g \in \Xp\Xo$ and scalars $k_{f}, k_{g} \in \K$ such that
\begin{align}\label{formula:wr-fwr-g-I}
p_{\beta} := \beta + \sum_{f} k_{f} \beta f + \sum_{g} k_{g} g \in I, \qquad \text{with } C(m_{f}) <_{O} C(m_{\beta}) \text{ and } g_{L} <_{\X} \beta.
\end{align} 
Then there exist non-empty words $h \in \Xp\Xo$, non-empty words $q \in \Xo$, and scalars $k_{q}, k_{h} \in \K$ such that
\begin{align}\label{formula:beta-betaq-h-I}
\beta + \sum_{q} k_{q} \beta q +  \sum_{h} k_{h} h \in I, \qquad \text{with } h_{L} <_{\X} \beta.
\end{align}
\end{lemma}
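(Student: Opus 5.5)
The idea is to remove the terms $\beta f$ in which $f$ contains a positive-degree letter, leaving only tails $f=q\in\Xo$. We induct on the largest such ``bad'' $f$. Split the sum $\sum_f k_f\beta f$ into the part with $f\in\Xo$, which already has the desired shape, and the part with $f\notin\Xo$. In the second part, $f\in\Xp\Xo$ is non-empty and has a positive-degree letter $m_f$ with $C(m_f)<_O C(m_\beta)$. If this part is empty we are done, taking $h:=g$. Otherwise, among the finitely many bad $f$ with $k_f\neq 0$, choose the one for which $\beta f$ is greatest with respect to $<_{\X}$. We argue by transfinite induction, using the well-ordering of Lemma~\ref{lem:well-orders}, on this greatest word, or on the finite multiset of such words.

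The key tool is the comultiplication. Since $\Delta(I)\subseteq I\otimes\K\X+\K\X\otimes I$, we have $\Delta(p_\beta)\in I\otimes \K\X+\K\X\otimes I$. We expand each summand:
\begin{itemize}
\item $\Delta(\beta)$ by Lemma~\ref{lem:comultiplication-prime-words};
\item $\Delta(\beta f)$ by Lemma~\ref{lem:comultiplication-words-2}(b). Here $\mathrm{mrf}(\beta f)=(\beta,f)$ by Lemma~\ref{lem:properties-alpha_R}(d), because $C(f)(C(m_\beta))=0$ and $r(C(\beta))=C(m_\beta)$.
\item $\Delta(g)$ similarly, via Lemma~\ref{lem:comultiplication-words-2}, which shows that its left tensor factors have $L$-part $\le_{\X} g_L<_{\X}\beta$.
\end{itemize}
We then collect the terms whose left tensor factor is exactly $\beta$. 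Modulo $I\otimes\K\X$, these contributions are: $1\otimes$-type terms, which we discard; $\beta\otimes 1$ from $\beta$; and $\beta\otimes f$ from each $\beta f$, since $\omega_L\otimes\omega_R=\beta\otimes f$. The other terms either have left factor $\omega'$ with $C(\omega'_L)<_{\lOr}C(\beta)$, or have the form $\beta f'\otimes(\cdots)$ with $C(m_{f'})<_O C(m_\beta)$.

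Next we choose a linear functional $\phi$ on $\K\X/I$ that picks out the coefficient of the class of $\beta$. This uses the irreducible-word basis of Proposition~\ref{prop:irrws-form-basis}; we may assume $\beta$ is $I$-irreducible, since otherwise the statement is easy with $h$ taken from a reduction of $\beta$. Applying $\phi\otimes\id$ then yields a relation in $I$ among the right tensor factors:
\[
1+\sum_f k_f f+(\text{lower contributions})\in I.
\]
Here we must track the terms of the form $\beta f'$ carefully: they are handled by the induction hypothesis, or shown to be irreducible-reducible in a controlled way. Applying $\epsilon$ to this relation gives a contradiction unless the relation is absorbed. More usefully, the relation lets us express the largest bad $f$, modulo $I$, in terms of $<_{\X}$-smaller words in $\Xp\Xo$ or words in $\Xo$. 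Left-multiplying by $\beta$, using Lemma~\ref{lem:ac<bc}, and substituting back into $p_\beta$ replaces $\beta f$ by smaller terms. Terms $\beta u$ where $u$ has a letter with original element $\ge_O C(m_\beta)$ have $L$-part $<_{\X}\beta$ and are absorbed into the $h$-sum; left multiplications by $X_0$ are moved to the right using Lemma~\ref{lem:zw-wz-I}.

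The main obstacle is the extraction step: showing that the coefficient of $\beta\otimes(\cdot)$ isolates precisely $\sum_f k_f f$, up to controlled errors. This requires proving that no other left factor equals $\beta$ modulo $I$. Words $\beta f'$ with lower tails can reduce to combinations involving $\beta$, so one must argue that any word with $C(\omega'_L)<_{\lOr}C(\beta)$ expands only into irreducible words whose $L$-part is smaller than $\beta$. My first attempt would be to prove this by induction on $\beta$ itself in the well-order $<_{\X}$, treating the statement as a simultaneous induction with the claim that such reductions never produce $\beta$.
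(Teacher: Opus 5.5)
Your proposal has a genuine gap. The case you treat as the main one (irreducible $\beta$) is vacuous, and the case you set aside as easy is the entire content of the lemma.

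Under the hypothesis, every tail $f$ is non-empty and each of its letters has original element $\le_O C(m_f)<_O C(m_\beta)$. Hence $r(C(\beta f))<_O C(m_\beta)=r(C(\beta))$, the equality holding because $\beta$ is prime, and so $\beta f<_{\X}\beta$. Also $g\le_{\X}g_L<_{\X}\beta$. Therefore $\LW(p_\beta)=\beta$, so $\beta$ is \emph{always} $I$-reducible. Your $\epsilon$-contradiction is detecting exactly this: for irreducible $\beta$ your functional would give $1+\sum_f k_f f\in I\subseteq\ker\epsilon$.

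In the only real case, ``taking $h$ from a reduction of $\beta$'' does not work. A reduction gives $\beta\equiv\sum c_w w$ modulo $I$ with $w<_{\X}\beta$, but this controls $w$, not $w_L$. The hypothesis is itself such a reduction, with $(\beta f)_L=\beta$. Passing from $w<_{\X}\beta$ to $h_L<_{\X}\beta$, up to tails in $\Xo$, is precisely what must be proved.

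The extraction step you flag is also impossible, not merely hard. Any functional with $\phi(I)=0$ and $\phi(\beta)=1$ satisfies $1=-\sum_f k_f\phi(\beta f)-\sum_g k_g\phi(g)$. So it cannot vanish on the left factors $\beta f$ and $g$, which occur in $\Delta(p_\beta)$ through $\beta f\otimes[1-r_f]$ and $g\otimes[1-r_g]$. Modulo $I$, $\beta$ really does lie in the span of the other left factors.

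The missing idea is to put the functional on the \emph{right} tensor factor. This produces a new relation among left factors of the same shape as the hypothesis. The terms of $\Delta(p_\beta)$ with left factor exactly $\beta$ sum to $\beta\otimes p_1$ with $p_1:=1+\sum_f k_f f$, whereas $\beta f$ carries the right factor $[1-r_f]\in\K\Xo$. Fix $f_0$.

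\emph{First case.} Suppose $[1-r_{f_0}]\in\K p_1+I$. Applying $\epsilon$ gives $p_1\equiv[1-r_{f_0}]$ modulo $I$. Since $p_\beta=\beta p_1+\sum_g k_g g$ and $I$ is an ideal, $\beta[1-r_{f_0}]+\sum_g k_g g\in I$, which is already the conclusion.

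\emph{Second case.} Otherwise, choose $\phi$ with $\phi(p_1)=1$ and $\phi(I)=\phi([1-r_{f_0}])=0$, and apply $\id\otimes\phi$ to $\Delta(p_\beta)\in I\otimes\K\X+\K\X\otimes I$. This keeps $\beta$ with coefficient $1$ and kills every $\beta f$ with $[1-r_f]\equiv[1-r_{f_0}]$. The tails with $[1-r_f]\not\equiv[1-r_{f_0}]$ survive with coefficient $k_f\phi([1-r_f])$. The remaining terms become new tails $\beta f'$, words with $L$-part $<_{\X}\beta$, or scalars, and the scalars vanish by $\epsilon$.

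The paper iterates this step and inducts on the maximal degree $d_1$ of the tails. Note that this route never needs $\beta$ to be irreducible, nor the irreducible-word basis of $\K\X/I$.
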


\begin{proof}
Let us define the index set of the first summation and its maximal component degree by
\begin{align*}
W_{1} := \{f \mid p_{\beta} \in I \text{ and } k_{f} \neq 0\}, \qquad d_{1} := \max \{\deg(f) \mid f \in W_{1}\}.
\end{align*}
If $d_{1} = 0$, the assertion holds immediately. Assume therefore that $d_{1} \ge 1$. Recall from Lemma~\ref{lem:comultiplication-words-2} that there exist non-empty words $\beta', f', g' \in \Xp\Xo$ and polynomials $\beta'', f'', g'' \in \K\X^{+}$ such that applying the comultiplication yields:
\begin{align*}
&\ \Delta\left(\beta + \sum_{f \in W_{1}} k_{f} \beta f + \sum_{g} k_{g} g\right) \\
\in & \ \beta \otimes 1 + 1 \otimes \beta + \sum \beta' \otimes \beta''  \\
& + \sum_{f \in W_{1}} k_{f} \left(\beta \otimes 1 + 1 \otimes \beta + \sum \beta' \otimes \beta''\right)\left(f \otimes [1-r_{f}] + 1 \otimes f + \sum f' \otimes f''\right) \\
& + \sum k_{g} \left(g \otimes [1-r_{g}] + 1 \otimes g + \sum g' \otimes g''\right) + I \otimes \K\X \\
\subseteq & \ \beta \otimes 1 + 1 \otimes \beta + \sum \beta' \otimes \beta'' \\
& + \sum_{f \in W_{1}} k_{f} \Big(\beta f \otimes [1-r_{f}] + \beta \otimes f + \sum \beta f' \otimes f'' \\
& + f \otimes \beta [1-r_{f}] + 1 \otimes \beta f + \sum f' \otimes \beta f'' \\
& + \sum \beta' f \otimes \beta'' [1-r_{f}] + \sum \beta' \otimes \beta'' f + \sum \sum \beta' f' \otimes \beta'' f''\Big) \\
& + \sum k_{g} \left(g \otimes [1-r_{g}] + 1 \otimes g + \sum g' \otimes g''\right) + I \otimes \K\X,
\end{align*}
where $C((\beta')_{L}) <_{\lOr} C(\beta)$, $C(m_{f'}) \le_{O} C(m_{f}) <_{O} C(m_{\beta})$, $(g')_{L} \le_{\X} g_{L} <_{\X} \beta$, and $\deg(f') < d_{1}$.

Let $p_{1} := 1 + \sum_{f\in W_{1}} k_{f} f$. Fix an element $f_{0} \in W_{1}$, and partition the set $W_{1}$ into
\begin{align*}
W_{f_{0}} := \{f \in W_{1} \mid [1-r_{f}] \in [1-r_{f_{0}}] + I\}, \qquad W_{1}' := W_{1} \setminus W_{f_{0}}.
\end{align*}
We now proceed by analyzing the following two cases.

\medskip
\noindent
\textit{Case 1}: Suppose $[1-r_{f_{0}}] \in \K p_{1} + I$. By applying the canonical augmentation map $\epsilon$, we deduce that $p_{1} \in [1-r_{f_{0}}] + I$. Substituting this relation back into our initial hypothesis directly yields
\begin{align*}
\beta[1-r_{f_{0}}] + \sum_{g} k_{g} g \in I,
\end{align*}
which provides the desired form.

\medskip
\noindent
\textit{Case 2}: Suppose $[1-r_{f_{0}}] \notin \K p_{1} + I$. Then the sum $\K p_{1} + I + \K [1-r_{f_{0}}]$ is direct. We can therefore decompose the free algebra as $\K\X = \K p_{1} \oplus I \oplus \K [1-r_{f_{0}}] \oplus U$ for some choosing complement subspace $U \subseteq \K\X$. We define a linear functional $\phi: \K\X \rightarrow \K$ by setting:
\begin{align*}
\phi(p_{1}) = 1, \quad \phi(I) = \phi(U) = 0, \quad \text{and} \quad \phi([1-r_{f_{0}}]) = 0.
\end{align*}

Applying the linear map $\id \otimes \phi$ to the containment $\Delta(p_{\beta}) \in I \otimes \K\X + \K\X \otimes I$, we find that there exist non-empty words $g_{1} \in \Xp\Xo$ and scalars $k_{g_{1}} \in \K$ such that
\begin{align}\label{formula:beta-betaf-g1}
\beta + \sum_{f \in W_{1}'} k_{f} \beta f \phi([1-r_{f}]) + \sum_{f\in W_{1}}\sum k_{f} \beta f' \phi(f'') + \sum_{g_{1}} k_{g_{1}} g_{1} \in I, \qquad \text{with } (g_{1})_{L} <_{\X} \beta. 
\end{align}
By construction, the strict inequality $|W_{1}'| < |W_{1}|$ holds. By iterating this reduction process, we can systematically eliminate the second term in equation~\eqref{formula:beta-betaf-g1}. This ensures the existence of non-empty words $w, g_{2} \in \Xp\Xo$ and scalars $k_{w}, k_{g_{2}} \in \K$ such that
\begin{align}\label{formula:beta-betaf-g2}
\beta + \sum_{w} k_{w} \beta w + \sum_{g_{2}} k_{g_{2}} g_{2} \in I, 
\end{align}
where $\deg(w) < d_{1}$, $C(m_{w}) <_{O} C(m_{\beta})$, and $(g_{2})_{L} <_{\X} \beta$.

Now, let us define the new supporting index set:
\begin{align*}
W_{2} := \{w \mid w \text{ appears in } \eqref{formula:beta-betaf-g2} \text{ and } k_{w} \neq 0\}, \qquad d_{2} := \max \{\deg(w) \mid w \in W_{2}\}.
\end{align*}
Since $d_{2} < d_{1}$, a standard induction argument on the degree $d_{1}$ allows us to transition from equation~\eqref{formula:beta-betaf-g2} to the final required relation~\eqref{formula:beta-betaq-h-I}, completing the proof.
\end{proof}

Recall the canonical projection $\pi:\K\X \to H$ defined in Lemma~\ref{lem:X-generates-H}. For any $p \in \K\X$, we denote its image under $\pi$ by $\overline{p} := \pi(p)$.

Utilizing Lemma~\ref{lem:beta-qbeta-h-I}, we establish a stronger reduction result for prime words:

\begin{lemma}\label{lem:beta-u-I}
Let $\beta$ be a prime word in $\Xp$. Suppose that there exist non-empty words $h \in \Xp\Xo$, $q \in \Xo$, and scalars $k_{h}, k_{q} \in \K$ such that
\begin{align*}
\beta + \sum_{q} k_{q} \beta q + \sum_{h} k_{h} h \in I, \qquad \text{with } h_{L} <_{\X} \beta.
\end{align*}
Then there exist non-empty words $\mu \in \Xp\Xo$ and scalars $k_{\mu} \in \K$ such that 
\begin{align*}
\beta \in \sum k_{\mu} \mu + I, \qquad \text{with } \mu_{L} <_{\X} \beta.
\end{align*}
\end{lemma}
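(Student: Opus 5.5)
The idea is to absorb the degree-zero tail $\sum_q k_q \beta q$ into $\beta$ by multiplying on the right by an inverse. Group the hypothesis as
\begin{align*}
\beta\Bigl(1+\sum_{q} k_{q} q\Bigr) + \sum_{h} k_{h} h \in I .
\end{align*}
Put $u := 1+\sum_q k_q q \in \K\Xo$. Since every $q \in \Xo$ is non-empty and $X_0 \subseteq \ker\epsilon$, we have $\epsilon(u)=1$. Moreover $\overline{u}=\pi(u)$ lies in the Hopf coradical $H_{[0]}$, because $f_X(X_0)$ spans $\ker\epsilon_{H_{[0]}}$.

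The key step is to show that $\overline{u}$ is invertible in $H$, or at least that $\overline{\beta}$ can be recovered from $\overline{\beta}\,\overline{u}$ modulo the span of images of words $\mu$ with $\mu_L<_{\X}\beta$. I would first try to use the coalgebra structure, repeating the trick of Lemma~\ref{lem:beta-qbeta-h-I}: apply $\Delta$ to the relation and then a suitable functional on the right tensor factor. By Lemma~\ref{lem:comultiplication-prime-words} we have $\Delta(\beta)\in\beta\otimes 1+1\otimes\beta+\sum\beta'\otimes\beta''$ with $(\beta')_L<_{\X}\beta$. By the skew-triangular form, $\Delta(u)\in u\otimes(\ast)+1\otimes u+\sum u'\otimes u''$, where all words lie in $\K\Xo$. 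The terms of $\Delta(\beta u)$ whose left factor has $\beta$ as prefix are therefore of the form $\beta v\otimes v'$ with $v,v'\in\K\Xo$, while every other left factor has leading prefix $<_{\X}\beta$. This holds by Lemma~\ref{lem:comultiplication-words-2}(b) for the $h$-terms and by Lemma~\ref{lem:zw-wz-I} for commuting $\Xo$ past positive letters. Choose a linear functional $\phi$ on $\K\X$ that vanishes on $I$, sends $1$ to $1$, and kills the remaining degree-zero parts. Then $(\id\otimes\phi)\Delta(\cdot)$ produces a relation $\beta+\sum k'_q\beta q'+(\text{lower})\in I$ with a strictly smaller (or more controllable) set of tail words.

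This alone may not terminate, so the cleaner route is the following. Consider the finite-dimensional subcoalgebra $D$ of $H_{[0]}$ generated by $\overline{u}$, which exists by local finiteness. Since $\overline{\beta u}$ is congruent to a lower combination, the same holds for $\overline{\beta}\,\overline{u}\,d$ for every $d$ in the subalgebra generated by $D$ and $S_H(D)$, provided we can show that right multiplication by elements of $\Xo$ preserves ``lower''. That is exactly what Lemma~\ref{lem:zw-wz-I} and the left-compatibility Lemma~\ref{lem:ac<bc} give, since $\mu\mapsto\mu a$ keeps $\mu_L$ when $a\in X_0$. We then use the Hopf identity $\sum \overline{u}_{(1)}S_H(\overline{u}_{(2)})=\epsilon(u)=1$. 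Expanding $\overline{\beta}=\overline{\beta}\,\overline{u}_{(1)}S(\overline{u}_{(2)})$, each $\overline{\beta}\,\overline{u}_{(1)}$ must itself be shown lower modulo $\beta$-terms. This is obtained from the $(\id\otimes\phi)\Delta$ relation above with $\phi$ ranging over a dual basis of $D$. The result is $\overline{\beta}\in\sum k_\mu\overline{\mu}$ with $\mu_L<_{\X}\beta$, which is the statement after lifting to $\K\X$ modulo $I$.

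The main obstacle is this middle step. One has to extract from $\Delta(\beta u+\sum k_h h)\in I\otimes\K\X+\K\X\otimes I$ a family of relations $\beta\,u_{(1)}\langle\phi,u_{(2)}\rangle\equiv\text{lower}$ for all $\phi$, while keeping control of the ``lower'' terms coming from $h$ and $\beta'$. The prefix bookkeeping $(\cdot)_L<_{\X}\beta$ must survive right multiplication by $\Xo$ and the antipode of degree-zero elements. I expect Lemma~\ref{lem:zw-wz-I} together with Lemma~\ref{lem:properties-alpha_R}(d) to handle this, since appending degree-zero letters never changes the mr-factorization's left part.
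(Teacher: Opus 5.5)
Your final route is correct and is essentially the paper's argument. You pass to $H$ and apply $\Delta_H$ to $\overline{\beta}\,\overline{u}+\sum_h k_h\overline{h}=0$. You then evaluate the right tensor factor against functionals dual to linearly independent right factors of $\Delta_H(\overline{u})=\sum_i v'_i\otimes v''_i$; the paper uses a minimal-rank decomposition where you use a basis of $D$. This expresses each $\overline{\beta}v'_i$ through images of words $\eta z$ with $\eta_L<_{\X}\beta$ and $z\in\Xo$, plus elements of $H_{[0]}$. Finally you right-multiply by $S_H(v''_i)\in H_{[0]}$ and sum, using $\sum_i v'_iS_H(v''_i)=\epsilon(u)=1$.

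The invertibility idea and the first iterative route are unnecessary, and the obstacle you flag is routine. Only right multiplication by degree-zero elements occurs, and this leaves $\mu_L$ unchanged by Lemma~\ref{lem:properties-alpha_R}(d). Lemma~\ref{lem:zw-wz-I} and the left-compatibility lemma are not needed here. The constant term coming from $H_{[0]}$ is removed by applying $\epsilon$.
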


\begin{proof}
Applying the canonical projection $\pi$ to the given relation yields the following identity in $H$:
\begin{align*}
\overline{\beta} \left(\overline{1} + \sum_{q} k_{q} \overline{q}\right) + \sum_{h} k_{h} \overline{h} = 0.
\end{align*}
Let us define the right-hand factor and its coproduct decomposition by
\begin{align*}
v := \overline{1} + \sum_{q} k_{q} \overline{q}, \quad r := \rank(\Delta_{H}(v)), \quad \Delta_{H}(v) = \sum_{i=1}^{r} v'_{i} \otimes v''_{i},
\end{align*}
and denote the set of right tensor factors by $V'' := \{v''_{1}, \ldots, v''_{r}\}$. By Lemma~\ref{lem:comultiplication-words-2}, there exist non-empty words $\beta', h' \in \Xp\Xo$ and polynomials $\beta'', h'', r_{h} \in \K\X^{+}$ such that applying the coproduct $\Delta_{H}$ gives:
\begin{align*}
& \  \Delta_{H}\left(\overline{\beta} v + \sum_{h} k_{h} \overline{h}\right) \\
=& \  \left(\overline{\beta} \otimes \overline{1} + \overline{1} \otimes \overline{\beta} + \sum \overline{\beta'} \otimes \overline{\beta''}\right)\left(\sum_{i=1}^{r} v'_{i} \otimes v''_{i}\right) + \sum_{h} k_{h} \left(\overline{h} \otimes \overline{[1 - r_{h}]} + \sum \overline{1} \otimes \overline{h} + \sum \overline{h'} \otimes \overline{h''}\right)\\
=& \  \sum_{i=1}^{r} \overline{\beta} v'_{i} \otimes v''_{i} + \sum_{i=1}^{r} v'_{i} \otimes \overline{\beta} v''_{i} + \sum_{i=1}^{r} \overline{\beta'} v'_{i} \otimes \overline{\beta''} v''_{i} \\
& + \sum_{h} k_{h} \left(\overline{h} \otimes \overline{[1 - r_{h}]} + \sum \overline{1} \otimes \overline{h} + \sum \overline{h'} \otimes \overline{h''}\right),
\end{align*}
where $(\beta')_{L} <_{\X} \beta$, and $(h')_{L} \le_{\X} h_{L} <_{\X} \beta$.

Recall from \cite[Lemma 1.2.2]{Ra2012} that the set $V''$ is linearly independent in $H$. We can therefore decompose the Hopf algebra as $H = \left(\bigoplus_{i=1}^{r} \K v''_{i}\right) \oplus U$ for a choice of complement subspace $U \subseteq H$. For each $v''_{i} \in V''$, we define a linear functional $\phi_{v''_{i}} : H \rightarrow \K$ by setting:
\begin{align*}
\phi_{v''_{i}} (v''_{j}) = \delta_{i, j} \quad \text{for } v''_{j} \in V'', \qquad \text{and} \qquad \phi_{v''_{i}}(U) = 0. 
\end{align*}
Applying the linear map $\id \otimes \phi_{v''_{i}}$ to the identity $\Delta_{H}\left( \overline{\beta} v + \sum k_{h} \overline{h}\right) = 0$, we find that there exist, depending on $v''_{i}$, elements $\eta \in \Xp^{+}$, $u, y \in H_{[0]}$, and scalars $k_{\overline{\eta} u}, k_{y} \in \K$ such that 
\begin{align*}
\overline{\beta} v'_{i} + \sum k_{\overline{\eta} u} \overline{\eta} u + \sum k_{y} y = 0, \qquad \text{with } \eta_{L} <_{\X} \beta.
\end{align*}   
Right-multiplying this equation by $S_{H}(v''_{i})$ and summing over all $i$ yields:
\begin{align*}
\sum_{i=1}^{r} \overline{\beta} v'_{i} S_{H}(v''_{i}) + \sum_{i=1}^{r} \sum k_{\overline{\eta} u} \overline{\eta} u S_{H}(v''_{i}) + \sum_{i=1}^{r} \sum k_{y} y S_{H}(v''_{i}) = 0.
\end{align*}
Since $\epsilon_{H}(v) = 1$, the antipode property guarantees that $\sum_{i=1}^{r} v'_{i}S_{H}(v''_{i}) = 1$. The identity simplifies to:
\begin{align*}
\overline{\beta} + \sum_{i=1}^{r} \sum k_{\overline{\eta} u} \overline{\eta} u S_{H}(v''_{i}) + \sum_{i=1}^{r} \sum k_{y} y S_{H}(v''_{i}) = 0.
\end{align*}
Lifting this back to the free algebra and applying the augmentation map $\epsilon$, we deduce the existence of words $z_{1} \in \Xo$, $z_{2} \in \Xo^{+}$, and scalars $k_{\eta z_{1}}, k_{z_{2}} \in \K$ such that 
\begin{align*}
\beta + \sum k_{\eta z_{1}} \eta z_{1} + \sum k_{z_{2}} z_{2} \in I,
\end{align*}
where $(\eta z_{1})_{L} = \eta_{L} <_{\X} \beta$, and $(z_{2})_{L} <_{\X} \beta$. Setting $\mu$ to be the words appearing in the lower-order terms completes the proof.
\end{proof}

Now, we demonstrate that any reducible letter of positive degree can be factored modulo $I$ into a linear combination of products of letters of strictly smaller order.

\begin{lemma}\label{lem:rl-sum-smaller-irrls}
Let $x$ be an $I$-reducible letter in $X_{+}$. Then there exist letters $x_{1}, \ldots, x_{n} \in X$ and scalars $k_{x_{1}\cdots x_{n}} \in \K$ such that 
\begin{align*}
x \in \sum k_{x_{1}\cdots x_{n}} x_{1} \cdots x_{n} + I, \qquad \text{with } x_{1}, \ldots, x_{n} <_{X} x.
\end{align*}
\end{lemma}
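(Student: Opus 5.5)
Since $x$ is $I$-reducible, there is a polynomial $p\in I$ with $\LW(p)=x$. After normalizing, we may write
\begin{align*}
x+\sum_{\alpha} k_{\alpha}\alpha \in I, \qquad \alpha <_{\X} x .
\end{align*}
The plan is to bring this relation into the shape required by Lemma~\ref{lem:beta-qbeta-h-I} with $\beta = x$, which is a prime word by the remark after the definition of prime words. Then Lemma~\ref{lem:beta-u-I} produces $x\in\sum k_{\mu}\mu + I$ with $\mu_{L}<_{\X} x$, and from this we read off that every letter of every $\mu$ is $<_{X} x$.

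\textbf{Step 1: normalize the lower terms.} The first task is to rewrite each $\alpha$ modulo $I$ as a combination of words in $\Xp\Xo$, or in $\Xo$. To do this, we move degree-zero letters to the right past positive-degree letters using Lemma~\ref{lem:zw-wz-I}, and we collapse products of degree-zero letters using Remark~\ref{rmk:ab-c-I}. Lemma~\ref{lem:zw-wz-I} replaces $a\omega$ by words whose $C$-image of the $L$-part equals $C(\omega)$, which is strictly smaller. We therefore expect each rewriting step to decrease in $<_{\X}$, and well-ordering (Lemma~\ref{lem:well-orders}) to guarantee termination.

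\textbf{Step 2: sort terms by their last letter.} Since $\alpha<_{\X} x$, Corollary~\ref{cor:well-orders-compatibility-2} (applied via $C(x)\in O$ together with the order on $C_{C(x)}$) controls $r(C(\alpha))$. Every term $g$ with $g_{L}<_{\X} x$ already fits the $g$-terms of \eqref{formula:wr-fwr-g-I}. Any remaining term has $L$-part $\ge x$, and we want to show it has the form $x f$ with $C(m_f)<_O C(x)$. Lemma~\ref{lem:properties-alpha_R}(d) should help here: a word $\alpha<_{\X}x$ whose $L$-part is $\ge x$ must begin with a letter of original element $C(x)$ and have $r(C(\alpha))=C(x)$. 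I expect such terms to be either absent or exactly of the form $xf$.

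\textbf{Step 3: apply the two lemmas and conclude.} With the relation in the form \eqref{formula:wr-fwr-g-I}, Lemma~\ref{lem:beta-qbeta-h-I} and then Lemma~\ref{lem:beta-u-I} give $x\in\sum k_\mu \mu+I$ with $\mu_L<_{\X} x$. For $\mu=\mu_L\mu_R$, Lemma~\ref{lem:properties-alpha_R}(b),(e) says $\mu_L$ is the greatest factor of $\mu$. Hence every letter $y$ of $\mu$ satisfies $y\le_{\X}\mu_L<_{\X}x$, and Lemma~\ref{lem:compatibility-orders}(c) turns this into $y<_X x$.

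\textbf{Main obstacle.} The hard part is Step 2. Because $x$ may be $y_b$ rather than the original letter $y$, a term like $y_{c}$ with $c<b$, or a word ending in $y$ with smaller mlex, could have $L$-part not below $x$ in the required sense. My first attempt would be to use the greatest-factor property: any factor of $\alpha$ is $\le_{\X}\alpha<_{\X}x$, so in fact $\alpha_L\le\alpha<x$ always. If that holds, every term is already a $g$-term, and Step 2 becomes trivial.
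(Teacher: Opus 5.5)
Your Step~1 matches the paper's normalization into $\Xp\Xo$, and your Step~3 is how the paper finishes once every $L$-part is below $x$. The gap is in Step~2, and your proposed rescue has the inequality backwards. Lemma~\ref{lem:properties-alpha_R}(e) says $\alpha_L$ is the \emph{greatest} factor of $\alpha$. So $\alpha\le_{\X}\alpha_L$, not $\alpha_L\le_{\X}\alpha$, and the inequality is often strict.

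Because $<_{\X}$ first compares the last letter of $C(\alpha)$, every word ending in a degree-zero letter lies below every $x\in X_+$. Take $a\in O_0$ and $z\in O_+$ with $z>_O C(x)$. Then $za\in\Xp\Xo$ and $za<_{\X}x$, but $(za)_L=z>_{\X}x$. This word does not begin with a letter of original element $C(x)$, and $r(C(za))=a\neq C(x)$, so your structural claim in Step~2 is false. Nothing in $\LW(p)=x$ excludes such terms; eliminating them is the actual content of the lemma. (Your worry about $y_c$ with $c<_O b$ is harmless, since $(y_c)_L=y_c<_{\X}y_b$ already makes it a $g$-term.)

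Terms like $za$ are neither of the form $xf$ with $C(m_f)<_O C(x)$, nor do they satisfy $g_L<_{\X}x$. So Lemma~\ref{lem:beta-qbeta-h-I} cannot be applied with $\beta=x$. You cannot simply carry them along either: that lemma's proof needs the $g$-terms, after applying $\Delta$, to have $L$-parts strictly below $\beta$.

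The missing idea is to apply Lemmas~\ref{lem:beta-qbeta-h-I} and \ref{lem:beta-u-I} not to $x$, but to $\beta:=\max\{\omega_L\mid\omega\in W\}$, where $W$ is the support of the normalized relation. You then argue by well-founded induction on $\beta$.
\begin{itemize}
\item If $\beta<_{\X}x$, you conclude as in your Step~3.
\item If $\beta\ge_{\X}x$, every term $\beta\alpha$ has $\alpha\neq 1$, because its last letter has original element below $C(x)\le_O r(C(\beta))$.
\item Choose a functional $\varphi$ with $\varphi(\sum k_{\beta\alpha}\alpha)=1$ and $\varphi(1)=\varphi(I)=0$. Apply $\id\otimes\varphi$ to the containment
\[
\Delta\Bigl(\sum k_{\beta\alpha}\beta\alpha+\sum k_\omega\omega-x\Bigr)\in I\otimes\K\X+\K\X\otimes I .
\]
Use Lemma~\ref{lem:comultiplication-words-2} and the skew-triangularity of $\Delta(x)$. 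The latter contributes only $x\otimes 1$, which $\varphi$ kills, and terms with $C(x'_+)<_O C(x)$, which are lower.
\item This yields a relation $\beta+\sum k_f\beta f+\sum k_g g\in I$ with $C(m_f)<_O C(m_\beta)$ and $g_L<_{\X}\beta$. This is exactly the hypothesis of Lemma~\ref{lem:beta-qbeta-h-I}, but for $\beta$ rather than $x$.
\item The two lemmas then give $\beta\in\sum k_\mu\mu+I$ with $\mu_L<_{\X}\beta$.
\item Substitute this into the terms $\beta\alpha$ and renormalize with Lemma~\ref{lem:zw-wz-I}. Since $(\mu\alpha)_L<_{\X}\beta$, the maximal $L$-part strictly drops, and the induction closes.
\end{itemize}
The paper also treats separately the case where $\beta$ is the least positive letter $\ell$, and the case $x=y_a\notin O$. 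In the latter, the letters of $C_{C(x)}$ below $x$ are split off before running the same argument.
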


\begin{proof}
If $x \in I$, the assertion holds trivially. Assume therefore that $x \notin I$. We proceed via case analysis based on the structural type of the letter $x$.

\medskip
\noindent
\textit{Case 1}: Suppose $x \in O_{+}$. By Lemma~\ref{lem:zw-wz-I} and application of the canonical augmentation map $\epsilon$, there exist non-empty words $\omega \in \Xp \Xo$ and scalars $k_{\omega} \in \K$ such that
\begin{align*}
x \in \sum k_{\omega} \omega + I, \qquad \text{with } \omega <_{\X} x,
\end{align*}
or equivalently, by Corollary~\ref{cor:well-orders-compatibility-2},
\begin{align*}
x \in \sum k_{\omega} \omega + I, \qquad \text{with } C(r(\omega)) <_{O} x.
\end{align*}
Let us define the support set and its associated prime word set by
\begin{align*}
W := \{\omega \mid x \in \sum k_{\omega} \omega + I \text{ and } k_{\omega} \neq 0\}, \qquad W_{L} := \{\omega_{L} \mid \omega \in W\}.
\end{align*}
Let $\beta$ denote the greatest word of $W_{L}$ with respect to the order $<_{\X}$. We can thus isolate the leading structural components of $x$ as
\begin{align*}
x \in \sum_{\beta\alpha \in W} k_{\beta \alpha} \beta \alpha + \sum_{\substack{\omega \in W \\ \omega_{L} <_{\X} \beta}} k_{\omega} \omega + I, \qquad \text{where } \beta \in \Xp,\ C(r(\beta\alpha)), C(r(\omega)) <_{O} x.
\end{align*}
Without loss of generality, we may assume that the leading sum satisfies $\sum_{\beta \alpha \in W} k_{\beta \alpha} \beta \alpha \notin I$. 

Since $(\X, <_{\X})$ is well-ordered by Lemma~\ref{lem:well-orders}, we can proceed by induction on $\beta$ with respect to $<_{\X}$. If $\beta <_{\X} x$, then Lemma~\ref{lem:properties-alpha_R}~(e) implies that for every word $\omega \in W$ and each constituent letter $b$ of $\omega$, the inequality $b \le_{\X} \omega_{L} \le_{\X} \beta <_{\X} x$ holds. Lemma~\ref{lem:compatibility-orders} then ensures that $b <_{X} x$, completing this subcase.

Conversely, assume that 
\begin{align*}
\beta \ge_{\X} x \qquad (\text{which translates to } C(r(\beta)) \ge_{O} x \text{ by Corollary~\ref{cor:well-orders-compatibility-2}}).
\end{align*}
This implies that $\alpha \neq 1$ for every word $\beta \alpha \in W$, because $C(r(\beta\alpha)) <_{O} x$.

Let $\ell$ denote the minimal non-empty word in $\Xp$, which corresponds to the least letter in $X_{+}$ under $<_{X}$ via Lemma~\ref{lem:x-least}~(b). If $\beta = \ell$, then $x \in \sum k_{\ell\alpha} \ell\alpha + \sum k_{\omega} \omega + I$, where $\alpha \in \Xo$ and $\omega \in \Xo^{+}$. Since $\ell \ge_{\X} x$ and $x \in X_{+}$, it must be that $x = \ell$, which gives $\ell - \sum k_{\ell\alpha} \ell\alpha - \sum k_{\omega} \omega \in I$. Lemma~\ref{lem:beta-u-I} then guarantees the existence of non-empty words $u \in \Xp\Xo$ and scalars $k_{u} \in \K$ such that 
\begin{align*}
\ell \in \sum k_{u} u + I, \qquad \text{with } u_{L} <_{\X} \ell.
\end{align*}
This forces $u \in \Xo^{+}$, matching the claim.

Now, suppose $\beta >_{\X} \ell$. By Lemma~\ref{lem:comultiplication-words-2} and the comultiplication on $X$, there exist elements $x'_{+} \in X_{+}^{*}$, $x'_{0} \in X_{0}^{*}$, non-empty words $(\beta\alpha)', \omega' \in \Xp\Xo$, and polynomials $x'', (\beta\alpha)'', \omega'' \in \K\X^{+}$ such that 
\begin{align*}
& \ \Delta\left( \sum_{\beta\alpha \in W} k_{\beta\alpha} \beta\alpha + \sum_{\substack{\omega \in W \\ \omega_{L} <_{\X} \beta}} k_{\omega} \omega - x \right) \\
\in & \sum_{\beta\alpha \in W} k_{\beta\alpha} \left(\beta\alpha \otimes [1-r_{\alpha}] + \beta \otimes \alpha + 1 \otimes \beta\alpha + \sum (\beta\alpha)' \otimes (\beta\alpha)''\right) \\
& + \sum_{\substack{\omega \in W \\ \omega_{L} <_{\X} \beta}} k_{\omega}\left(\omega \otimes [1-r_{\omega}] + 1 \otimes \omega + \sum \omega' \otimes \omega''\right) \\
& - \left(x \otimes 1 + 1 \otimes x + \sum x'_{+}x'_{0} \otimes x''\right) + I \otimes \K\X,
\end{align*}
where $x'_{+}x'_{0} \neq 1$, $(\beta\alpha)' \neq 1$, $C((x'_{+}x'_{0})_{L}) = C(x'_{+}) <_{O} C(x)$, $(\omega')_{L} \le_{\X} \omega_{L} <_{\X} \beta$, and either $C(((\beta\alpha)')_{L}) <_{\lOr} C(\beta)$ or $(\beta\alpha)' = \beta c$ for some $c \in \X^{+}$ satisfying $C(m_{c}) <_{O} C(m_{\beta})$.

Note that the subspace $\K (\sum_{\beta\alpha \in W} k_{\beta\alpha} \alpha) + I$ forms a direct sum because $\sum_{\beta\alpha \in W} k_{\beta\alpha}\alpha \notin I$. Since $\alpha \neq 1$ for all $\beta\alpha \in W$, the application of the augmentation map $\epsilon$ shows that the sum $\K (\sum_{\beta\alpha \in W} k_{\beta\alpha} \alpha) + I + \K 1$ is also direct. We can therefore choose a complement decomposition of the free algebra of the form $\K\X = \K (\sum_{\beta\alpha \in W} k_{\beta\alpha} \alpha) \oplus I \oplus \K 1 \oplus V$ for some subspace $V \subseteq \K\X$. 
Define a linear functional $\varphi : \K\X \rightarrow \K$ by setting:
\begin{align*}
\varphi\left(\sum_{\beta\alpha \in W} k_{\beta\alpha} \alpha\right) = 1, \quad \varphi(1) = 0, \quad \text{and} \quad \varphi(I) = \varphi(V) = 0.
\end{align*}
Applying the linear map $\id \otimes \varphi$ to the containment relation for the coproduct inside $I \otimes \K\X + \K\X \otimes I$, we find that there exist non-empty words $f, g \in \Xp\Xo$ and scalars $k_{f}, k_{g} \in \K$ satisfying
\begin{align*}
\beta + \sum k_{f} \beta f + \sum k_{g} g \in I, \qquad \text{with } C(m_{f}) <_{O} C(m_{\beta}) \text{ and } g_{L} <_{\X} \beta.
\end{align*}

By invoking Lemmas~\ref{lem:beta-qbeta-h-I} and \ref{lem:beta-u-I}, there exist non-empty words $\mu \in \Xp\Xo$ and scalars $k_{\mu} \in \K$ such that 
\begin{align*}
\beta \in \sum k_{\mu} \mu + I, \qquad \text{with } \mu_{L} <_{\X} \beta.
\end{align*}
We can now rewrite the element $x$ modulo $I$ as follows:
\begin{align*}
x \in \sum_{\beta\alpha \in W} \sum_{\mu} k_{\beta \alpha} k_{\mu} \mu \alpha + \sum_{\substack{\omega \in W \\ \omega_{L} <_{\X} \beta}} k_{\omega} \omega + I, 
\end{align*}
where $C(r( \mu \alpha)), C(r(\omega)) <_{O} x$, and $\mu_{L} <_{\X} \beta$. Since $\mu_{L} <_{\X} \beta$ and $C(m_{\alpha}) <_{O} C(m_{\beta})$, it follows that $(\mu\alpha)_{L} <_{\X} \beta$. Applying Lemma~\ref{lem:zw-wz-I}, we establish that there exist non-empty words $\nu \in \Xp\Xo$ and scalars $k_{\nu} \in \K$ such that 
\begin{align*}
x \in \sum k_{\nu} \nu + I, \qquad \text{with } \nu_{L} <_{\X} \beta.
\end{align*}
By our induction hypothesis on $\beta$ (since $\nu_{L} <_{\X} \beta$), there exist letters $x_{1}, \ldots, x_{n} \in X$ and scalars $k_{x_{1}\cdots x_{n}} \in \K$ such that
\begin{align*}
x \in \sum k_{x_{1}\cdots x_{n}} x_{1} \cdots x_{n} + I, \qquad \text{with } x_{1}, \ldots, x_{n} <_{X} x.
\end{align*}

\medskip
\noindent
\textit{Case 2}: Suppose $x_{a} \in X_{+} \setminus O_{+}$ with $x \in O_{+}$ and $a \in O_{0}$. 

By Lemma~\ref{lem:zw-wz-I} and the evaluation under $\epsilon$, we have $x_{a} \in \sum k_{\omega} \omega + I$ for some non-empty words $\omega \in \Xp\Xo$ satisfying $\omega <_{\X} x_{a}$, and scalars $k_{\omega} \in \K$. By definition of the order $<_{\X}$, there exist letters $a_{1}, \ldots, a_{s} \in O_{0}$ satisfying $a_{1} <_{O} \cdots <_{O} a_{s} <_{O} a$ such that
\begin{align*}
x_{a} \in k_{x} x + \sum_{1 \le i \le s} k_{x_{a_{i}}} x_{a_{i}} + \sum k_{\omega} \omega + I, \qquad \text{with } \omega <_{\X} x.
\end{align*}
An analogous reduction argument to Case 1 applies here by tracking the behavior of $\Delta(x_{a} - k_{x} x - \sum_{1 \le i \le s} k_{x_{a_{i}}} x_{a_{i}})$ instead of $\Delta(x)$. It follows that there exist letters $y_{1}, \ldots, y_{m} \in X$ and scalars $k_{y_{1}\cdots y_{m}} \in \K$ such that
\begin{align*}
x_{a} \in k_{x} x + \sum_{1 \le i \le s} k_{x_{a_{i}}} x_{a_{i}} + \sum k_{y_{1}\cdots y_{m}} y_{1} \cdots y_{m} + I,
\end{align*}
where $y_{1}, \ldots, y_{m} <_{X} x <_{X} x_{a_{1}} <_{X} \cdots <_{X} x_{a_{s}} <_{X} x_{a}$. This completes the proof.
\end{proof}

We show that the original element of every irreducible letter is itself irreducible.

\begin{lemma}\label{lem:x-reducible-x_a-reducible}
Let $x \in O$. Then the following assertions hold:
\begin{itemize}
\item [$\mathrm{(a)}$] If $x$ is $I$-reducible, then $x_{a}$ is $I$-reducible for every element $a \in O_{0}$.
\item [$\mathrm{(b)}$] $C(X_{I,+}) = O_{I,+}$.
\end{itemize}
\end{lemma}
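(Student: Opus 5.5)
The plan is to derive (a) from Lemma~\ref{lem:rl-sum-smaller-irrls} together with the adjoint-action description of $f_{X}(x_{a})$ in Lemma~\ref{lem:X-generates-H}(b), and then obtain (b) by a short set-theoretic argument. In (a) the letter $x_{a}$ is only defined for $x \in O_{+}$, so I assume $x \in O_{+}$ throughout.

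For (a), let $x$ be $I$-reducible. By Lemma~\ref{lem:rl-sum-smaller-irrls} (Case~1), $x \in \sum k\, x_{1}\cdots x_{n} + I$ with all letters $x_{i} <_{X} x$. Since $x \in O$, the definition of $<_{X}$ forces $C(x_{i}) <_{O} x$ for every $i$, because the elements of $C_{x}$ other than $x$ are larger than $x$. Apply $\pi$ and the left adjoint action of $\pi(a)$, using $\pi(x_{a}) = \sum \pi(a)_{(1)}\pi(x)S_{H}(\pi(a)_{(2)})$. The adjoint action satisfies $\mathrm{ad}(h)(uv) = \sum \mathrm{ad}(h_{(1)})(u)\,\mathrm{ad}(h_{(2)})(v)$, and $\Delta_{H}(\pi(a)) = \sum k_{a',a''}\pi(a')\otimes\pi(a'')$ with $a',a'' \in X_{0}^{*}$, as in Lemma~\ref{lem:zw-wz-I}. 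Hence $\pi(x_{a})$ is a combination of products $\prod_{i}\mathrm{ad}(\pi(a_{i}))(\pi(x_{i}))$ with $a_{i} \in X_{0}^{*}$.

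Each such factor is then handled by the type of $x_{i}$.
\begin{itemize}
\item If $x_{i} \in X_{0}$, then $\mathrm{ad}(\pi(a_{i}))(\pi(x_{i}))$ lies in $H_{[0]}$, since $S_{H}(H_{[0]}) \subseteq H_{[0]}$. Because $\K f_{X}(X_{0}) = \ker\epsilon_{H_{[0]}}$, it is a linear combination of $1$ and letters of $X_{0}$.
\item If $x_{i} = z_{b}$ with $z = C(x_{i}) \in O_{+}$, then $\mathrm{ad}(\pi(a_{i}))(\pi(z_{b})) = \mathrm{ad}(\pi(a_{i}b))(\pi(z))$. By Remark~\ref{rmk:ab-c-I}, $a_{i}b$ reduces modulo $I$ to a combination of elements $c \in X_{0}^{*}$, so the factor is a combination of the $\pi(z_{c})$, all of which have original element $z <_{O} x$.
\end{itemize}
Therefore $x_{a} \in \sum k_{\omega}\,\omega + I$, where every letter of every $\omega$ has original element strictly below $x$ in $<_{O}$. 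In particular $r(C(\omega)) <_{O} x = C(x_{a})$, so Corollary~\ref{cor:well-orders-compatibility-2} and Definition~\ref{def:reduction-order} give $\omega <_{\X} x_{a}$. Consequently $x_{a} - \sum k_{\omega}\omega \in I$ has leading word $x_{a}$, and $x_{a}$ is $I$-reducible.

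For (b), first take $y \in X_{I,+}$ and write $y = x_{a}$ with $x = C(y) \in O_{+}$ and $a \in O_{0}^{*}$. If $a = 1$ then $y = x$ is irreducible and lies in $O$. Otherwise, if $x$ were $I$-reducible, part (a) would make $x_{a}$ reducible, a contradiction. In both cases $C(y) \in O_{I,+}$. Conversely, every $x \in O_{I,+}$ already lies in $X_{I,+}$ and satisfies $C(x) = x$, so $O_{I,+} \subseteq C(X_{I,+})$.

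The main obstacle is the bookkeeping in (a): making sure that lifting the adjoint expansion back to $\K\X$ only produces words whose letters have original elements strictly below $x$. The points to watch are the $X_{0}$-factors, which must stay inside $H_{[0]}$, and the $z_{b}$-factors, which require absorbing $a_{i}$ into the subscript via Remark~\ref{rmk:ab-c-I}. Once that is secured, the ordering comparison is immediate from the definition of $<_{\X}$.
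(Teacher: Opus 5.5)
Your proof is correct, but for (a) it takes a heavier route than the paper.

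\textbf{The paper's route.} The paper does not use Lemma~\ref{lem:rl-sum-smaller-irrls}. It starts from the relation given by reducibility of $x$, invoking Lemma~\ref{lem:zw-wz-I} and Corollary~\ref{cor:well-orders-compatibility-2}: $x \in \sum k_{\omega}\omega + I$ with $C(r(\omega)) <_{O} x$. This constrains only the last letter of each $\omega$. It then applies the adjoint formula to $x$ itself rather than to this expansion. Since $H_{[0]} = \K 1 + \K\pi(X_{0})$, one gets $x_{a} \in \sum a_{1} x a_{2} + I$ with $a_{1}, a_{2} \in X_{0}^{*}$, and hence $x_{a} - \sum k_{\omega} a_{1}\omega a_{2} \in I$. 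The last letter of $a_{1}\omega a_{2}$ is either $a_{2} \in O_{0}$ (below $x$ because $<_{O}$ is degree-first) or $r(\omega)$. So each such word is $<_{\X} x_{a}$, and the leading word is $x_{a}$. The key point is that comparing a word with a letter under $<_{\X}$ only sees the last letter, so interior letters need no control.

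\textbf{Your route.} You push $\mathrm{ad}(\pi(a))$ through the whole product using the module-algebra identity and Remark~\ref{rmk:ab-c-I}. This gives a stronger conclusion: every letter in the resulting expression for $x_{a}$ has original element below $x$. The cost is that the lemma now depends on Lemma~\ref{lem:rl-sum-smaller-irrls}, whose proof needs the full comultiplication machinery of Lemmas~\ref{lem:beta-qbeta-h-I} and~\ref{lem:beta-u-I}. The paper's argument, by contrast, is elementary.

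\textbf{Two remarks on your version.}
First, you can avoid that dependency within your own approach. Because $\mathrm{ad}(h)$ maps $\ker\epsilon_{H}$ into itself, the factor coming from the last letter never degenerates to a scalar, so last-letter control already suffices.
Second, you claim that the $X_{0}$-letters you produce have original elements below $x$. This uses the degree-first choice of $<_{O}$ from Lemma~\ref{lem:X-generates-H}, and you should state it explicitly.

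Your part (b) coincides with the paper's. Your explicit restriction to $x \in O_{+}$, so that $x_{a}$ is defined, is an appropriate tightening of the statement.
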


\begin{proof}
Assume to the contrary that there exist an $I$-reducible letter $x \in O$ and a letter $a \in O_{0}$ such that $x_{a}$ is $I$-irreducible. Since $x$ is assumed to be $I$-reducible, Lemma~\ref{lem:zw-wz-I} and  Corollary~\ref{cor:well-orders-compatibility-2} guarantee the existence of non-empty words $\omega \in \Xp\Xo$ and scalars $k_{\omega} \in \K$ such that 
\begin{align*}
x \in \sum k_{\omega} \omega + I, \qquad \text{with } C(r(\omega)) <_{O} x.
\end{align*}
By utilizing the assignment $f_{X}$ constructed in Lemma~\ref{lem:X-generates-H}, we can express $x_{a}$ modulo $I$ as $x_{a} \in \sum a_{1} x a_{2} + I$ for some elements $a_{1}, a_{2} \in X_{0}^{*}$. This yields a polynomial relation of the form
\begin{align*}
p := x_{a} - \sum k_{\omega} a_{1} \omega a_{2} \in I.
\end{align*}
We now evaluate the rightmost factors of the terms appearing in the summation:
\begin{itemize}
\item If $a_{2} \neq 1$, then $C(r(a_{1}\omega a_{2})) = a_{2} <_{O} x$.
\item If $a_{2} = 1$, then $C(r(a_{1}\omega)) = C(r(\omega)) <_{O} x$.
\end{itemize}
In either case, Corollary~\ref{cor:well-orders-compatibility-2} implies that $a_{1} \omega a_{2} <_{\X} x <_{\X} x_{a}$. Consequently, the leading word of the polynomial satisfies $\mathrm{LW}(p) = x_{a}$, which forces $x_{a}$ to be $I$-reducible, yielding a contradiction. 

We now establish assertion (b). From the validity of Part (a), the inclusion $C(X_{I,+}) \subseteq O_{I,+}$ follows directly. Conversely, since $O_{I,+} = C(O_{I,+}) \subseteq C(X_{I,+})$, we obtain the reverse containment. Hence, the equality $C(X_{I,+}) = O_{I,+}$ holds.
\end{proof}

Now, we show that every non-empty word can be decomposed as a linear combination of products consisting exclusively of degree-zero letters and positive-degree irreducible letters.

\begin{theorem}\label{thm:rl-sum-lower-irrls}
With the notation established in Lemma~\ref{lem:X-generates-H}, if $x$ is an $I$-reducible letter in $X_{+}$, then there exist letters $x_{1}, \ldots, x_{n} \in X_{I,+} \cup X_{0}$ and scalars $k_{x_{1}\cdots x_{n}} \in \K$ such that
\begin{align*}
x \in \sum k_{x_{1}\cdots x_{n}} x_{1} \cdots x_{n} + I, \qquad \text{with } x_{1}, \ldots, x_{n} <_{X} x.
\end{align*}
Consequently, every non-empty word in $\X$ can be expressed modulo $I$ as a linear combination of products of letters from $X_{I,+} \cup X_{0}$, or more precisely, from $O_{I,+} \cup X_{0}$.
\end{theorem}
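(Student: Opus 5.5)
The plan is a transfinite induction on $x$ with respect to the well-order $<_{X}$ on $X_{+}$, which Lemma~\ref{lem:well-orders} makes available; Lemma~\ref{lem:rl-sum-smaller-irrls} supplies the one-step reduction. Fix an $I$-reducible letter $x \in X_{+}$ and assume the first assertion for every $I$-reducible letter $y \in X_{+}$ with $y <_{X} x$. Lemma~\ref{lem:rl-sum-smaller-irrls} gives
\begin{align*}
x \in \sum k_{x_{1}\cdots x_{n}} x_{1}\cdots x_{n} + I, \qquad x_{1},\ldots,x_{n} <_{X} x .
\end{align*}
We treat each letter $x_{j}$ in this expansion according to its type. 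If $x_{j} \in X_{0}$ or $x_{j} \in X_{I,+}$, we keep it. If $x_{j} \in X_{+}$ is $I$-reducible, then $x_{j} <_{X} x$, so the induction hypothesis replaces $x_{j}$ modulo $I$ by a combination of products of letters from $X_{I,+}\cup X_{0}$ that are $<_{X} x_{j} <_{X} x$. Since $I$ is a two-sided ideal, we may substitute these expressions into the monomials $x_{1}\cdots x_{n}$ and expand. The result is a finite combination of words in letters of $X_{I,+}\cup X_{0}$, all strictly below $x$, which proves the first assertion.

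For the consequence, take any non-empty word $\omega = y_{1}\cdots y_{m}$. Every letter of $\omega$ lies in $X_{0}$, in $X_{I,+}$, or is an $I$-reducible letter of $X_{+}$. We apply the first assertion to each letter of the last kind and multiply out modulo $I$, again using that $I$ is an ideal. This expresses $\omega$ modulo $I$ through products of letters in $X_{I,+}\cup X_{0}$.

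To refine the conclusion to $O_{I,+}\cup X_{0}$, we must eliminate the letters $z_{a}\in X_{I,+}$ with $a\in O_{0}$. By Lemma~\ref{lem:x-reducible-x_a-reducible}(b), $C(z_{a}) = z \in O_{I,+}$. By Lemma~\ref{lem:X-generates-H}(b), $\pi(z_{a}) = \sum \pi(a)_{(1)}\pi(z)S_{H}(\pi(a)_{(2)})$, and we would write this as $\pi$ of a combination of words $a_{1} z a_{2}$ with $a_{1},a_{2}\in X_{0}^{*}$. This step uses that $\Delta_{H}(\pi(a))$ lies in the image of $\K\Xo\otimes\K\Xo$, by the skew-triangular form of $\Delta$ on $X_{0}$, and that $S_{H}(H_{[0]})\subseteq H_{[0]}$, where $H_{[0]}$ is spanned by $1$ and $\pi(\K\Xo^{+})$. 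The proof of Lemma~\ref{lem:x-reducible-x_a-reducible}(a) already uses exactly this rewriting $z_{a}\in\sum a_{1}za_{2}+I$. Substituting it for each such letter then yields products of letters from $O_{I,+}\cup X_{0}$.

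The main obstacle is bookkeeping rather than substance. The induction has to run over a well-order (here $<_{X}$), so we must check that the letters produced by the induction hypothesis really stay below $x$; this holds by transitivity. The last refinement step drops the ordering condition, which is harmless because the second assertion does not require it. A minor point is the case $x\in I$. There the expansion is the empty sum, which is consistent with the statement.
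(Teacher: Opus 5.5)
Your proposal is correct and follows essentially the same route as the paper. Both arguments use transfinite induction along $<_{X}$ on the $I$-reducible letters of $X_{+}$, driven by Lemma~\ref{lem:rl-sum-smaller-irrls}, followed by substitution modulo the ideal $I$; both then rewrite each irreducible $z_{a}$ through $f_{X}$ as words $a_{1}za_{2}$ with $z\in O_{I,+}$, using Lemma~\ref{lem:x-reducible-x_a-reducible}. Your justification of that last rewriting, via $H_{[0]}=\K 1+\pi(\K X_{0})$ and $S_{H}(H_{[0]})\subseteq H_{[0]}$, just spells out what the paper compresses into ``by referencing the structural map $f_{X}$.''
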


\begin{proof}
Since $(X, <_{X})$ is well-ordered, we proceed by induction on reducible letters of $X_{+}$ with respect to the order $<_{X}$. By Lemma~\ref{lem:rl-sum-smaller-irrls}, there exist constituent letters $x_{1}, \ldots, x_{n} \in X$ and scalars $k_{x_{1}\cdots x_{n}} \in \K$ such that
\begin{align*}
x \in \sum k_{x_{1}\cdots x_{n}} x_{1} \cdots x_{n} + I, \qquad \text{with } x_{1}, \ldots, x_{n} <_{X} x.
\end{align*}
Applying the induction hypothesis to the terms on the right-hand side (since $x_{i} <_{X} x$ for all $i$), there exist replacement letters $y_{1}, \ldots, y_{m} \in X_{I,+} \cup X_{0}$ and scalars $k_{y_{1} \cdots y_{m}} \in \K$ yielding the representation
\begin{align*}
x \in \sum k_{y_{1}\cdots y_{m}} y_{1} \cdots y_{m} + I, \qquad \text{with } y_{1}, \ldots, y_{m} <_{X} x.
\end{align*}
Thus, every reducible letter in $X_{+}$ can be expressed modulo $I$ as a linear combination of products of letters in $X_{I,+} \cup X_{0}$. Since irreducible letters in $X_{+}$ trivially satisfy the representation, it follows that all letters in $X_{+}$ share this property. 

By referencing the structural map $f_{X}$ defined in Lemma~\ref{lem:X-generates-H}, every letter in $X_{+}$ can be written modulo $I$ as a linear combination of products of elements from $C(X_{I,+}) \cup X_{0}$. By Lemma~\ref{lem:x-reducible-x_a-reducible}, this is precisely the set $O_{I,+} \cup X_{0}$. Extending this result factor by factor, it follows that any non-empty word in $\X$ can be expressed modulo $I$ as a linear combination of products of letters from $O_{I,+} \cup X_{0}$, completing the proof.
\end{proof}

Applying the foundational results established above to the class of Hopf algebras whose Hopf coradical forms a Hopf subalgebra, we prove that a left or right Noetherian Hopf algebra is affine provided its Hopf coradical is affine.

\begin{theorem}\label{thm:main-theorem2}
Let $H$ be a Hopf algebra over $\K$. Then the following assertions hold:
\begin{itemize}
    \item [$\mathrm{(a)}$] If $S_{H}(H_{[0]}) \subseteq H_{[0]}$, then there exists a generating set $X$ of $H$ as described in Lemma~\ref{lem:X-generates-H} such that the set $\pi(O_{I,+}) \cup G_{[0]}$ generates $H$ as an algebra, where $\pi: \K\X \rightarrow H$ is the canonical projection, $I = \ker\pi$, and $G_{[0]}$ is a choice of generating set for $H_{[0]}$.
    \item [$\mathrm{(b)}$] If $H$ is left or right Noetherian and $H_{[0]}$ is affine, then $H$ is affine.
\end{itemize}
\end{theorem}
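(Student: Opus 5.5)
For part (a), I will take the data $(O,<_O)$, $(X,<_X)$, $f_X$ and $\pi$ produced by Lemma~\ref{lem:X-generates-H}; this needs the hypothesis $S_H(H_{[0]})\subseteq H_{[0]}$. Since $\pi$ is surjective, $H$ is spanned by the images $\pi(\omega)$ of words $\omega\in\X$. By Theorem~\ref{thm:rl-sum-lower-irrls}, every non-empty word is congruent modulo $I=\ker\pi$ to a linear combination of products of letters from $O_{I,+}\cup X_{0}$. Applying $\pi$, every element of $H$ then lies in the subalgebra generated by $\pi(O_{I,+})\cup\pi(X_0)$.

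It remains to compare $\pi(X_0)$ with $G_{[0]}$. By Lemma~\ref{lem:X-generates-H}(b) we have $\K f_X(X_0)=\ker\epsilon_{H_{[0]}}$, so $\pi(X_0)\cup\{1\}$ spans $H_{[0]}$. Hence $\pi(X_0)$ lies in the subalgebra generated by any generating set $G_{[0]}$ of $H_{[0]}$. It follows that $\pi(O_{I,+})\cup G_{[0]}$ generates $H$, which proves (a).

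For part (b), assume $H$ is left or right Noetherian. By Lemma~\ref{lem:Sk2006} the antipode $S_H$ is injective, and as observed after that lemma, $H_{[0]}$ is then a Hopf subalgebra; in particular $S_H(H_{[0]})\subseteq H_{[0]}$. So part (a) applies. Since $H_{[0]}$ is affine, we may take $G_{[0]}$ finite. $H$ is augmented by $\epsilon_H$, and $X$ together with $I$ lies in $\ker\epsilon$. So if $H$ is left Noetherian, Lemma~\ref{lem:O_I-finite} shows that $O_{I,+}$ is finite. Then $\pi(O_{I,+})\cup G_{[0]}$ is a finite generating set, and $H$ is affine.

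The right Noetherian case is the main obstacle, since the reduction order is only left-compatible and Lemma~\ref{lem:O_I-finite} is stated for left Noetherian algebras. I would try to reduce it to the left case by passing to $H^{op}$ or $H^{cop}$. If $H$ is right Noetherian, then $H^{op}$ is left Noetherian, and $H^{op}$ with coproduct $\Delta^{op}$ (that is, $H^{op,cop}$) is a Hopf algebra whose antipode is $S_H$. Its coradical, and hence its Hopf coradical, agrees with that of $H$ as a set, since $H^{op,cop}$ has the same subcoalgebras up to flipping, and the subalgebra generated by the coradical is the same set. Thus $(H^{op,cop})_{[0]}=H_{[0]}$ is affine, the left Noetherian case gives that $H^{op,cop}$ is affine, and $H$ is affine because a set generates $H$ as an algebra if and only if it generates $H^{op}$. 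The remaining check is that Skryabin's injectivity, and hence the Hopf subalgebra hypothesis, holds for $H^{op,cop}$; this is immediate since its antipode is again $S_H$.
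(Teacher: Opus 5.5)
Your proposal is correct and follows the paper's proof. Part (a) comes from Lemma~\ref{lem:X-generates-H} together with Theorem~\ref{thm:rl-sum-lower-irrls}, and part (b) comes from Lemma~\ref{lem:Sk2006}, Lemma~\ref{lem:O_I-finite} and passage to $H^{\mathrm{op},\mathrm{cop}}$ in the right Noetherian case. You also make explicit two points the paper leaves implicit, namely that $\pi(X_{0})\subseteq H_{[0]}$ is absorbed by $G_{[0]}$ and that $(H^{\mathrm{op},\mathrm{cop}})_{[0]}=H_{[0]}$; and you correctly apply Lemma~\ref{lem:O_I-finite} to $H$ itself, where the paper's text says ``since $H_{[0]}$ is left Noetherian'', which is a slip.
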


\begin{proof}
Assertion (a) follows directly from Lemma~\ref{lem:X-generates-H} and Theorem~\ref{thm:rl-sum-lower-irrls}. 

For assertion (b), first assume that $H$ is left Noetherian. By Lemma~\ref{lem:Sk2006}, the Hopf coradical $H_{[0]}$ is a Hopf subalgebra of $H$. Since $H_{[0]}$ is assumed to be affine, we may select a finite generating set $G_{[0]}$ for $H_{[0]}$. By virtue of Part (a), there exists a generating set $X$ of $H$ satisfying the conditions of Lemma~\ref{lem:X-generates-H} such that $\pi(O_{I,+}) \cup G_{[0]}$ generates $H$ as an algebra. Since $H_{[0]}$ is left Noetherian, Lemma~\ref{lem:O_I-finite} guarantees that the set $\pi(O_{I,+}) \cup G_{[0]}$ is finite, which implies that $H$ is affine.

Conversely, if $H$ is right Noetherian, then its opposite-coopposite Hopf algebra $H^{\mathrm{op},\mathrm{cop}}$ is left Noetherian and consequently affine by the preceding argument. Since affineness is preserved under passing to the opposite-coopposite structure, it follows that $H$ is also affine.
\end{proof}

\begin{remark}\label{rmk:more-result}
Let $H$ be a Hopf algebra, and let $K$ be a Hopf subalgebra of $H$ containing the coradical, i.e., $H_{(0)} \subseteq K$. Given a generating set $G_{K}$ of $K$, a straightforward application of the proof of Theorem~\ref{thm:main-theorem2} combined with the generalized lifting method establishes the existence of a finite set $Q \subseteq \ker \epsilon_{H}$ such that $Q \cup G_{K}$ generates $H$ as an algebra. 

As a consequence, $H$ is affine if $H$ is left or right Noetherian and $K$ is affine. In other words, a left or right Noetherian Hopf algebra is affine whenever its coradical is contained in an affine Hopf subalgebra.
\end{remark}

Combining Theorem~\ref{thm:main-theorem2} with \cite[Lemma 6.4]{JZ2025-2}, we obtain the following corollary:

\begin{corollary}\label{cor:Hopfsubalg-affine-1}
Let $H$ be a left or right Noetherian Hopf algebra over $\K$. Then every ascending chain of Hopf subalgebras of $H$ containing $H_{(0)}$ stabilizes.
\end{corollary}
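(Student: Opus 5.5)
Suppose, for contradiction, that $H$ is left Noetherian and that
\[
K_{1}\subsetneq K_{2}\subsetneq \cdots
\]
is a strictly ascending chain of Hopf subalgebras of $H$, each containing $H_{(0)}$. Set $K:=\bigcup_{n} K_{n}$. Since the chain is directed, $K$ is a subalgebra, a subcoalgebra, and is stable under $S_{H}$. Hence $K$ is a Hopf subalgebra of $H$ containing $H_{(0)}$. The plan is to show that $K$ is affine. Its finitely many generators then all lie in some $K_{N}$, so $K=K_{N}$, contradicting strict ascent.

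\textbf{Step 1: $H$ is affine.} Theorem~\ref{thm:main-theorem2} requires $H_{[0]}$ to be affine, which is not given. Instead I would use Remark~\ref{rmk:more-result} applied to $K$ itself: it gives a finite set $Q\subseteq\ker\epsilon_{H}$ such that $Q\cup G_{K}$ generates $H$ for any generating set $G_{K}$ of $K$. This yields only relative finite generation of $H$ over $K$. That is the form I would feed into the cited result \cite[Lemma 6.4]{JZ2025-2}. I expect that lemma to say roughly: if a Noetherian Hopf algebra is generated by a Hopf subalgebra $K$ together with finitely many elements, and this holds uniformly, then ascending chains of Hopf subalgebras containing a fixed coradical-type subalgebra stabilize.

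\textbf{Step 2: reduce to the finite-generation statement.} Concretely, I would aim to show that each $K_{n}$, and $K$, is left Noetherian, or at least has the finiteness needed for Lemma~\ref{lem:O_I-finite}. Then I would rerun the proof of Theorem~\ref{thm:main-theorem2} with $H$ replaced by $K$ and the ambient Noetherian hypothesis used through $H$.

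The key observation is that Lemma~\ref{lem:O_I-finite} only needs a strictly ascending chain of left ideals to fail to exist. Choose the set $X$ of Lemma~\ref{lem:X-generates-H} for $H$ so that its degree-zero part comes from $H_{[0]}$. The positive-degree irreducible letters are then finite by the Noetherian property of $H$. Every $K_{n}$ contains $H_{[0]}$, because it contains $H_{(0)}$ and is a subalgebra. By Theorem~\ref{thm:rl-sum-lower-irrls}, $H$ is generated over $H_{[0]}$ by the finite set $\pi(O_{I,+})$.

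Next I would choose the generating data compatibly with the chain. Refine the standard filtration so that $O_{+}$ is ordered with the elements lying in $K_{1}$ first, then those in $K_{2}\setminus K_{1}$, and so on. Each $K_{n}$ is then the subalgebra generated by $H_{[0]}$ and the irreducible letters of its own initial segment. This uses that the skew-triangular reductions of Lemma~\ref{lem:rl-sum-smaller-irrls} only involve smaller letters, together with $S_{H}$-stability of $K_{n}$ under the conjugation defining $y_{a}$.

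\textbf{Step 3: conclude.} Strict ascent of the $K_{n}$ forces infinitely many distinct irreducible letters in $O_{I,+}$. This contradicts Lemma~\ref{lem:O_I-finite}. The right Noetherian case follows by passing to $H^{\mathrm{op},\mathrm{cop}}$, as in the proof of Theorem~\ref{thm:main-theorem2}.

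\textbf{Main obstacle.} The hardest step is Step 2: ensuring that a single generating set $X$ for $H$ is adapted to every $K_{n}$ simultaneously, so that each $K_{n}$ is generated by $H_{[0]}$ plus an initial segment of irreducible letters. If this compatibility is awkward to arrange, the fallback is to apply \cite[Lemma 6.4]{JZ2025-2} directly. That lemma deduces stabilization from affineness of $H$ and of its Hopf subalgebras containing $H_{(0)}$, which Remark~\ref{rmk:more-result} and Theorem~\ref{thm:main-theorem2} supply once $H$ is known to be affine.
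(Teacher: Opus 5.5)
\textbf{There is a genuine gap: the decisive step is asserted, not proved.}

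\emph{The stated goal and the fallback.} The target you announce, that $K=\bigcup_n K_n$ is affine, is not available. Since $H_{(0)}\subseteq K$, we have $K_{(0)}=H_{(0)}$ and $K_{[0]}=H_{[0]}$, and nothing in the hypotheses makes $H_{[0]}$ affine. For $K=H$ this would be the Wu--Zhang question itself. What you need, and what suffices, is weaker: $K$ is generated by $H_{[0]}$ together with finitely many elements. Those elements lie in some $K_N$, and $K_N\supseteq H_{[0]}$, so $K=K_N$. Your Step 1 points the wrong way: Remark~\ref{rmk:more-result} makes $H$ finitely generated over $K$, not $K$ over $H_{[0]}$. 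Your fallback presupposes that $H$ is affine, which again needs $H_{[0]}$ affine, so it is circular.

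\emph{Step 2 is where the gap lies.} Lemma~\ref{lem:O_I-finite} applied to $H$ only bounds the irreducible letters of the particular generating set built in Lemma~\ref{lem:X-generates-H}. There $O$ is ordered degree-first, and skew-triangularity ($C(x'_{+})<_{O}C(x)$) comes from exactly that ordering. Reordering $O_{+}$ chain-first means redoing Lemma~\ref{lem:X-generates-H} for the whole chain at once. You would have to:
\begin{itemize}
\item choose generators of each $K_n$ over $H_{[0]}$;
\item choose a lift $\Delta$ in which, for every letter coming from $K_n$, all left-tensor letters $x'_{+}$ again come from the $K_{\le n}$-segment;
\item recheck that Lemmas~\ref{lem:zw-wz-I}--\ref{lem:rl-sum-smaller-irrls} survive the new order.
\end{itemize}
You flag this as the main obstacle and leave it open, so Step 3 rests on an unproved claim. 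If it were completed, it would give an interesting argument using only the Noetherian property of $H$.

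\emph{The route the paper intends.} The proof ``combining Theorem~\ref{thm:main-theorem2} with \cite[Lemma 6.4]{JZ2025-2}'' avoids the compatibility problem. It applies Theorem~\ref{thm:main-theorem2}(a) directly to the Hopf subalgebra $K$, whose Hopf coradical is $H_{[0]}$. The ingredient you are missing is that $K$ is again left (resp.\ right) Noetherian. The cited lemma supplies this; its use in Section~\ref{sec:7} indicates it gives the Noetherian property for Hopf subalgebras containing $H_{(0)}$. Then Lemma~\ref{lem:O_I-finite} applied to $K$ makes $O_{I,+}$ finite, so $K$ is generated by $H_{[0]}$ and finitely many elements, and the chain stops. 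You name this route (``show that $K$ is left Noetherian'') but give no argument for it.
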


\section{A Noetherian Hopf Algebra is Affine Iff Its Hopf Coradical is Affine}\label{sec:6}

In this section, we establish that a left or right Noetherian Hopf algebra is affine if and only if its Hopf coradical is affine. To achieve this, we first demonstrate that if the Hopf coradical of an affine Hopf algebra is a Hopf subalgebra, then the Hopf coradical itself must be affine.

We employ the notions of mirror reduction order, reduction-factorization and prime words from Section~\ref{sec:2}, and rebuild the overall framework accordingly. We also introduce some notation and adjust the underlying set structure.

Let $O$ be a set equipped with a map $t_{O}: O \rightarrow \mathbb{N}$. For each 
$z\in O_{0}$, we choose pairwise disjoint sets of formal symbols $D_{z}$ such that $z \in D_{z}$, and define
\begin{align*}
Y := (\bigcup_{z \in O_{0}} D_{z}) \cup O_{+}.
\end{align*}
Define a map $D: Y \rightarrow O$ by 
\begin{align*}
D(d) = z, \qquad D(y) = y, \qquad \text{for } d \in D_{z}, \ z \in O_{0}, \ y \in O_{+}.
\end{align*}
The map $D$ lifts to a monoid map $D: \Y \rightarrow \lOr$.

We equip $Y$ with a degree map $t_Y: Y  \rightarrow  \mathbb{N}$ extending $t_O$:
\begin{align*}
t_{Y}(d) = t_{O}(z) = 0, \qquad t_{Y}(y) = t_{O}(y), \qquad \text{for } z \in O_{0}, \ d \in D_{z}, \ y \in O_{+}.
\end{align*}
It follows that $Y_{0} = \bigcup_{z \in O_{0}} D_{z}$, and $Y_{+} = O_{+}$.

For each $n \ge 0$, let $<_{O_{n}}$ be an arbitrary well-ordering on $O_{n}$, and define a total order $<_{O}$ on $O$ as follows: for any $x,y \in O$, 
\begin{align*}
x <_{O} y \Longleftrightarrow 
\left\{\begin{array}{l}
t_{O}(x) < t_{O}(y), \text{ or } \\
t_{O}(x) = t_{O}(y) = n \text{ and } x <_{O_{n}} y.
\end{array}\right.
\end{align*}
For each $z \in O_{0}$, let $<_{D_z}$ be an arbitrary well-ordering on $D_{z}$, and define a total order $<_{Y}$ on $Y$: for any $x, y \in Y$, 
\begin{align*}
x <_{Y} y \Longleftrightarrow 
\left\{\begin{array}{l}
D(x) <_{O} D(y), \text{ or } \\
D(x) = D(y) = z \in O_{0} \text{ and } x <_{D_{z}} y.
\end{array}\right.
\end{align*}

The (mirror) reduction order $<_{\lOr}$ on $\lOr$ is defined as follows: for any non-empty words $\alpha, \beta \in \lOr$, set $1 <_{\lOr} \alpha$, and 
\begin{align*}
\alpha <_{\lOr} \beta \Longleftrightarrow 
& \left\{\begin{array}{l}
r(\alpha) <_{O} r(\beta), \text{ or } \\
r(\alpha) = r(\beta) \text{ and } 
\left\{\begin{array}{l}
|\alpha| < |\beta|, \text{ or } \\
|\alpha| = |\beta| \text{ and } \alpha <_{\mlex_O} \beta.
\end{array}\right.
\end{array}\right.
\end{align*}
The (mirror) reduction order $<_{\Y}$ on $\Y$ is defined as follows: for any words $\alpha, \beta \in \Y$, 
\begin{align*}
\alpha <_{\Y} \beta \Longleftrightarrow 
& \left\{\begin{array}{l}
D(\alpha) <_{\lOr} D(\beta), \text{ or } \\
D(\alpha) = D(\beta) \text{ and } \alpha <_{\mlex_{Y}} \beta.
\end{array}\right.
\end{align*}
Evidently, the sets $(O, <_{O})$, $(Y, <_{Y})$, $(\lOr, <_{\lOr})$, and $(\Y, <_{\Y})$ are well-ordered.

For a non-empty word $\alpha$ in $\Y$, the (mirror) reduction-factorization of $\alpha$, denoted by $\mathrm{mrf}(\alpha) = (\alpha_{L}, \alpha_{R})$, is the unique decomposition $\alpha = \alpha_{L}\alpha_{R}$
where $\alpha_{L}$ is the greatest prefix of $\alpha$ with respect to $<_{\Y}$. The word $\alpha$ is called prime if $\alpha_{L} = \alpha$. Let $m_{\alpha}$ denote the greatest letter occurring in $\alpha$ with respect to $<_{Y}$.

Suppose that $H$ is an affine Hopf algebra over $\K$ whose Hopf coradical $H_{[0]}$ is a Hopf subalgebra (i.e., $S_H(H_{[0]}) \subseteq H_{[0]}$). Then 
we may choose a finite set $O$ of formal symbols generating $H$ via an assignment $f_O: O \rightarrow H$ such that $f_O(O) \subseteq \ker \epsilon_H$. Define a degree map $t_O: O \rightarrow \mathbb{N}$ by
\[
t_O(y) = \min\{ n \in \mathbb{N} \mid f_O(y) \in H_{[n]} \}.
\]
This induces a well-ordered set $(O, <_{O})$ as constructed above.

By the local finiteness of coalgebras and the standard filtration of $H$, we may choose:
\begin{itemize}
\item Pairwise disjoint finite sets $D_z$ of formal symbols, where $z \in O_0$;
\item An expanded finite set $Y = \left( \bigcup_{z \in O_0} D_z \right) \cup O_+$;
\item An assignment $f_Y: Y \rightarrow H$;
\item A degree map $t_Y: Y \rightarrow \mathbb{N}$ induced by $t_O$;
\item A comultiplication $\Delta: \K\Y \rightarrow \K\Y \otimes \K\Y$;
\end{itemize}
such that all of the following conditions hold:
\begin{itemize}
\item $f_{Y}(Y) \subseteq \ker \epsilon_{H}$.
\item For each $z \in O_0$ and each $y \in D_z$, there exist elements $y' \in D_z$ and $y'', r_y \in \K D_z$ such that
\[
\Delta(y) = y \otimes (1 - r_y) + 1 \otimes y + \sum y' \otimes y'', \quad \text{ with } y' \neq y.
\]
\item For each $y \in Y_{n}$ with $n \ge 1$, there exist homogeneous elements $y' \in Y$, $y''\in \K Y$ and $r_{y} \in \K Y_{0}$ such that
\[
\Delta(y) = y \otimes (1-r_{y}) + 1 \otimes y + \sum y' \otimes y'', 
\]
where $t_{Y}(y') + t_{Y}(y'') \le n$, and $t_{Y}(y') < n$ (which guarantees $D(y') <_{O} D(y)$).
\item $\Delta_H \circ \pi = (\pi \otimes \pi) \circ \Delta$, where $\pi: \K\Y \rightarrow H$ is the canonical algebra projection lifted from the assignment $f_{Y}$.
\end{itemize}
Moreover, the free algebra $\K\Y$ admits a grading defined by $\deg(y) = t_{Y}(y)$ for each $y \in Y$.

With the notation as above, we have the following comultiplication on words in $\Y$:
\begin{lemma}\label{lem:comultip-prime-words}
Let $\omega$ be a prime word in $\Y$ with $\deg(w) \ge 1$. Then there exist words $\omega' \in \Y$ and homogeneous polynomials $\omega'' \in \K\Y$ such that
\begin{align*}
\Delta(w) = \sum_{\substack{D(\omega')= D(\omega) \\ \deg(\omega'')=0}} \omega' \otimes \omega'' + \sum_{D(\omega') <_{\lOr} D(\omega)} \omega' \otimes \omega''.
\end{align*}
\end{lemma}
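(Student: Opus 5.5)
We argue by induction on the length $|\omega|$, in the spirit of Lemma~\ref{lem:comultiplication-prime-words}, but keeping track of two kinds of terms. A term $\omega'\otimes\omega''$ is \emph{good} if either $D(\omega')=D(\omega)$ and $\deg(\omega'')=0$, or $D(\omega')<_{\lOr}D(\omega)$.

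\textbf{Base case.} For $|\omega|=1$ we have $\omega=y\in Y_{n}$ with $n\ge 1$, so $y\in O_{+}$ and $D(y)=y$. The formula
\[
\Delta(y)=y\otimes(1-r_{y})+1\otimes y+\sum y'\otimes y''
\]
has only good terms:
\begin{itemize}
\item $y\otimes(1-r_y)$ has $D(y)=D(\omega)$ and right factor of degree $0$, since $r_y\in\K Y_0$;
\item $1\otimes y$ has $D(1)=1<_{\lOr}y$;
\item each $y'\otimes y''$ has $t_Y(y')<n$, hence $D(y')<_{O}y$, and so $D(y')<_{\lOr}D(y)$ by Lemma~\ref{lem:compatibility-orders}.
\end{itemize}

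\textbf{Inductive step.} For $|\omega|\ge2$ write $\omega=x\alpha$ with $x\in Y$. By the $\Y$-analogue of Lemma~\ref{lem:properties-alpha_R}(c), $\alpha$ is again prime: $r(D(\alpha))=r(D(\omega))=D(m_\omega)=D(m_\alpha)$.

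Moreover $\deg(\alpha)\ge1$. Indeed, $r(D(\omega))=D(m_\omega)$ and $\deg\omega\ge1$ force $m_\omega\in Y_+$. That letter is the last letter of $D(\omega)$, so it occurs in $\alpha$.

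Hence the induction hypothesis applies to $\alpha$, and $\Delta(\alpha)=\sum\alpha'\otimes\alpha''$ with all terms good relative to $\alpha$. For $\Delta(x)$ we use the defining formula, with two subcases:
\begin{itemize}
\item if $x\in Y_0$, take the $D_z$-formula;
\item if $x\in Y_+$, take the formula above, where $D(x')<_O D(x)$.
\end{itemize}
Then we expand
\[
\Delta(\omega)=\Delta(x)\Delta(\alpha)=\sum x^{(1)}\alpha'\otimes x^{(2)}\alpha''.
\]

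\textbf{Classifying the products.} The key tool is left-compatibility (Lemma~\ref{lem:ac<bc}): $D(\alpha')<_{\lOr}D(\alpha)$ gives $D(x^{(1)}\alpha')<_{\lOr}D(x^{(1)}\alpha)$. We also need the following fact, which we expect to prove as in Lemma~\ref{lem:C(a_L)=C(a)_L}. Since $D(\omega)$ is prime, its last letter is its largest one. So replacing the prefix $D(x)$ by $1$, or by any letter $<_O D(x)$, lowers it: $D(x^{(1)})D(\alpha)\le_{\lOr}D(\omega)$, with equality iff $D(x^{(1)})=D(x)$.

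The products then sort as follows.
\begin{itemize}
\item \textbf{(i)} $x^{(1)}$ with $D(x^{(1)})=D(x)$, times a good term of type one for $\alpha$. This gives $D(\omega')=D(\omega)$. The right factor $x^{(2)}\alpha''$ has degree $0$ exactly when $\deg x^{(2)}=0$. That holds in the term $x\otimes(1-r_x)$, and also in $y'\otimes y''$ with $y'\in D_z$ when $x\in D_z$.
\item \textbf{(ii)} All remaining products have $D(\omega')<_{\lOr}D(\omega)$. Either $D(x^{(1)})<_O D(x)$ or $x^{(1)}=1$, using the fact just stated, or the $\alpha$-term is of type two and left-compatibility applies.
\end{itemize}

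\textbf{Main obstacle.} The hardest case is when $x\in D_z$ and the term has the form $1\otimes x\alpha''$ with $D(\alpha')=D(\alpha)$. Here $D(\omega')=D(\alpha)$, and we need the strict inequality $D(\alpha)<_{\lOr}D(x\alpha)$. Both words end in the same letter, and $|\alpha|<|x\alpha|$, so the definition of $<_{\lOr}$ gives it by length.

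The analogous point is $x'\alpha$ with $x'$ of smaller degree. There $D(x')D(\alpha)$ has the same length as $D(\omega)$ and the same last letter, so it is smaller in $<_{\mlex_O}$, because the first letter differs at the only discrepancy. This needs $|\alpha|\ge1$ and $D(x')<_O D(x)$, both of which we have; for a letter $x'$ with $D(x')=D(x)$ in $D_z$ we land in case (i) instead.

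I expect this case-bookkeeping, especially when $x^{(1)}$ is a degree-zero letter in the same $D_z$ but a different letter, to be the main point needing care. Those terms give $D(\omega')=D(\omega)$, so the proof must ensure their right factors have degree $0$. This is guaranteed because in the $D_z$-formula both $y'$ and $y''$ lie in $\K D_z$ and have degree $0$.
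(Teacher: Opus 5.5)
Your proposal is correct and follows essentially the paper's route: the paper states this lemma without proof as the analogue of Lemma~\ref{lem:comultiplication-prime-words}, and that proof is exactly your induction on $|\omega|$ (write $\omega=x\alpha$, note $\alpha$ is prime, check each left tensor factor of $\Delta(x)\Delta(\alpha)$). One slip: Lemma~\ref{lem:ac<bc} requires non-empty words, so the terms $x\otimes(1-r_x)\alpha$ and $x'\otimes x''\alpha$ coming from the summand $1\otimes\alpha$ of $\Delta(\alpha)$ must instead be handled by primality (the last letter of $D(\omega)$ is its largest and $|D(\omega)|\ge 2$, so $D(x),D(x')<_{\lOr}D(\omega)$), as in the paper's case $\omega'=x$.
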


\begin{lemma}\label{lem:comultip-words}
Let $\omega$ be a non-prime word in $\Y$ with $\deg(\omega) \ge 1$, and let $\mathrm{mrf}(\omega) = (\omega_{L}, \omega_{R})$. Then
there exist words $(\omega_{L})', (\omega_{R})' \in \Y$ and homogeneous polynomials $(\omega_{L})'', (\omega_{R})'' \in \K\Y$ such that 
\begin{align*}
\Delta(\omega) = \sum_{\substack{D((\omega_{L})')= D(\omega_{L}) \\ \deg((\omega_{L})'')=0}} (\omega_{L})'(\omega_{R})' \otimes (\omega_{L})''(\omega_{R})'' + \sum_{D((\omega_{L})') <_{\lOr} D(\omega_{L})} (\omega_{L})'(\omega_{R})' \otimes (\omega_{L})''(\omega_{R})''.
\end{align*}
where the greatest letters satisfy $D(m_{(\omega_{R})'}) \le_{O} D(m_{\omega_{R}}) <_{O} D(m_{\omega_{L}})$.
\end{lemma}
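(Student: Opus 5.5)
The plan is to prove Lemma~\ref{lem:comultip-words} by writing $\omega=\omega_{L}\omega_{R}$ and using multiplicativity: $\Delta(\omega)=\Delta(\omega_{L})\Delta(\omega_{R})$. The left factor is handled by Lemma~\ref{lem:comultip-prime-words}, since $\omega_{L}$ is prime. Because the greatest letter of $\omega$ has original element of positive degree, $\deg(\omega_{L})\ge 1$. By the analogue of Lemma~\ref{lem:properties-alpha_R}(b),(d) in the $Y$-setting, $r(D(\omega_{L}))=D(m_{\omega})$, and hence $\deg(\omega_L)\ge t_{O}(D(m_\omega))\ge 1$. This yields the splitting of $\Delta(\omega_{L})$ into the two sums indexed by $D((\omega_{L})')=D(\omega_{L})$ with $\deg((\omega_{L})'')=0$, and by $D((\omega_{L})')<_{\lOr}D(\omega_{L})$. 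Expanding the product term by term then gives exactly the two displayed sums, with $(\omega_{R})'\otimes(\omega_{R})''$ running over the terms of $\Delta(\omega_{R})$.

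What remains is the constraint $D(m_{(\omega_{R})'})\le_{O}D(m_{\omega_{R}})<_{O}D(m_{\omega_{L}})$. I would prove the first inequality by induction on the length of $\omega_{R}$, using only the letter-level comultiplication formulas. For $y\in D_{z}$, every left tensor factor lies in $\K D_{z}$ or equals $1$, so $D$ of it is $z=D(y)$ or the word is empty. For $y\in Y_{n}$ with $n\ge1$, the left factors are $y$, $1$, or $y'$ with $t_{Y}(y')<n$, and therefore $D(y')<_{O}D(y)$ because $<_{O}$ compares degrees first. Hence every word in the support of a left factor of $\Delta(y)$ has all its $D$-letters $\le_{O}D(y)$. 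Since $\Delta(\omega_{R})=\prod\Delta(x_{i})$ over the letters $x_i$ of $\omega_{R}$, every left factor $(\omega_{R})'$ is a product of such words. Thus $D(m_{(\omega_{R})'})\le_O\max_i D(x_i)=D(m_{\omega_{R}})$, using $D(m_\alpha)=m_{D(\alpha)}$, the analogue of Lemma~\ref{lem:properties-alpha_R}(a).

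The strict inequality $D(m_{\omega_{R}})<_{O}D(m_{\omega_{L}})$ is the $Y$-version of Lemma~\ref{lem:properties-alpha_R}(b). By the characterization in part (d), $D(\omega_{R})$ does not contain the letter $m:=D(m_{\omega})=D(m_{\omega_L})$. Since $m$ is the maximal original letter of $\omega$, every $D$-letter of $\omega_{R}$ is therefore strictly below $m$. To justify this in the new setting, I would re-run the argument of Section~\ref{sec:2}: the greatest prefix under $<_{\Y}$ must end at the last occurrence of an original letter equal to $m$. The reason is that $<_{\lOr}$ compares last letters first, and among prefixes ending in $m$ the longer one wins. The only change is that $D$ replaces $C$; prefixes with equal $D$-image and equal length do not compete, since prefixes of a fixed word have distinct lengths.

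I expect the main obstacle to be bookkeeping rather than ideas. In particular, the product in $\Delta(\omega_{L})\Delta(\omega_{R})$ must be written so that terms with $\deg((\omega_{L})'')=0$ stay separate from the rest, because the $r_{y}$-corrections may mix degree-zero letters into right factors. Since the statement only records $(\omega_{L})'$ and $(\omega_{R})'$ with homogeneous right factors, I would keep each tensor summand as a pure product of the factor-level terms, so that no recombination is needed.
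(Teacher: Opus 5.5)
Your proposal is correct and follows the paper's route. The paper states this lemma without proof, but it proves the Section~\ref{sec:4} analogue (Lemma~\ref{lem:comultiplication-words-2}) in exactly this way: it expands $\Delta(\omega_{L})\Delta(\omega_{R})$, applies the prime-word lemma to $\omega_{L}$, and uses the letter-level comultiplication for $\omega_{R}$. Your checks that $\deg(\omega_{L})\ge 1$ and of the greatest-letter bounds via the $D$-version of the mr-factorization only spell out what the paper calls straightforward.
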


Under the constructive framework described above, we are now ready to show that the Hopf coradical of an affine Hopf algebra is affine:

\begin{theorem}\label{thm:main-theorem3}
Let $H$ be an affine Hopf algebra such that $S_{H}(H_{[0]}) \subseteq H_{[0]}$. Then $H_{[0]}$ is affine.
\end{theorem}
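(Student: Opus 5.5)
The plan is to show that $H_{[0]}$ is generated as an algebra by the finite set $\pi(Y_{0})$, together with finitely many elements obtained by projecting images of $Y$ to degree zero. The finiteness of $Y$ is supplied by the construction preceding Lemma~\ref{lem:comultip-prime-words}. Since $O$ is finite, the sets $D_{z}$ are finite and $O_{+}$ is finite, so $Y$ is finite. Let $A$ be the subalgebra of $H$ generated by $\pi(Y_{0})$. The goal is to prove $A = H_{[0]}$. The inclusion $A \subseteq H_{[0]}$ holds because each $y\in Y_{0}$ has $t_{Y}(y)=0$, hence $\pi(y)\in H_{[0]}$. For the reverse inclusion, note that $H_{[0]}$ is generated by the coradical $H_{(0)}$. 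It therefore suffices to show that every element $h\in H_{(0)}\cap\ker\epsilon_{H}$ lies in $A$.

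Write such an $h$ as $h=\pi(p)$ with $p\in\K\Y$. Decompose $p=\sum_{n}p_{n}$ by degree, and let $N=\max\{n : \pi(p_{n})\ne 0\}$ over all representations chosen. I would choose the representation $p$ so that its positive-degree part is minimal with respect to the well order $<_{\Y}$ on leading words. Concretely, if $p=p_{0}+q$ with $q$ spanned by words of degree $\ge 1$, I take $\LW(q)$ minimal. If $q\in I$, then $h=\pi(p_{0})\in A$ and we are done. Otherwise, let $\omega=\LW(q)$ with $\deg\omega\ge1$, and apply Lemmas~\ref{lem:comultip-prime-words} and \ref{lem:comultip-words} to compute $\Delta(p)$. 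Since $h\in H_{(0)}$, $\Delta_{H}(h)\in H_{(0)}\otimes H_{(0)}$, and in particular $\Delta_{H}(h)\in H_{[0]}\otimes H_{[0]}$. Now compare this with the term $\omega_{L}\cdot(\cdots)\otimes(\text{degree }0)$ and the term $1\otimes\omega$ produced by the lemmas. The idea mirrors Lemmas~\ref{lem:beta-qbeta-h-I} and \ref{lem:beta-u-I}. Apply $\id\otimes\phi$ for a functional $\phi$ that kills $H_{[0]}$-type right factors but detects a right factor coming from $\omega$. Alternatively, pass to $\gr_{\s}H\cong B\sharp H_{[0]}$ from Lemma~\ref{lem:gr[c]H-Radford-biproduct} and project onto the top degree: $(\pi\otimes\pi)\Delta(q)$ has a nonzero component in $(\gr_{\s}H)_{N}\otimes(\gr_{\s}H)_{0}$, contradicting the fact that $\Delta_{H}(h)$ lives in filtration degree $0$ on both sides. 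From this I expect to deduce a relation rewriting $\omega_{L}$, modulo $I$, as words with strictly smaller $(\cdot)_{L}$, plus degree-zero terms. This would contradict the minimality of $\LW(q)$.

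An alternative, cleaner route avoids the word combinatorics. Use the degree-zero projection $\gr_{\s}H\to(\gr_{\s}H)_{0}=H_{[0]}$, which is the Hopf algebra map $\pi_{0}$ in the Radford biproduct. The image under $\pi_{0}$ of $\operatorname{gr}$ of the generators $\pi(Y)$ generates $\gr_{\s}H$'s degree-zero part only modulo products involving positive-degree pieces. So the combinatorial argument is genuinely needed to control products of positive-degree elements that fall back into $H_{[0]}$. I would therefore keep the reduction-order induction as the main line. The expected main obstacle is exactly this phenomenon: a product of elements of positive filtration degree may land in $H_{[0]}$, with cancellation of top symbols. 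Handling it requires the triangularity condition $t_{Y}(y')<n$, which forces $D(y')<_{O}D(y)$, together with the prime-factorization bookkeeping of Lemma~\ref{lem:comultip-words}. These ensure that each cancellation strictly lowers the reduction order, so that the well-ordering of $(\Y,<_{\Y})$ terminates the induction with all positive-degree terms eliminated. I would first attempt this step by mimicking Case~1 of Lemma~\ref{lem:rl-sum-smaller-irrls} with the roles of left and right tensor factors adapted.
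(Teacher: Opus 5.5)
Your overall frame agrees with the paper's: the target $H_{[0]}=\pi(\K\Yo)$, Lemmas~\ref{lem:comultip-prime-words}--\ref{lem:comultip-words}, functionals on right tensor factors, and a well-founded induction. But the proposal stops exactly where the proof has to happen. The sentence ``From this I expect to deduce a relation rewriting $\omega_{L}$, modulo $I$, \ldots'' is the entire content of the theorem, and neither mechanism you offer produces it.

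In Lemmas~\ref{lem:beta-qbeta-h-I}, \ref{lem:beta-u-I} and \ref{lem:rl-sum-smaller-irrls} one starts from an element of $I$, so its coproduct lies in $I\otimes\K\X+\K\X\otimes I$ and $\id\otimes\phi$ returns a new element of $I$. You have no such element. You only know $\pi(q)=h-\pi(p_{0})\in H_{[0]}$, and turning this into a relation in $I$ is what must be proved. Applying $\id\otimes\phi$ to $\Delta_{H}(h)=(\pi\otimes\pi)\Delta(p)$ yields a representation of a \emph{different} element $h'=(\id\otimes\phi)\Delta_{H}(h)$ of the subcoalgebra generated by $h$. It is not a statement about $\omega_{L}$ modulo $I$. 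If $\phi$ vanishes on $H_{[0]}$, nothing is detected: the $1\otimes q$ part contributes $\phi(\pi(q))=0$, and for prime $\omega$ every top-order left factor is paired with a degree-zero right factor. If $\phi$ does not vanish on $H_{[0]}$, your minimality hypothesis concerns only representations of the fixed element $h$, so it gives no way back from $h'$ to $h$.

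The $\gr_{\s}H$ variant fails outright. Since $\pi(q)\in H_{[0]}$, the element $(\pi\otimes\pi)\Delta(q)=\Delta_{H}(\pi(q))$ has no component in $(\gr_{\s}H)_{N}\otimes(\gr_{\s}H)_{0}$ with $N\ge 1$. The word-degree of $q$ says nothing about the filtration degree of $\pi(q)$.

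The missing idea is the paper's device for returning to the original element. Fix a simple subcoalgebra $C$ and a simple right $C$-subcomodule $M\subseteq C$. Induct, simultaneously for all non-zero $z\in M$, on $D(\zeta)$, the $<_{\lOr}$-largest value of $D(\omega_{L})$ over the support of a representation of $z$. After first reducing to suffixes $\alpha$ of degree zero (Case~1.2), let $U$ be the span of the images of the degree-zero right factors paired with left factors of order $D(\zeta)$. If $z\in U$, then $z\in\pi(\K\Yo)$ directly. Otherwise pick $\psi$ with $\psi(z)=1$ and $\psi(U)=0$. Then $w=(\id\otimes\psi)\Delta_{H}(z)$ lies in $M$ and satisfies $\epsilon_{H}(w)=\psi(z)=1$, so $w\neq 0$, and $w$ has a representation of strictly smaller order. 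Because $M$ is a simple left $C^{*}$-module, $z=(\id\otimes c^{*})\Delta_{H}(w)$ for some $c^{*}\in C^{*}$. The comultiplication lemmas show this does not raise the order, which closes the induction.

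Your measure $\LW(q)$ in $<_{\Y}$ is also not stable enough, for two reasons. First, $\Delta$ mixes the letters within each $D_{x}$, $x\in O_{0}$, so only $D$-images are controlled. Second, substituting a rewriting of $\omega_{L}$ that involves a degree-zero word $u$ with $|u|>|\omega_{L}|$ gives $u\omega_{R}>_{\Y}\omega$ whenever $\omega_{R}$ has positive degree. The paper's measure, the $D$-image of the prime prefix, is insensitive to both effects.
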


\begin{proof}
To demonstrate that $H_{[0]}$ is affine, it is sufficient to show the equality $H_{[0]} = \pi(\K\Yo)$. The containment $\pi(\K\Yo) \subseteq H_{[0]}$ holds by definition. For the reverse containment, let $C$ be a simple subcoalgebra of $H$, and let $M$ be a simple right $C$-subcomodule of $C$. Choose a non-zero element $z \in M$. We proceed via a case analysis on the counit evaluation of $z$ to demonstrate that $z \in \pi(\K\Yo)$.

\noindent
\textit{Case 1}: Suppose $\epsilon_{H}(z) = 0$. Let $r := \mathrm{rank}(\Delta_{H}(z)) - 2$. By the standard properties of tensor rank and counitality, there exist elements $z_{i}', z_{i}'', g, h \in M$ such that 
\begin{align*}
\Delta_{H}(z) = z \otimes g + h \otimes z + \sum_{i=1}^{r} z_{i}' \otimes z_{i}'', 
\end{align*}
where $\epsilon_{H}(z_{i}') = \epsilon_{H}(z_{i}'') = 0$, and $\epsilon_{H}(g) = \epsilon_{H}(h) = 1$.

Since $\pi(Y)$ forms a generating set of $H$, there exist words $\omega \in \Y$ and scalars $k_{\omega} \in \K$ such that $z = \pi\left(\sum k_{\omega} \omega\right)$. Let us define the support set and its corresponding prime word set by
\begin{align*}
W := \left\{\omega \;\middle|\; z = \pi\left(\sum k_{\omega} \omega\right) \text{ and } k_{\omega} \neq 0\right\}, \qquad W_{L} := \{D(\omega_{L}) \mid \omega \in W\}.
\end{align*}
Let $D(\zeta)$ denote the greatest word of $W_{L}$ with respect to the well-ordering $<_{\lOr}$. We expand $z$ in terms of this leading component as
\[
z = \pi \left(\sum_{\substack{\beta\alpha \in W \\ D(\beta) = D(\zeta)}} k_{\beta\alpha} \beta\alpha + \sum_{\substack{\omega \in W \\ D(\omega_{L}) <_{\lOr} D(\zeta)}} k_{\omega} \omega \right).
\]

We proceed by induction on $D(\zeta)$ with respect to $<_{\lOr}$. Applying the coproduct $\Delta_{H}$ yields:
\[
\Delta_{H}(z) = (\pi \otimes \pi) \circ \Delta \left(\sum_{\substack{\beta\alpha \in W \\ D(\beta) = D(\zeta)}} k_{\beta\alpha} \beta\alpha + \sum_{\substack{\omega \in W \\ D(\omega_{L}) <_{\lOr} D(\zeta)}} k_{\omega} \omega \right).
\]
If $\deg(\zeta) = 0$, then it follows that $\omega \in \Yo$ for all $\omega \in W$, which immediately yields $z \in \pi(\K\Yo)$. Now, assume $\deg(\zeta) \ge 1$. By Lemmas~\ref{lem:comultip-prime-words}~and~\ref{lem:comultip-words}, there exist words $\beta', \alpha', \omega' \in \Y$ and homogeneous polynomials $\beta'', \alpha'', \omega'' \in \K\Y$ such that
\begin{align}\label{formula:mainthm4_1}
& z \otimes g + h \otimes z + \sum_{i=1}^{r} z_{i}' \otimes z_{i}''  \\
= \ & 
(\pi \otimes \pi) \Bigg(\sum_{\substack{\beta\alpha \in W \\ D(\beta) = D(\zeta)}} k_{\beta\alpha} \bigg(\sum_{\substack{D(\beta') = D(\beta) \\ \deg(\beta'') =0}} \beta'\alpha' \otimes \beta''\alpha'' + \sum_{D(\beta') <_{\lOr} D(\beta)} \beta'\alpha' \otimes \beta''\alpha'' \bigg) \notag \\
& \qquad \qquad 
+ \sum_{\substack{\omega \in W \\ D(\omega_{L}) <_{\lOr} D(\zeta)}} k_{\omega} \sum \omega' \otimes \omega'' \Bigg), \notag
\end{align}
where $D((\omega')_{L}) \le_{\lOr} D(\omega_{L}) <_{\lOr} D(\zeta)$, $D(m_{\alpha'}) \le_{O} D(m_{\alpha}) <_{O} D(m_{\zeta})$, and for each summand $\alpha' \otimes \alpha''$ in $\Delta(\alpha)$, $\deg(\alpha') \le \deg(\alpha)$, with strict inequality precisely whenever $\deg(\alpha'') > 0$.

Let us define the maximal component degree $d_{1}$ and the right component subspace $U$ by
\begin{align*}
d_{1} &:= \max\{\deg(\alpha) \mid \beta\alpha \in W\}, \\
U &:=  \sum_{\substack{\beta\alpha \in W \\ D(\beta) = D(\zeta)}} \sum_{\substack{D(\beta') = D(\beta) \\ \deg(\beta'') = 0}}  \K \pi(\beta''\alpha'').
\end{align*}
We now branch into two distinct structural sub-cases:

\noindent
\textit{Case 1.1}: Suppose $d_{1} = 0$. This implies that $\alpha, \alpha' \in \Yo$ and $\alpha'' \in \K\Yo$ for all elements. We subdivide this condition as follows:

\noindent
\textit{Case 1.1.1}: Suppose $\K z \cap U \neq 0$. Since $\deg(\beta'') = 0$ for all $\beta''$ appearing in $U$, we have $\beta'' \in \K\Yo$, forcing $z \in \pi(\K\Yo)$.

\noindent
\textit{Case 1.1.2}: Suppose $\K z \cap U = 0$. The sum $\K z + U$ is therefore direct. We can decompose the entire Hopf algebra as $H = \K z \oplus U \oplus R$ for a choice of complement subspace $R \subseteq H$. Define a linear functional $\psi: H \rightarrow \K$ by setting:
\begin{align*}
\psi(z) = 1, \quad \psi(U) = 0, \text{ and }  \psi(R) = 0.
\end{align*}
Applying the linear map $\id \otimes \psi$ to equation~\eqref{formula:mainthm4_1}, we find that there exist a non-zero element $w \in M$, words $\mu \in \Y$, and scalars $k_{\mu} \in \K$ such that $w = \pi \left(\sum k_{\mu} \mu \right)$ with $D(\mu_{L}) <_{\lOr} D(\beta) = D(\zeta)$.

Since $M$ is a simple left $C^{*}$-module of $C$, there exists a linear form $c^{*} \in C^{*}$ such that $z = (\id \otimes c^{*})(\Delta_{H}(w))$. By Lemmas~\ref{lem:comultip-prime-words}~and~\ref{lem:comultip-words}, there exist words $\nu \in \Y$ and scalars $k_{\nu} \in \K$ satisfying $z = \pi\left(\sum k_{\nu} \nu\right)$ with $D(\nu_{L}) <_{\lOr} D(\zeta)$. By applying our induction hypothesis (since $D(\nu_{L}) <_{\lOr} D(\zeta)$), the claim follows.

\noindent
\textit{Case 1.2}: Suppose $d_{1} \ge 1$. Let us define the restricted target subspace 
\begin{align*}
V := \sum_{\substack{\beta\alpha \in W \\ D(\beta) = D(\zeta)}} \sum_{\substack{D(\beta') = D(\beta) \\ \deg(\beta'') = 0 \\ \deg(\alpha'') = 0}}  \K \pi(\beta''\alpha'').
\end{align*}

\noindent
\textit{Case 1.2.1}: If $\K z \cap V \neq 0$, an argument analogous to Case 1.1.1 applies directly.

\noindent
\textit{Case 1.2.2}: If $\K z \cap V = 0$, the sum $\K z + V$ is direct. We express the space as $H = \K z \oplus V \oplus T$ for a choice of complement subspace $T \subseteq H$. Define a linear functional $\varphi: H \rightarrow \K$ by setting:
\begin{align*}
\varphi(z) = 1, \quad \varphi(V) = 0, \text{ and } \varphi(T) = 0.
\end{align*}
Applying $\id \otimes \varphi$ to equation~\eqref{formula:mainthm4_1}, we obtain a non-zero element $v \in M$, words $\tau \in \Y$, and scalars $k_{\beta'\alpha'}', k_{\tau}' \in \K$ such that
\begin{align}\label{formula:mainthm4_11}
v =  \pi \left( \sum_{\substack{\beta\alpha \in W \\ D(\beta) = D(\zeta)}} \sum_{\substack{D(\beta')= D(\beta) \\ \deg(\beta'')=0 \\ \deg(\alpha'')>0}} k_{\beta'\alpha'}' \beta' \alpha' + \sum k_{\tau}' \tau \right),
\end{align}
where $D(\tau_{L}) <_{\lOr} D(\zeta)$, and $\deg(\alpha') < d_{1}$ (since $\deg(\alpha'') > 0$).

Since $M$ is a simple left $C^{*}$-module, it follows from Lemmas~\ref{lem:comultip-prime-words}~and~\ref{lem:comultip-words} that there exist words $\eta, f, \gamma \in \Y$ and scalars $k_{\eta f}'', k_{\gamma}'' \in \K$ such that
\begin{align*}
z = \pi\left(\sum k_{\eta f}'' \eta f + \sum k_{\gamma}'' \gamma \right), 
\end{align*}
where $D(\eta) = D(\zeta)$, $D(m_{f}) <_{O} D(m_{\zeta})$, $D(\gamma_{L}) <_{\lOr} D(\zeta)$, and $\deg(f) < d_{1}$.

Let us denote $d_{2} := \max\{\deg(f) \mid z = \pi\left(\sum k_{\eta f}'' \eta f + \sum k_{\gamma}'' \gamma \right) \}$. Note that $d_{2} < d_{1}$ holds strictly. Iterating this reduction process down the degree filtration yields elements $q \in \Yo$, $\theta, \xi \in \Y$, and scalars $k_{\theta q}''', k_{\xi}''' \in \K$ such that
\begin{align*}
z = \pi\left(\sum k_{\theta q}''' \theta q + \sum k_{\xi}''' \xi \right),
\end{align*}
where $D(\theta) = D (\zeta)$, $D(m_{q}) <_{O} D(m_{\zeta})$, and $D(\xi_{L}) <_{\lOr} D(\zeta)$.
The remaining verification follows precisely the same path as outlined in Case 1.1.

\medskip
\noindent
\textit{Case 2}: Suppose $\epsilon_{H}(z) = 1$ with $z \neq 1$. Let $r := \mathrm{rank}(\Delta_{H}(z)) - 1$. By tensor rank and counitality considerations, there exist elements $z_{i}', z_{i}'' \in M \cap \ker \epsilon_H$ such that
\begin{align*}
\Delta_{H}(z) = z \otimes z + \sum_{i=1}^{r} z_{i}' \otimes z_{i}''.
\end{align*}
The proof concludes by applying the exact reduction machinery of Case 1 to $z$ and adapting the sub-case analysis where necessary.
\end{proof}

\begin{remark}
The above result generalizes Zhuang's result that the coradical of an affine pointed Hopf algebra is affine (\cite[Corollary 3.5]{Z2013}).
\end{remark}

By combining Theorems~\ref{thm:main-theorem2} and \ref{thm:main-theorem3}, we deduce that a left or right Noetherian Hopf algebra is affine if and only if its Hopf coradical is affine. More generally, we establish the following equivalence criteria:

\begin{theorem}\label{thm:main-theorem4}
Let $H$ be a left or right Noetherian Hopf algebra over $\K$. Then the following conditions are equivalent: 
\begin{itemize}
\item [$\mathrm{(a)}$] $H$ is affine.
\item [$\mathrm{(b)}$] The Hopf coradical $H_{[0]}$ is affine.
\item [$\mathrm{(c)}$] The coradical $H_{(0)}$ is contained in an affine Hopf subalgebra of $H$.
\end{itemize}
In this case, any Hopf subalgebra of $H$ containing $H_{(0)}$ is also affine.
\end{theorem}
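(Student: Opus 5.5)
The plan is to prove the cycle (a)$\Rightarrow$(b)$\Rightarrow$(c)$\Rightarrow$(a) and then deduce the final assertion from (a) together with Theorem~\ref{thm:main-theorem2} applied in the form of Remark~\ref{rmk:more-result}.

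Throughout we use that $H$ is left or right Noetherian. By Lemma~\ref{lem:Sk2006} the antipode $S_{H}$ is injective. Hence $S_{H}$ restricted to the coradical $H_{(0)}$ is an injective coalgebra anti-map. It therefore maps each simple subcoalgebra to a simple subcoalgebra, so $S_{H}(H_{(0)}) \subseteq H_{(0)}$ and consequently $S_{H}(H_{[0]}) \subseteq H_{[0]}$. Thus $H_{[0]}$ is a Hopf subalgebra, and the hypotheses of Theorems~\ref{thm:main-theorem2} and \ref{thm:main-theorem3} are in force.

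\emph{(a)$\Rightarrow$(b).} This is exactly Theorem~\ref{thm:main-theorem3}: $H$ is affine and $S_{H}(H_{[0]}) \subseteq H_{[0]}$, so $H_{[0]}$ is affine.

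\emph{(b)$\Rightarrow$(c).} We take the Hopf subalgebra $H_{[0]}$ itself. It contains $H_{(0)}$ by definition and is affine by (b).

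\emph{(c)$\Rightarrow$(a).} Let $K$ be an affine Hopf subalgebra with $H_{(0)} \subseteq K$. By Remark~\ref{rmk:more-result}, there is a finite set $Q \subseteq \ker\epsilon_{H}$ with $Q \cup G_{K}$ generating $H$, where $G_{K}$ is a finite generating set of $K$. Hence $H$ is affine. Alternatively, one can argue directly: $H_{[0]}$ is the subalgebra generated by $H_{(0)} \subseteq K$, so $H_{[0]} \subseteq K$. Then one reruns the argument of Theorem~\ref{thm:main-theorem2}(b), with $O_{0}$ chosen from a basis of $\ker \epsilon_{K}$ in place of $\ker\epsilon_{H_{[0]}}$. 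Lemma~\ref{lem:O_I-finite} makes $O_{I,+}$ finite, and Theorem~\ref{thm:rl-sum-lower-irrls} shows that $\pi(O_{I,+})\cup G_{K}$ generates $H$.

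For the final assertion, let $K'$ be any Hopf subalgebra containing $H_{(0)}$. The natural route is to show that $K'$ is itself left or right Noetherian and then apply the equivalence to $K'$. Indeed, $K'_{(0)} = H_{(0)}$, since the coradical of a subcoalgebra is its intersection with $H_{(0)}$. Moreover $K'_{[0]} = H_{[0]}$, which is affine by (b). So Theorem~\ref{thm:main-theorem2}(b), applied to $K'$, would give that $K'$ is affine.

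The main obstacle is the Noetherianity of $K'$. Noetherianity does not pass to subalgebras in general. I would try faithful flatness of $H$ over Hopf subalgebras containing the coradical: Takeuchi/Masuoka-type results state that $H$ is faithfully (co)flat over a Hopf subalgebra containing $H_{(0)}$, at least when the antipode is bijective, which is a delicate point. Faithful flatness would give $IH \cap K' = I$ for every left ideal $I$ of $K'$, so ascending chains in $K'$ lift to chains in $H$ and stabilize.

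If this fails, the fallback is Corollary~\ref{cor:Hopfsubalg-affine-1}: ascending chains of Hopf subalgebras containing $H_{(0)}$ stabilize. Write $K'$ as the directed union of the Hopf subalgebras generated by $H_{[0]}$ and finitely many finite-dimensional subcoalgebras of $K'$. Each of these is affine, since $H_{[0]}$ is affine and the adjoined subcoalgebras are finite-dimensional. This family has no infinite strictly ascending chain, so it has a maximal element. By directedness that maximal element equals $K'$, which is therefore affine. I expect this second argument to be the cleaner one to write down.
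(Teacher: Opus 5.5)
Your cycle (a)$\Rightarrow$(b)$\Rightarrow$(c)$\Rightarrow$(a) is exactly the paper's. Lemma~\ref{lem:Sk2006} makes $H_{[0]}$ a Hopf subalgebra, (a)$\Rightarrow$(b) is Theorem~\ref{thm:main-theorem3}, (b)$\Rightarrow$(c) is trivial, and (c)$\Rightarrow$(a) is Remark~\ref{rmk:more-result}.

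The gap is in the final assertion, and specifically in the fallback argument you say you would write down. You claim that the Hopf subalgebra $K'_D$ generated by $H_{[0]}$ and a finite-dimensional subcoalgebra $D\subseteq K'$ is affine ``since $H_{[0]}$ is affine and $D$ is finite-dimensional.'' But $K'_D$ must be $S_{H}$-stable. As an algebra it is therefore generated by $H_{[0]}\cup\bigcup_{n\ge 0}S_{H}^{n}(D)$, and nothing you say makes finitely many of the $S_{H}^{n}(D)$ suffice; that would need, for instance, $S_{H}$ of finite order on $D$. Since $H_{[0]}$ is affine, your claim amounts to local affineness of $K'$. The paper treats local affineness as a genuine extra hypothesis (Lemma~\ref{lem:GZ2017}, Remark~\ref{rmk:conditions-examples}), not something automatic.

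You also cannot repair this with Corollary~\ref{cor:Hopfsubalg-affine-1}. The algebras generated by $H_{[0]}\cup D\cup\cdots\cup S_{H}^{n}(D)$ are only subbialgebras, so the ascending chain condition on Hopf subalgebras does not apply to them. Used as in your directed-union argument, that corollary shows only that $K'$ is generated \emph{as a Hopf algebra} by $H_{[0]}$ and one finite-dimensional subcoalgebra. That is strictly weaker than being affine.

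What is actually needed is your first route: one-sided Noetherianity of $K'$. Once you have it, $K'_{(0)}=K'\cap H_{(0)}=H_{(0)}$, so $K'_{[0]}=H_{[0]}$, which is affine by (b). Theorem~\ref{thm:main-theorem2}(b) applied to $K'$ then finishes. You leave exactly this step open.

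The paper does not supply it from scratch either. It deduces the final assertion from Corollary~\ref{cor:Hopfsubalg-affine-1}, which combines Theorem~\ref{thm:main-theorem2} with the external input \cite[Lemma 6.4]{JZ2025-2}. This is the same lemma the paper invokes in Theorem~\ref{thm:when-Hopfalg-is-affine} to conclude that $H_{[0]}$ is a Noetherian Hopf subalgebra. So you must either cite that result or prove a descent-of-Noetherianity (for example, faithful flatness) statement for Hopf subalgebras containing $H_{(0)}$.

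One small correction in that argument: for a left ideal $L$ of $K'$ the identity you want is $HL\cap K'=L$. This uses faithful flatness of $H$ as a right $K'$-module. Note that $LH$ is a right ideal, not a left ideal.
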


\begin{proof}
By Lemma~\ref{lem:Sk2006} $H_{[0]}$ is a Hopf subalgebra of $H$.
The implication (a) $\Rightarrow$ (b) follows from Theorem~\ref{thm:main-theorem3}. The implication (b) $\Rightarrow$ (c) holds trivially. The implication (c) $\Rightarrow$ (a) is guaranteed by Remark~\ref{rmk:more-result} and Theorem~\ref{thm:main-theorem2}. Finally, the assertion that any Hopf subalgebra containing $H_{(0)}$ is affine follows directly from the stabilization of ascending chains provided by Corollary~\ref{cor:Hopfsubalg-affine-1}.
\end{proof}

Combining the above theorem with Molnar's results \cite{Mo1975}, we obtain the following sufficient conditions for a Noetherian Hopf algebra being affine.

\begin{corollary}\label{cor:when-Hopfalg-is-affine-6.7}
A left or right Noetherian Hopf algebra $H$ over $\K$ is affine provided that one of the following conditions holds:
\begin{itemize}
\item [(a)] $H_{(0)}$ is cocommutative;
\item[(b)] $H_{[0]}$ is commutative. 
\item [(c)] $H_{(0)}$ is finite-dimensional.
\end{itemize}
\end{corollary}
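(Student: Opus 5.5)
Each of the three conditions will be reduced to Theorem~\ref{thm:main-theorem4}, specifically to the implication (b)~$\Rightarrow$~(a): a left or right Noetherian $H$ is affine as soon as $H_{[0]}$ is affine. By Lemma~\ref{lem:Sk2006}, $H_{[0]}$ is a Hopf subalgebra of $H$. The point is that in each case $H_{[0]}$ is either commutative or cocommutative (or finitely generated for elementary reasons), and it inherits a Noetherian property from $H$. Molnar's theorem \cite{Mo1975} then gives affineness of $H_{[0]}$, since a commutative or cocommutative Noetherian Hopf algebra is affine.

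\emph{Noetherianity of $H_{[0]}$.} This is the step I expect to be the main obstacle. Noetherianity does not pass to arbitrary subalgebras. However, a Noetherian Hopf algebra is faithfully flat over Hopf subalgebras in the commutative or cocommutative setting, or at least over normal ones. I would first try the standard fact that if $K \subseteq H$ is a Hopf subalgebra and $H$ is faithfully flat as a left $K$-module, then ascending chains of left ideals $J$ of $K$ embed via $J \mapsto HJ$, using $HJ \cap K = J$. So $K$ is left Noetherian whenever $H$ is. For cocommutative $K$ (case (a), where $H_{[0]}$ is generated by a cocommutative subcoalgebra and is therefore cocommutative) and for commutative $K$ (case (b)), faithful flatness of $H$ over $K$ is known in the literature (Takeuchi for commutative $K$; Masuoka and Takeuchi-type results for cocommutative $K$ with bijective antipode). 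Should this route fail, I would instead bypass Noetherianity and use Corollary~\ref{cor:Hopfsubalg-affine-1}. Write $H_{[0]}$ as the union of the Hopf subalgebras generated by finitely many simple subcoalgebras together with $H_{(0)}$... but these need not contain $H_{(0)}$. So the flatness route is the primary one, and the ascending chain condition on Hopf subalgebras is the fallback: in the commutative or cocommutative case, Molnar's argument itself only uses an ascending chain condition on Hopf subalgebras.

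\emph{The three cases.} (a) If $H_{(0)}$ is cocommutative, then $H_{[0]}$, being generated as an algebra by the cocommutative subcoalgebra $H_{(0)}$, is cocommutative. Once it is Noetherian, Molnar's theorem makes it affine. (b) If $H_{[0]}$ is commutative, the same argument applies using Molnar's commutative result. (c) If $H_{(0)}$ is finite-dimensional, no Noetherian input is needed: a basis of $H_{(0)}$ is a finite generating set of $H_{[0]}$, so $H_{[0]}$ is affine directly.

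In all three cases Theorem~\ref{thm:main-theorem4}, implication (b)~$\Rightarrow$~(a), yields that $H$ is affine.
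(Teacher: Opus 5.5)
Your route is the paper's: the corollary comes from feeding Molnar's theorem \cite{Mo1975} into the implication (b)$\Rightarrow$(a) of Theorem~\ref{thm:main-theorem4}. Case (c) is handled directly, since a basis of the finite-dimensional $H_{(0)}$ generates $H_{[0]}$. Your observation that $H_{[0]}$ is cocommutative when $H_{(0)}$ is, is also correct.

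You diverge only on the Noetherianity of $H_{[0]}$, which Molnar's theorem needs. The paper's one-line argument takes this for granted. Where the paper does make it explicit, in the proof of Theorem~\ref{thm:when-Hopfalg-is-affine}, it obtains it from Lemma~\ref{lem:Sk2006} together with \cite[Lemma 6.4]{JZ2025-2}. That argument is uniform, with no commutativity or cocommutativity assumption on $H_{[0]}$. The paper applies this lemma, as in Corollary~\ref{cor:Hopfsubalg-affine-1}, to Hopf subalgebras containing $H_{(0)}$, and $H_{[0]}$ is one.

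Your case-by-case justification does not hold up as written. For case (b) you cite Takeuchi, the result the paper quotes in Remark~\ref{rmk:conditions-examples}(2)(i). That theorem assumes the ambient Hopf algebra $H$ is commutative or cocommutative, not the subalgebra. It therefore does not give faithful flatness of a noncommutative $H$ over a commutative $H_{[0]}$. Your cocommutative case is only gestured at.

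There is also a side error. To get $HJ\cap K=J$ for a left ideal $J$ of $K$, you need $H$ faithfully flat as a right $K$-module, so that $H\otimes_K K/J\cong H/HJ$, not as a left one.

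Replace your flatness paragraph with the appeal to $H_{(0)}\subseteq H_{[0]}$ via \cite[Lemma 6.4]{JZ2025-2}. The fallback through Corollary~\ref{cor:Hopfsubalg-affine-1} then becomes unnecessary, and the argument coincides with the paper's.
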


We note that  Corollary \ref{cor:when-Hopfalg-is-affine-6.7} immediately implies that any left or right Noetherian pointed or copointed Hopf algebra is affine; for comprehensive treatments of the pointed framework, see \cite{GZ2017, JZ2025-2}.

\section{Noetherian Hopf Algebras with a Locally Affine Hopf Coradical}\label{sec:7}

By virtue of Theorem~\ref{thm:main-theorem4}, completing the verification of the affineness question reduces to showing that the Hopf coradical of a left or right Noetherian Hopf algebra is affine, or equivalently, that any Noetherian Hopf algebra generated as an algebra by its coradical is affine. Motivated by this connection, we investigate the structural properties governing the affineness of these Hopf algebras.

In the case where the Hopf coradical is locally affine and faithfully flat over all its Hopf subalgebras, it follows that the Hopf coradical is affine, which in turn implies that the entire Hopf algebra is affine. This scenario occurs, for example, if the coradical of the Hopf algebra $H$ forms a subalgebra; that is, if $H$ satisfies the dual Chevalley property.

Recall that a Hopf algebra $H$ is called \textit{locally affine} if each finite subset of $H$ is contained in an affine Hopf subalgebra of $H$. We record the following important result due to Goodearl and Zhang:

\begin{lemma}\label{lem:GZ2017}\cite{GZ2017}
Let $H$ be a left or right Noetherian Hopf algebra over $\K$. Suppose that $H$ is locally affine and faithfully flat over all its Hopf subalgebras. Then $H$ is affine. Consequently, a left or right Noetherian pointed Hopf algebra is affine.
\end{lemma}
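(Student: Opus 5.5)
The statement is the Goodearl--Zhang lemma, which the paper cites from \cite{GZ2017} rather than proves. My plan is to reconstruct the Goodearl--Zhang argument and then derive the pointed consequence from it.

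\emph{Step 1: a strictly increasing chain.} Assume $H$ is left Noetherian; the right Noetherian case follows by passing to $H^{\mathrm{op},\mathrm{cop}}$, since this preserves Noetherianity on the other side, affineness, local affineness and flatness over Hopf subalgebras. Suppose for contradiction that $H$ is not affine. Local affineness lets me build a chain of affine Hopf subalgebras $K_{1} \subsetneq K_{2} \subsetneq \cdots$. At each stage, since $K_{n}$ is affine and hence $K_{n} \neq H$, I pick $h_{n} \in H \setminus K_{n}$. I then take $K_{n+1}$ to be an affine Hopf subalgebra containing a finite generating set of $K_{n}$ together with $h_{n}$.

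\emph{Step 2: transfer the chain to left ideals of $H$.} For each $n$ consider the left ideal $H K_{n}^{+}$ of $H$, where $K_{n}^{+}=K_{n}\cap\ker\epsilon_H$. These left ideals form an ascending chain. The key input is the faithfully flat correspondence (Takeuchi/Masuoka): if $H$ is faithfully flat over a Hopf subalgebra $K$, then
\[
K=\{h\in H \mid \textstyle\sum h_{(1)}\otimes \overline{h_{(2)}} = h\otimes \overline{1} \text{ in } H\otimes H/HK^{+}\}.
\]
So $K$ is recovered from $HK^{+}$ as the coinvariants of the quotient coalgebra and left module $H/HK^{+}$. In particular $K\mapsto HK^{+}$ is injective on Hopf subalgebras over which $H$ is faithfully flat. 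Hence $K_{n}\subsetneq K_{n+1}$ forces $HK_{n}^{+}\subsetneq HK_{n+1}^{+}$, giving an infinite strictly ascending chain of left ideals. This contradicts left Noetherianity, so $H$ is affine.

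\emph{Step 3: the pointed consequence.} Here I would follow the paper's remark that, by Brown \cite{Br2026}, the restrictions on $\K$ can be dropped. For a pointed Hopf algebra, every Hopf subalgebra containing the coradical, and more generally any Hopf subalgebra, is faithfully flat: this is Radford's freeness over Hopf subalgebras for pointed Hopf algebras (and Takeuchi's result when the subalgebra contains the grouplikes). For local affineness, a finite subset lies in a finite-dimensional subcoalgebra $C$. The subalgebra generated by $C$ and $S^{i}(C)$ for all $i$ is a Hopf subalgebra. To show it is affine, I would use the injectivity of $S$ from Lemma~\ref{lem:Sk2006} together with the pointed structure, where $S$ is bijective since grouplikes are invertible and the coradical filtration is preserved. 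I expect $S^{2}$ to act locally finitely on $C$-generated pieces, so that finitely many $S^{i}(C)$ suffice.

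\emph{Main obstacle.} The hard points are the flatness input and local affineness in Step 3. Specifically, I need to show that the Hopf subalgebra generated by a finite-dimensional subcoalgebra of a pointed Hopf algebra is affine; equivalently, that $\sum_i S^{i}(C)$ is finite-dimensional. I would try to prove this by checking that the span of the $S^{2i}(C)$ stays inside a finite-dimensional piece of the coradical filtration indexed by finitely many grouplikes, using that $S^{2}$ is conjugation by a grouplike on skew-primitives at the associated graded level.
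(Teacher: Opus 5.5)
Your Steps 1 and 2 correctly reconstruct the Goodearl--Zhang argument, which the paper cites rather than proves. The flatness half of Step 3, Radford's freeness of pointed Hopf algebras over Hopf subalgebras, is exactly what Remark~\ref{rmk:conditions-examples} invokes.

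One technical adjustment is needed in Step 2. The standard proof of your displayed formula (right coinvariants for the left ideal $HK^{+}$) inverts $x\otimes_K y\mapsto x_{(1)}y\otimes\overline{x_{(2)}}$ using $S^{-1}$, but for Noetherian $H$ you only know that $S$ is injective. Use left coinvariants instead. The map $a\otimes_K b\mapsto\overline{a_{(1)}}\otimes a_{(2)}b$ is a bijection $H\otimes_K H\to H/HK^{+}\otimes H$, with inverse $\bar z\otimes x\mapsto z_{(1)}\otimes_K S(z_{(2)})x$. Hence $\overline{h_{(1)}}\otimes h_{(2)}=\bar 1\otimes h$ if and only if $h\otimes_K 1=1\otimes_K h$, and by faithfully flat descent (flatness on either side suffices) this holds if and only if $h\in K$. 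Since $K_{n+1}$ is contained in the left coinvariants for $H/HK_{n+1}^{+}$, the equality $HK_n^{+}=HK_{n+1}^{+}$ still forces $K_{n+1}\subseteq K_n$.

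\textbf{The gap: local affineness in Step 3.} You reduce local affineness of pointed Hopf algebras to showing that $\sum_i S^{i}(C)$ is finite-dimensional, and you call this equivalent to affineness of the Hopf subalgebra generated by $C$. It is neither true in general nor equivalent: $S^{2}$ need not act locally finitely, and a skew-primitive space can be infinite-dimensional.

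Here is a counterexample. Let $G=\langle g\rangle\cong\mathbb{Z}$ and $V=\bigoplus_{i\in\mathbb{Z}}\K x_i\in{}^{\K G}_{\K G}\mathcal{YD}$ with $g\cdot x_i=x_{i+1}$ and $\delta(x_i)=g\otimes x_i$. In the pointed Hopf algebra $T(V)\sharp\K G$:
\begin{itemize}
\item every $x_i$ satisfies $\Delta(x_i)=x_i\otimes 1+g\otimes x_i$, so the skew-primitives attached to the grouplikes $1,g$ already span an infinite-dimensional space;
\item $S(x_0)=-g^{-1}x_0$ and $S^{2}(x_0)=g^{-1}x_0g=x_{-1}$, so $S^{2n}(x_0)=x_{-n}$ for all $n$.
\end{itemize}
For $C=\K1+\K g+\K x_0$ the span of the $S^{i}(C)$ is therefore infinite-dimensional. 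Yet the Hopf subalgebra generated by $C$ is generated by $g^{\pm1},x_0$ and is affine. Your sketch uses no hypothesis that could rule this out; Noetherianity enters only through the injectivity of $S$.

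\textbf{The fix.} The argument behind \cite[Corollary 3.4]{Z2013} and \cite[Lemma 4.8]{GZ2017}, which the paper cites for this point, does not iterate $S$.
\begin{itemize}
\item Let $A$ be the subalgebra generated by $C$ and the inverses of the finitely many grouplikes of $C$. It is a subbialgebra.
\item By Taft--Wilson, for $n\ge 1$ the term $C_{(n)}$ is spanned by elements $x$ with $\Delta(x)=g\otimes x+x\otimes h+w$, where $g,h\in G(C)$ and $w=\sum w'\otimes w''\in C_{(n-1)}\otimes C_{(n-1)}$.
\item The antipode axiom then gives $S(x)=g^{-1}\bigl(\epsilon(x)1-xh^{-1}-\sum w'S(w'')\bigr)$.
\item Induction on $n$ yields $S(C)\subseteq A$, hence $S(A)\subseteq A$, so $A$ is an affine Hopf subalgebra containing $C$.
\end{itemize}
This works over any field and does not need bijectivity of $S$.
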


\begin{remark}\label{rmk:conditions-examples}
The conditions required by Lemma~\ref{lem:GZ2017} are satisfied by several prominent classes of Hopf algebras. Specifically, let $H$ be a Hopf algebra over $\K$:
\begin{itemize}
    \item [$\mathrm{(1)}$] $H$ is locally affine whenever:
    \begin{itemize}
        \item [$\mathrm{(i)}$] $S_{H}$ has finite order (which holds, for example, if $H$ is commutative, cocommutative, or finite-dimensional); 
        \item [$\mathrm{(ii)}$] $H$ is pointed (see \cite[Corollary 3.4]{Z2013}; see also \cite[Lemma 4.8]{GZ2017}); or
        \item [$\mathrm{(iii)}$] $H$ is cosemisimple (see \cite[Theorem 3.3]{La1971}; see also \cite[Corollary 10.8.4]{Ra2012}).
    \end{itemize}
    \item [$\mathrm{(2)}$] $H$ is faithfully flat over all its Hopf subalgebras whenever:
    \begin{itemize}
        \item [$\mathrm{(i)}$] $H$ is commutative or cocommutative (see \cite[Theorem 3.1]{Ta1972}); 
        \item [$\mathrm{(ii)}$] $H$ is pointed or finite-dimensional (see \cite{Ra1977, NZ1989}; see also \cite[Theorem 9.3.1]{Ra2012}); or
        \item [$\mathrm{(iii)}$] $H$ is cosemisimple (see \cite[Theorem 2.1]{Ch2014}).
    \end{itemize}
\end{itemize}
\end{remark}

Utilizing Lemma~\ref{lem:GZ2017} and standard structural results, we deduce that any Hopf subalgebra of a Noetherian cosemisimple Hopf algebra inherits the property of being affine:

\begin{corollary}\label{cor:Noetherian-cosemisimple-Hopfalg-affine}
Let $H$ be a cosemisimple Hopf algebra over $\K$. If $H$ is left or right Noetherian, then any Hopf subalgebra $T$ of $H$ is both Noetherian and affine.
\end{corollary}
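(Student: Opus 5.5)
The plan is to combine Lemma~\ref{lem:GZ2017} with Remark~\ref{rmk:conditions-examples} and the Noetherian property, in three steps.

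\emph{Step 1: a cosemisimple $H$ is faithfully flat over $T$.} By Remark~\ref{rmk:conditions-examples}(2)(iii), $H$ is faithfully flat over every Hopf subalgebra, in particular over $T$.

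\emph{Step 2: $T$ inherits the Noetherian property.} Assume $H$ is left Noetherian; the right case follows by passing to $H^{\mathrm{op},\mathrm{cop}}$, as in the proof of Theorem~\ref{thm:main-theorem2}. Let $L_1\subseteq L_2\subseteq\cdots$ be an ascending chain of left ideals of $T$. Then $H L_n\cong H\otimes_T L_n$. For this I use the flatness of $H$ as a right $T$-module; if the cited flatness is one-sided on the wrong side, I would apply the bijectivity of the antipode of a cosemisimple Hopf algebra to switch sides. The chain $HL_n$ stabilizes in $H$. Faithful flatness gives $HL_n\cap T=L_n$, since $H\otimes_T(T/L_n)\neq 0$ detects nonvanishing. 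Hence the chain $L_n$ stabilizes, so $T$ is left Noetherian. Two-sidedness follows by the same argument on the other side, or by noting that $T$ is a Hopf algebra whose antipode is bijective.

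\emph{Step 3: $T$ is cosemisimple.} A subcoalgebra of a cosemisimple coalgebra is cosemisimple, since every subcoalgebra of a sum of simple subcoalgebras is again a sum of some of them. So $T$ is cosemisimple. By Remark~\ref{rmk:conditions-examples}(1)(iii), $T$ is locally affine. By Remark~\ref{rmk:conditions-examples}(2)(iii), $T$ is faithfully flat over all of its Hopf subalgebras. Since $T$ is Noetherian by Step 2, Lemma~\ref{lem:GZ2017} then gives that $T$ is affine.

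Alternatively, for $T=H$ one could argue directly: $H_{[0]}=H$, so affineness of $H$ is exactly the case of Lemma~\ref{lem:GZ2017}.

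The main obstacle is Step 2: getting the faithful-flatness side conventions right, that is, which side is flat, so that $HL_n\cap T=L_n$ holds. I would first try using that the antipode is bijective for cosemisimple $H$, which makes left and right faithful flatness equivalent. If needed, I would fall back on the fact that $H$ is a projective generator as a $T$-module in the cosemisimple setting, which is the content of Chirvasitu's theorem.
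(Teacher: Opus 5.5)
Your proposal is correct and follows essentially the same route as the paper. Both use Chirvasitu's theorem for faithful flatness, then faithfully flat descent of the Noetherian property (which the paper cites as Goodearl--Warfield Exercise 17T and you prove directly via $HL_n\cap T=L_n$). Both then use cosemisimplicity of $T$ together with Larson's theorem to get that $T$ is locally affine, and finish with Lemma~\ref{lem:GZ2017} applied to $T$.
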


\begin{proof}
Assume without loss of generality that $H$ is left Noetherian. By \cite[Theorem 2.1]{Ch2014}, $H$ is faithfully flat over $T$. It then follows from \cite[Exercise 17T]{GW2004} that $T$ is left Noetherian. Evidently, $T$ inherits cosemisimplicity from $H$. By \cite[Theorem 3.3]{La1971}, the antipode $S_{H}$ is bijective and satisfies $S_{H}^{2}(C) = C$ for any simple subcoalgebra $C$ of $T$. Hence, $T$ is Noetherian and locally affine. Since $T$ is faithfully flat over all its Hopf subalgebras, Lemma~\ref{lem:GZ2017} implies that $T$ is affine. An analogous argument applies if $H$ is right Noetherian.
\end{proof}

Combining Theorem~\ref{thm:main-theorem4} and Corollary~\ref{cor:Noetherian-cosemisimple-Hopfalg-affine}, we obtain the following structural criterion:

\begin{corollary}\label{cor:when-Hopfalg-is-affine-2}
A left or right Noetherian Hopf algebra $H$ over $\K$ is affine provided that $H_{(0)}$ is a subalgebra of $H$; that is, $H$ satisfies the dual Chevalley property.
\end{corollary}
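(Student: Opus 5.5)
The plan is to reduce the statement to Theorem~\ref{thm:main-theorem4} together with Corollary~\ref{cor:Noetherian-cosemisimple-Hopfalg-affine}. By Theorem~\ref{thm:main-theorem4}, it suffices to show that the Hopf coradical $H_{[0]}$ is affine. Under the dual Chevalley property, $H_{(0)}$ is a subalgebra. Hence $H_{[0]}$, the subalgebra generated by $H_{(0)}$, coincides with $H_{(0)}$. So the first step is to verify that $H_{(0)}$ is a Hopf subalgebra of $H$ which is cosemisimple.

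For the first step, $H_{(0)}$ is a subcoalgebra, being a sum of simple subcoalgebras, and it is a subalgebra by hypothesis. It remains to check stability under the antipode. By Lemma~\ref{lem:Sk2006}, $S_H$ is injective. The image $S_H(C)$ of a simple subcoalgebra $C$ is a subcoalgebra of $H^{\mathrm{cop}}$, hence a subcoalgebra of $H$. Injectivity makes it simple: a proper nonzero subcoalgebra of $S_H(C)$ would pull back to one of $C$. Therefore $S_H(H_{(0)}) \subseteq H_{(0)}$, and $H_{(0)} = H_{[0]}$ is a Hopf subalgebra. Its own coradical is itself, so it is a cosemisimple Hopf algebra.

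The main obstacle is that Corollary~\ref{cor:Noetherian-cosemisimple-Hopfalg-affine} requires the cosemisimple Hopf algebra itself to be Noetherian, whereas we only know $H$ is left (or right) Noetherian. I would first try to transfer the Noetherian property from $H$ to $K := H_{(0)}$ via faithful flatness. The tool would be the generalized lifting structure: by Lemma~\ref{lem:gr[c]H-Radford-biproduct}, $\gr_{\s} H \cong B \sharp K$, which is free as a $K$-module. Combined with the filtration, this should give that $H$ is faithfully flat (indeed projective/free) over $K$. The argument used in the proof of Corollary~\ref{cor:Noetherian-cosemisimple-Hopfalg-affine}, namely \cite[Exercise 17T]{GW2004}, then yields that $K$ is left Noetherian. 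If that route becomes technical, the fallback is to cite known results that a Hopf algebra is faithfully flat over a cosemisimple Hopf subalgebra, or over the coradical when the latter is a Hopf subalgebra (cf. \cite{Ch2014}).

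With $K$ Noetherian and cosemisimple, Corollary~\ref{cor:Noetherian-cosemisimple-Hopfalg-affine} (applied with $T = K$ inside itself) gives that $K = H_{[0]}$ is affine. Theorem~\ref{thm:main-theorem4}, implication (b)$\Rightarrow$(a), then shows that $H$ is affine.
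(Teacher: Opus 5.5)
Your proposal is correct and follows the paper's route: identify $H_{(0)}=H_{[0]}$ as a cosemisimple Hopf subalgebra (via Lemma~\ref{lem:Sk2006}), obtain its affineness from Corollary~\ref{cor:Noetherian-cosemisimple-Hopfalg-affine}, and conclude with Theorem~\ref{thm:main-theorem4}. The paper leaves the Noetherianity of $H_{(0)}$ implicit here (in the proof of Theorem~\ref{thm:when-Hopfalg-is-affine} it cites \cite[Lemma 6.4]{JZ2025-2} for the analogous fact about $H_{[0]}$), and your primary argument supplies it correctly: since $(\gr_{\s}H)_n=B_nH_{[0]}$, each $H_{[n]}/H_{[n-1]}$ is a free right $H_{(0)}$-module, so the filtration splits and $H$ is free with $H_{(0)}$ as a direct summand (pass to $H^{\mathrm{op},\mathrm{cop}}$ for the right Noetherian case); drop the fallback, because \cite{Ch2014} requires the ambient Hopf algebra to be cosemisimple and so does not apply.
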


Combining Theorem~\ref{thm:main-theorem4} and Lemma~\ref{lem:GZ2017}, we obtain general sufficient conditions for the affineness of Noetherian Hopf algebras based on their first filtration step:

\begin{theorem}\label{thm:when-Hopfalg-is-affine}
Let $H$ be a left or right Noetherian Hopf algebra over $\K$. Suppose that the Hopf coradical $H_{[0]}$ is locally affine and faithfully flat over all its Hopf subalgebras. Then $H_{[0]}$ is affine, and hence $H$ is affine.
\end{theorem}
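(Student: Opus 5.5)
The plan is to apply Lemma~\ref{lem:GZ2017} to the Hopf algebra $H_{[0]}$ itself, and then pass back to $H$ through Theorem~\ref{thm:main-theorem4}. The only hypothesis of Lemma~\ref{lem:GZ2017} that is not given is that $H_{[0]}$ is left or right Noetherian. So the heart of the argument is to transfer the Noetherian property from $H$ down to $H_{[0]}$.

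We may assume $H$ is left Noetherian; the right Noetherian case follows by the same argument applied to $H^{\mathrm{op},\mathrm{cop}}$. By Lemma~\ref{lem:Sk2006}, the antipode $S_H$ is injective. Hence $S_H(H_{(0)})\subseteq H_{(0)}$, since $S_H$ maps simple subcoalgebras to (anti-)simple subcoalgebras. It follows that $H_{[0]}$ is a Hopf subalgebra of $H$.

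To show that $H_{[0]}$ is left Noetherian, I would use faithful flatness of $H$ over $H_{[0]}$. The cleanest way to get it is as follows.
\begin{itemize}
\item By a theorem of Takeuchi type, a Hopf algebra is faithfully flat over any Hopf subalgebra that contains its coradical. This is also recorded, for example, in \cite{Ra2012}, in the form \emph{$H$ is a free (hence faithfully flat) module over a Hopf subalgebra containing $H_{(0)}$ when $S_H$ is injective}.
\item With faithful flatness in hand, I would argue exactly as in the proof of Corollary~\ref{cor:Noetherian-cosemisimple-Hopfalg-affine}. For an ascending chain $J_1\subseteq J_2\subseteq\cdots$ of left ideals of $H_{[0]}$, the chain $HJ_n\cong H\otimes_{H_{[0]}}J_n$ stabilizes in $H$.
\item Faithful flatness on the appropriate side gives $HJ_n\cap H_{[0]}=J_n$, so the original chain stabilizes; compare \cite[Exercise 17T]{GW2004}.
\end{itemize}
This is the main obstacle. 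One must check that the faithful flatness result really applies on the correct side: for left ideals one needs $H$ faithfully flat as a right $H_{[0]}$-module. If only the other side is available, I would pass to $H^{\mathrm{op},\mathrm{cop}}$, or use the bijectivity-free version available for subalgebras containing the coradical.

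Once $H_{[0]}$ is known to be left (or right) Noetherian, Lemma~\ref{lem:GZ2017} applies to $H_{[0]}$ with the assumed local affineness and faithful flatness over all its Hopf subalgebras, so $H_{[0]}$ is affine. Finally, Theorem~\ref{thm:main-theorem4}, implication (b)$\Rightarrow$(a), or directly Theorem~\ref{thm:main-theorem2}(b), shows that $H$ is affine.
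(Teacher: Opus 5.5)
Your proof is essentially the paper's: Lemma~\ref{lem:Sk2006} makes $H_{[0]}$ a Hopf subalgebra, Noetherianity is transferred from $H$ to $H_{[0]}$, and then Lemma~\ref{lem:GZ2017} followed by Theorem~\ref{thm:main-theorem4} finishes. For the transfer, the paper just cites \cite[Lemma 6.4]{JZ2025-2}, while you derive it from faithful flatness of $H$ over a Hopf subalgebra containing $H_{(0)}$ together with $HJ\cap H_{[0]}=J$.

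The side issue you flag is harmless, because $H$ is actually free over $H_{[0]}$ on both sides. Indeed, $\gr_{\s}H\cong B\sharp H_{[0]}$ (Lemma~\ref{lem:gr[c]H-Radford-biproduct}) is free as a left and as a right $H_{[0]}$-module without any bijectivity of the antipode, and freeness lifts along the exhaustive standard filtration. This is the cleanest way to pin down the faithful-flatness input, which you currently leave to a loosely attributed reference.
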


\begin{proof}
By invoking Lemma~\ref{lem:Sk2006} and \cite[Lemma 6.4]{JZ2025-2}, the Hopf coradical $H_{[0]}$ forms a left or right Noetherian Hopf subalgebra of $H$. Lemma~\ref{lem:GZ2017} then guarantees that $H_{[0]}$ is affine. It follows immediately from Theorem~\ref{thm:main-theorem4} that the entire Hopf algebra $H$ is affine.
\end{proof}

\bibliographystyle{plain}

\end{document}